\theoremstyle{plain}
\newtheorem{theorem}{Theorem}[section]
\newtheorem{proposition}[theorem]{Proposition}
\newtheorem{lemma}[theorem]{Lemma}
\theoremstyle{definition}
\newtheorem{definition}[theorem]{Definition}
\newtheorem{example}[theorem]{Example}
\theoremstyle{remark}
\newcommand{\trace}{\ensuremath{tr}}
\long\def\comment#1{}
\renewcommand{\trace}{\ensuremath{\operatorname{tr}}}
\newcommand{\Pcal}{\ensuremath{\mathcal{P}}}
\newcommand{\Pbb}{\ensuremath{\mathbb{P}}}
\newcommand{\vecnorm}[2]{\left\| #1\right\|_{#2}}
\newcommand{\enorm}[1]{\vecnorm{#1}{2}} 
\newcommand{\norm}[1]{\left\|#1\right\|}
\newcommand{\modelname}{\ensuremath{\text{multivariate deviated model}}}
\DeclareMathOperator*{\argmax}{arg\,max}
\title{Minimax Optimal Rate for Parameter Estimation in Multivariate Deviated Models}
\author{%
  Dat Do*\\
  Department of Statistics\\
  University of Michigan at Ann Arbor\\
  Ann Arbor, MI 48109 \\
  \texttt{dodat@umich.edu} \\
   \And
   Huy Nguyen* \\
   Department of Statistics and Data Sciences \\
   The University of Texas at Austin \\
   Austin, TX 78712 \\
   \texttt{huynm@utexas.edu} \\
   \AND
   Khai Nguyen \\
   Department of Statistics and Data Sciences \\
   The University of Texas at Austin \\
   Austin, TX 78712 \\
   \texttt{khainb@utexas.edu} \\
   \And
   Nhat Ho \\
   Department of Statistics and Data Sciences \\
   The University of Texas at Austin \\
   Austin, TX 78712 \\
   \texttt{minhnhat@utexas.edu}\\
}
\begin{document}

\maketitle

\begin{abstract}
  We study the maximum likelihood estimation (MLE) in the multivariate deviated model where the data are generated from the density function $(1-\lambda^{\ast})h_{0}(x)+\lambda^{\ast}f(x|\mu^{\ast}, \Sigma^{\ast})$ in which $h_{0}$ is a known function, $\lambda^{\ast} \in [0,1]$ and $(\mu^{\ast}, \Sigma^{\ast})$ are unknown parameters to estimate. The main challenges in deriving the convergence rate of the MLE mainly come from two issues: (1) The interaction between the function $h_{0}$ and the density function $f$; (2) The deviated proportion $\lambda^{\ast}$ can go to the extreme points of $[0,1]$ as the sample size tends to infinity. To address these challenges, we develop the \emph{distinguishability condition} to capture the linear independent relation between the function $h_{0}$ and the density function $f$. We then provide comprehensive convergence rates of the MLE via the vanishing rate of $\lambda^{\ast}$ to zero as well as the distinguishability of two functions $h_{0}$ and $f$.
\end{abstract}
{\let\thefootnote\relax\footnotetext{$^{*}$ Equal contribution.
}}

\section{Introduction}
\label{Section:introduction}
The goodness-of-fit test \cite{cochran1952chi2} is one of the foundational tools in statistics with several applications in data-driven scientific fields, namely kernel Stein discrepancy \cite{liu2016ksd,schrab2022ksd}, point processes \cite{yang2019stein} and Bayesian statistics \cite{talts2018bayes}, etc. Given a sample set of data and a pre-specified distribution with density function $h_0$, the test indicates whether the samples are reasonably distributed according to $h_0$ (\emph{null hypothesis}) or to another family of distributions $\{p(\cdot |\theta): \theta\in \Theta\}$ (\emph{alternative hypothesis}). It is worth noting that knowledge about the null hypothesis distribution can come from prior knowledge of scientists. A key to understanding the statistical efficiency of testing is via the likelihood ratio and the maximum likelihood estimation (MLE) methods. \cite{casella2021statistical}. 

While traditional testing problems often assume the null distribution $h_0 = p(\cdot | \theta_0)$ and the alternative one $p(\cdot | \theta)$ are from a single simple family of distributions such as exponential families, there are also many problems in science require to test $h_0$ against the alternative $f(\cdot|\theta)$ that can be \emph{deviated} from $h_0$ by a distribution from a potentially different family. Specifically, in this paper, we consider the family of distributions named \emph{\modelname} with density functions defined as follows:
\begin{align}\label{eqn:deviated_model}
p_{G}(x) := (1-\lambda)h_{0}(x)+\lambda f(x|\mu, \Sigma),
\end{align} 
where $x\in \mathbb{R}^{d}$, $G : = (\lambda, \mu, \Sigma)$ are the model's parameters with $\lambda \in [0,1]$ being the \emph{deviated proportion} (from $h_0$) and $(\mu, \Sigma) \in \Theta\times \Omega$ are parameters of a vector-matrix family of distributions $f$, where $\Theta \subset \mathbb{R}^{d}$ and $\Omega \subset \mathbb{R}^{d \times d}$ being compact. When $\lambda = 0$, this recovers the null hypothesis distribution $h_0$. 

The deviated model can be motivated by many applications in science. For instance, in microarray data analysis, it can be used to detect differentially expressed genes under two or more conditions \cite{bordes2006aos, bordes2006semiparametric}, where $h_0$ is the uniform distribution and $f(\cdot | \mu, \Sigma)$ is required to estimate. Many other applications can be seen in many contamination problems in astronomy and biology \cite{Patra2016-lv}. Besides, the deviated model can also be viewed as a low-rank adaptation model in the domain adaptation problem~\cite{hu2021lora}, where $h_0$ is a pre-trained model on large data, and $f$ is a simpler component to be estimated from the smaller data domain. Our goal in this paper is to study the parameter estimation rate of the deviated model.

\textbf{Problem setup.} Suppose that we observe $n$ i.i.d. samples $X_1, \dots, X_n$ from the true multivariate deviated model:
\begin{align} 
p_{G_{*}}(x) := (1-\lambda^{*})h_{0}(x)+\lambda^{*}f(x|\mu^{*}, \Sigma^{*}), \label{eqn:true_model}
\end{align} 
where $G_{*}:=(\lambda^{*}, \mu^{*}, \Sigma^{*})$ are true but unknown parameters with $\lambda^{*} \neq 0$. Throughout the paper, we allow $G_{*}$ to change with the sample size $n$ (see Appendix~\ref{subsec:discussion} for a discussion). To facilitate our presentation, we suppress the dependence of $G_{*}$ on $n$, and then estimate $G_{*}$ from the data. The main focus of this paper is to establish both a uniform convergence rate and minimax rate for parameter estimation via the MLE approach, which is given by:
\begin{align}
    \label{eqn:MLE}
    \widehat{G}_n \in \argmax_{G \in \Xi} \sum_{i=1}^{n} \log p_{G}(X_i),
\end{align}
where $\widehat{G}_{n} := (\widehat{\lambda}_{n}, \widehat{\mu}_{n}, \widehat{\Sigma}_{n})$ and $\Xi : = [0,1] \times \Theta \times \Omega$.

\textbf{Contribution.} There are two main challenges in studying the convergence rate of the MLE $\widehat{G}_n$: (1) The interaction between the function $h_{0}$ and the density function $f$, e.g., $h_{0}$ belongs to the family of $f$ and $(\mu^{*}, \Sigma^{*})$ approaches $h_0$ as the sample size $n$ goes to infinity; (2) The deviated proportion $\lambda^{*}$ can go to the extreme points of $[0,1]$ as the sample size goes to infinity and make the estimation become more challenging, because when $\lambda^* =0$, all the parameters $(\mu^*, \Sigma^*)$ yield the same model. To address these \emph{singularity} and \emph{identifiability} issues, we first develop the \emph{distinguishability condition} to capture the linear independent relation between the function $h_{0}$ and the density function $f$. We then study the optimal convergence rate of parameters under both distinguishable and non-distinguishable settings of the \modelname. Our theoretical results can be summarized as follows:

\textbf{1. Distinguishable settings:} We demonstrate that as long as the function $h_{0}$  and the density function $f$ are distinguishable, the convergence rate of $\widehat{\lambda}_{n}$ to $\lambda^{*}$ is $\mathcal{O}(n^{-1/2})$ while $(\widehat{\mu}_{n}, \widehat{\Sigma}_{n})$ converges to $(\mu^{*}, \Sigma^{*})$ at a rate determined by the vanishing rate of $\lambda^{*}$ as follows:
\begin{align*}
    \lambda^{*}\|(\widehat{\mu}_{n},\widehat{\Sigma}_{n})-(\mu^{*},\Sigma^{*})\| = \mathcal{O}(n^{-1/2}).
\end{align*}
It indicates that if $\lambda^{*}$ goes to 0, the convergence rate of estimating $(\mu^{*},\Sigma^{*})$ is slower than the parametric rate. 

\textbf{2. Non-distinguishable settings:} When $h_{0}$ and $f$ are not distinguishable, it becomes complicated to capture the convergence rate of the MLE. To shed light on the behaviors of the MLE under the non-distinguishable settings of \modelname, we specifically study the settings when $h_0$ belongs to the same family as $f$, namely, $h_{0}(.) = f(.|\mu_{0}, \Sigma_{0})$ for some $(\mu_{0}, \Sigma_{0})$. To precisely characterize the rates of the MLE under this setting, we consider the second-order strong identifiability of $f$, which requires the linear independence up to second-order derivatives of $f$ with respect to its parameters. The second-order identifiability had also been considered in the literature to investigate the convergence rate of parameter estimation in finite mixtures~\cite{Chen1992, Nguyen-13, Ho-Nguyen-EJS-16, Ho-Nguyen-Ann-16, Ho-Nguyen-Ann-17, heinrich2018strong}. 

\textbf{2.1. Strongly identifiable and non-distinguishable settings:} When $f$ is strongly identifiable in the second order, we demonstrate that $\|(\Delta \mu^{*},\Delta \Sigma^{*}) \|^{2}|\widehat{\lambda}_{n}-\lambda^{*}| = \mathcal{O}(n^{-1/2})$ and
\begin{align*}
    \lambda^{*} (\|(\Delta \mu^{*},\Delta \Sigma^{*})\| + \|(\Delta \widehat{\mu}_{n},\Delta \widehat{\Sigma}_{n}\|) \|(\widehat{\mu}_{n},\widehat{\Sigma}_{n})-(\mu^{*},\Sigma^{*})\| = \mathcal{O}(n^{-1/2}),
\end{align*}
where $\Delta \mu := \mu - \mu_{0}$ and $\Delta \Sigma := \Sigma - \Sigma_{0}$. It indicates that the convergence rate of $\widehat{\lambda}_{n}$ to $\lambda^{*}$ depends on that of $(\mu^{*}, \Sigma^{*})$ to $(\mu_{0}, \Sigma_{0})$ while the convergence rate of $(\widehat{\mu}_{n}, \widehat{\Sigma}_{n})$ to $(\mu^{*}, \Sigma^{*})$ depends on both the rate of $\lambda^{*}$ to 0 and the rate of $(\mu^{*}, \Sigma^{*})$ to $(\mu_{0}, \Sigma_{0})$. These results are strictly different from those in the distinguishable settings, which is mainly due to the non-distinguishability between $h_{0}$ and $f$. 

\textbf{2.2. Weakly identifiable and non-distinguishable settings:} When $f$ is weakly identifiable, i.e., it is not strongly identifiable in the second order, we specifically consider the popular setting  when $f$ is the density of a multivariate Gaussian distribution. The loss of the strong identifiability of the Gaussian distribution is due to the following partial differential equation (PDE) between the location and scale parameters (the heat equation):
\begin{align*}
    \dfrac{\partial^{2}{f}}{\partial{\mu}\partial\mu^{\top}}(x|\mu,\Sigma)=2\dfrac{\partial{f}}{\partial{\Sigma}}(x|\mu,\Sigma).
\end{align*}
Due to the above PDE, the convergence rate of the MLE under this setting exhibits very different behaviors from those under the strongly identifiable setting. In particular, we prove that $\left[\|\Delta \mu^{*}\|^{4}+\|\Delta \Sigma^{*}\|^{2} \right]|\widehat{\lambda}_{n}-\lambda^{*}| = \mathcal{O}(n^{-1/2})$ and
\begin{align*}
    \lambda^{*}(\|\Delta \mu^{*}\|^{2}+ \|\Delta \widehat{\mu}_{n}\|^{2} + \|\Delta \Sigma^{*}\| + \|\Delta \widehat{\Sigma}_{n}\|) (\|\widehat{\mu}_{n}-\mu^{*}\|^{2}+\|\widehat{\Sigma}_{n}- \Sigma^{*}\|) = \mathcal{O}(n^{-1/2}).
\end{align*}
Notably, there is a mismatch in the orders of convergence rates of the location vector and covariance matrix. Furthermore, the rate of the deviated mixing proportion also depends on different orders of $\mu^{*}$ to $\mu_{0}$ and $\Sigma^{*}$ to $\Sigma_{0}$. Such rich behaviors of the MLE are mainly due to the PDE between the location and scale parameters.

\noindent
\textbf{Comparing to moment methods.}  We would like to remark that the results for the MLE under the non-distinguishable settings in the paper are (much) tighter than those obtained from moment methods for a general mixture of two components in the literature. In particular, when $f$ is multivariate Gaussian distribution with fixed covariance matrix, i.e., $f$ is strongly identifiable in the second order and we do not estimate $\Sigma^{*}$, an application of the results with moment methods from~\cite{wu2020a} to the deviated models leads to $\|\Delta \mu^{*}\|^{3} |\lambda_{n}^{\text{moment}}-\lambda^{*}| = \mathcal{O}(n^{-1/2})$ and $\lambda^{*} \|\mu_{n}^{\text{moment}} - \mu^{*}\|^3 = \mathcal{O}(n^{-1/2})$, which are much slower compared to the results for the MLE in the strongly identifiable and non-distinguishable settings, where $(\lambda_{n}^{\text{moment}}, \mu_{n}^{\text{moment}})$ denote moment estimators of $\lambda^{*}$ and $\mu^{*}$.  

When $f$ is a multivariate Gaussian density and we estimate both the location vector and covariance matrix, i.e., $f$ is weakly identifiable, an adaptation of the moment estimators from the seminal work~\cite{Hardt_mixture} to the multivariate deviated models shows that $\lambda^{*}\| \|\tilde{\mu}_{n}^{\text{moment}} - \mu^{*}\|^6 = \mathcal{O}(n^{-1/2})$, $\lambda^{*}\| \|\tilde{\Sigma}_{n}^{\text{moment}} - \Sigma^{*}\|^3 = \mathcal{O}(n^{-1/2})$ and $(\|\Delta \mu^{*}\|^{6} + \|\Delta \Sigma^{*}\|^3) |\tilde{\lambda}_{n}^{\text{moment}}-\lambda^{*}| = \mathcal{O}(n^{-1/2})$, where $(\tilde{\lambda}_{n}^{\text{moment}}, \tilde{\mu}_{n}^{\text{moment}}, \tilde{\Sigma}_{n}^{\text{moment}})$ are moment estimators of $(\lambda^{*}, \mu^{*}, \Sigma^{*})$. These results are also much slower than those of the MLE in weakly identifiable settings. 

\textbf{Other related work.} The hypothesis testing and MLE problem related to the multivariate deviated model had been considered in previous work, including the problem of detecting sparse homogeneous and heteroscedastic mixtures~\cite{do2022beyond, Jin_homogeneous, cai2007estimation, Jeng_detection, Wu_detection, Verzelen_feature}, the problem of determining the number of components~\cite{Chen_modified, Pengfei_number_components, Chen-2012, Kasahara_nonparametric, Kasahara_number_components}, and the problem of multiple testing ~\cite{Patra_estimation, Deb_two_component}. In particular, \cite{cai2007estimation} considers testing problem for the deviated model with $h_0 = N(0, 1)$ and $f = N(\mu^*, 1)$ being one-dimensional Gaussian distributions. They show that no test can reliably detect $\lambda^* = 0$ against $\lambda^* > 0$ if $\lambda^* \mu^* = o(n^{-1/2})$, while the Likelihood Ratio test can consistently do it when $\lambda^* \mu^* \gtrsim n^{-1/2 + \epsilon}$ for any $\epsilon > 0$. However, no guarantee for estimation of $\lambda^*$ and $\mu^*$ is provided. In the same setting where $f$ is the density of a location Gaussian distribution, the convergence rate of parameter estimation in the deviated model had been studied in the work of~\cite{gadat2020parameter}. Since the location Gaussian distribution is a special case of the strongly identifiable distribution, our result in the strongly identifiable and non-distinguishable settings is a generalization of the results in~\cite{gadat2020parameter}, but with a different proof technique as their proof technique relies strictly on the properties of the location Gaussian distribution. 

\textbf{Organization.} The paper is organized as follows. In Section~\ref{Section:preliminary}, we provide background on the identifiability and density estimation rate of the multivariate deviated model. Then, we establish the lower bounds of the Total Variation distance between two densities in terms of loss functions among parameters under both the distinguishable and non-distinguishable settings in Section~\ref{sec:inverse_bounds}. Next, we characterize the convergence rates of parameter estimation as well as derive the corresponding minimax lower bounds in Section~\ref{Section:minimax_bounds}. In Section~\ref{Section:experiments-main}, we carry out a simulation study to empirically verify our theoretical results before concluding the paper in Section~\ref{sec:conclusion}. Rigorous proofs and additional results are deferred to the supplementary material.

\textbf{Notations.} For any $a,b\in\mathbb{R}$, we denote $a \vee b:=\max \left\{a,b\right\}$ and $a \wedge b:=\min \left\{a,b\right\}$. Next, we say that $h_0$ is identical to $f$ if $h_0(x) = f(x|\mu_0, \Sigma_0)$ for some $(\mu_0, \Sigma_0)\in \Theta \times \Sigma$. For each parameter $G \in \Xi$, let $\mathbb{E}_{p_{G}}$ be the expectation taken with respect to product measure with density $p_{G}$. Lastly, for any two density functions $p$ and $q$ (with respect to the Lebesgue measure $m$), the Total Variation distance between them is given by $V(p,q):=\frac{1}{2}\int|p(x)-q(x)|dm(x)$, while we define their squared Hellinger distance as $h^2(p,q):=\frac{1}{2}\int[\sqrt{p(x)}-\sqrt{q(x)}]^2dm(x)$. 
\section{Preliminaries}
\label{Section:preliminary}
\subsection{Identifiability Condition}
Our principal goal in this paper is to assess the statistical efficiency of parameter estimation from the MLE method. To do that, we should be able to guarantee the parameter identifiability of the deviated model~\eqref{eqn:true_model}, i.e., if $p_{G}(x)=p_{G_{*}}(x)$ for almost surely $x \in \mathcal{X}$ where $G=(\lambda, \mu, \Sigma)$, then $G \equiv G_{*}$. That identifiability condition leads to the following notion of distinguishability between the density function $h_{0}(\cdot)$ and the family of density functions $\{f(\cdot| \mu, \Sigma):  (\mu,\Sigma) \in \Theta\times \Omega\}$.
\begin{definition}[\bf{Distinguishability}] 
\label{definition:distinguishable} 
We say that the family of density functions $\{f(\cdot| \mu, \Sigma), (\mu, \Sigma) \in \Theta\times \Omega\}$ (or in short, $f$) is \emph{distinguishable} from $h_{0}$ if the following holds:
\begin{itemize}
\item[A1.] For any two distinct components $(\mu_{1}, \Sigma_1)$ and $(\mu_{2}, \Sigma_2)$, if we have real coefficients $\eta_{i}$ for $1 \leq i \leq 3$ such that $\eta_{1} \eta_{2} \leq 0$ and 
$\eta_{1}f (x|\mu_{1}, \Sigma_1) + \eta_{2}f(x|\mu_{2}, \Sigma_2) + \eta_{3} h_{0}(x) = 0$,
for almost surely $x \in \mathbb{R}^{d}$, then $\eta_{1} = \eta_{2}= \eta_{3} = 0$.
\end{itemize}
\end{definition}
We can verify that as long as $f$ is distinguishable from $h_{0}$, the parameter identifiability of our multivariate deviated model follows. In particular, assume that there exists $G = (\lambda, \mu, \Sigma)$ such that
\begin{align}
(1-\lambda^{*}) h_{0}(x) + \lambda^{*} f(x| \mu^{*}, \Sigma^{*}) 
= (1-\lambda) h_{0}(x) +\lambda f(x| \mu, \Sigma),\label{eq:identifiable_equation}
\end{align}  
for almost surely $x \in \mathcal{X}$. The above equation is equivalent to $(\lambda-\lambda^*) h_{0}(x) + \lambda^* f(x| \mu^{*}, \Sigma^{*}) - \lambda f(x| \mu, \Sigma) = 0$. Assume that $f$ is distinguishable from $h_{0}$, then  equation~\eqref{eq:identifiable_equation} indicates that if $(\mu, \Sigma) \neq (\mu^*, \Sigma^*)$, we have $\lambda = \lambda^* = 0$. Since $\lambda^* \neq 0$ from our assumption, we obtain that $(\mu, \Sigma) = (\mu^*, \Sigma^*)$. As a result, equation~\eqref{eq:identifiable_equation} becomes $(\lambda -\lambda^*)h_0(x)+(\lambda^*-\lambda)f(x|\mu,\Sigma)=0$. By applying the distinguishability condition again, we get $\lambda = \lambda^*$. Therefore, the \modelname~\eqref{eqn:true_model} is identifiable. 
 
In the following example, we will verify the distinguishability condition in Definition~\ref{definition:distinguishable} given some specific choices of function $h_{0}$ and density $f$. 
\begin{example} \label{example:distinguishable_condition} 
(a)  Assume that $f$ belongs to a location family of density functions, i.e., $f(x| \mu, \Sigma) = f_{\Sigma}(x - \mu)$ for all $x$ where $\Sigma$ is a fixed covariance matrix. If $h_{0}(x) \neq f(x)$ for almost surely $x \in \mathcal{X}$, then $f$ is distinguishable from $h_{0}$. \\
(b) When $h_{0}$ is a finite mixture of multivariate Gaussian densities and $f$ belongs to a class of multivariate Student's density functions with any fixed odd degree of freedom $\nu > 1$, we get that $f$ is distinguishable from $h_{0}$. \\
(c) When $f$ is identical to $h_{0}$, then $f$ is not distinguishable from $h_{0}$.
\end{example}
\subsection{Convergence Rate of Density Estimation}
Our strategy to obtain the convergence rate of the MLE $\widehat{G}_n$ is by first establishing the convergence rate of density $p_{\widehat{G}_n}$ and then studying the geometric inequalities between the parameter space and density space. For the former, the standard method is to use the empirical process theory~\cite{gine2021mathematical, geer2000empirical}, while for the latter step, we investigate those inequalities under various settings of distinguishability in Section~\ref{sec:inverse_bounds}. Due to space constraints and the popularity of empirical process theory, we choose to informally present a main result for yielding the parametric convergence rate for density estimation in this section. For full explanation and definition, readers are referred to Appendix~\ref{Section:density}. The convergence rate for density estimation can be characterized by bounding the complexity of the parameter space $\Xi$ via a function called \emph{bracketing entropy integral} $\mathcal{J}_B(\epsilon, \overline{\mathcal{P}}^{1/2}(\Xi,\epsilon))$ (cf. equation~\eqref{eq:bracketing_integral}).
\begin{theorem}
\label{thm:density_estimation_rate}
Assume the following assumption holds:
\begin{enumerate}   
\item[A2.]
Given a universal constant $J > 0$,
there exists  $N > 0$, possibly
depending on $\Xi$, such that
for all $n \geq N$ and all $\epsilon > (\log (n)/n)^{1/2}$,
we have $\mathcal{J}_{B}(\epsilon, \overline{\mathcal{P}}^{1/2}(\Xi,\epsilon)) \leq J \sqrt n \epsilon^2.$
\end{enumerate}
Then, there exists a constant $C > 0$ depending
only on $\Xi$ such that for all $n \geq 1$, 
$$
\sup_{G_{*} \in \Xi} \mathbb{E}_{p_{G_*}} h(p_{\widehat{G}_n}, p_{G_*})  
\leq  C\sqrt{\log n/n}.$$
\end{theorem}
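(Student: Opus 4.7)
The plan is to invoke the standard bracketing-entropy based theory for MLE convergence developed by Wong--Shen and van de Geer, specialized to our parametric class $\overline{\mathcal{P}}(\Xi) = \{p_G : G \in \Xi\}$. The argument decomposes into three stages: (i) a basic inequality that converts optimality of $\widehat G_n$ into an empirical-process statement involving the squared Hellinger distance; (ii) a uniform bound on that empirical process via the bracketing entropy integral $\mathcal{J}_B$; and (iii) integration of the resulting tail bound.

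First, I would start from the defining inequality $\sum_i \log p_{\widehat{G}_n}(X_i) \geq \sum_i \log p_{G_*}(X_i)$ and, following the now-standard convexification trick, consider the midpoint density $\overline{p} := (p_{\widehat G_n} + p_{G_*})/2$ so that $\sum_i \log[\overline p(X_i)/p_{G_*}(X_i)] \geq 0$. Using $\log(1+x) \leq 2(\sqrt{1+x}-1)$ and the inequality $h^2(\overline p, p_{G_*}) \geq \tfrac{1}{16} h^2(p_{\widehat G_n}, p_{G_*})$, this yields
\begin{equation*}
h^2(p_{\widehat G_n}, p_{G_*}) \;\lesssim\; \sup_{G \in \Xi} \left( \mathbb{P}_n - \mathbb{P}_{G_*}\right)\!\Bigl[\sqrt{\tfrac{p_G + p_{G_*}}{2 p_{G_*}}}-1\Bigr],
\end{equation*}
modulo standard centering steps. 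The class on the right is indexed by $G \in \Xi$ and has $L^2(p_{G_*})$-envelope controlled by $h(p_G, p_{G_*})$.

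Second, I would apply van de Geer's maximal inequality (Theorem 5.11 in her monograph) to the above empirical process via a peeling argument: partition $\Xi$ into shells $\{G : 2^{j}\epsilon < h(p_G, p_{G_*}) \leq 2^{j+1}\epsilon\}$, bound the empirical process on each shell by $\mathcal{J}_B(2^{j+1}\epsilon, \overline{\mathcal{P}}^{1/2}(\Xi, 2^{j+1}\epsilon))/\sqrt n$, and use assumption A2 to control the sum. This identifies the critical radius $\epsilon_n$ as the smallest $\epsilon \gtrsim \sqrt{\log n / n}$ for which $\mathcal{J}_B(\epsilon, \overline{\mathcal{P}}^{1/2}(\Xi,\epsilon)) \leq J\sqrt n \epsilon^2$ holds. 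Exponentiating yields the concentration bound
\begin{equation*}
\mathbb{P}_{p_{G_*}}\!\bigl(h(p_{\widehat G_n}, p_{G_*}) > t\bigr) \;\leq\; c_1 \exp\!\bigl(-c_2 n t^2\bigr) \qquad \text{for all } t \geq \sqrt{\log n/n},
\end{equation*}
uniformly in $G_* \in \Xi$.

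Finally, integrating this tail via $\mathbb{E} h(p_{\widehat G_n}, p_{G_*}) \leq \sqrt{\log n/n} + \int_{\sqrt{\log n/n}}^\infty \mathbb{P}(h > t)\,dt$ gives the announced bound $C\sqrt{\log n/n}$. The main technical obstacle is the peeling/maximal-inequality step, since one must verify that the bracketing envelope of each shell behaves like its Hellinger radius; however, this is essentially built into the use of $\overline{\mathcal{P}}^{1/2}(\Xi,\epsilon)$ in A2, so the bulk of the proof amounts to carefully citing and plugging into the empirical-process machinery rather than developing new estimates.
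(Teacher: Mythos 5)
Your proposal is correct and follows essentially the same route as the paper's proof: the same reduction via the midpoint density $\overline{p}_{\widehat G_n}$ and the $h^2(\overline p, p_{G_*}) \geq \tfrac{1}{16} h^2(p_{\widehat G_n}, p_{G_*})$ inequality, the same peeling into Hellinger shells controlled through van de Geer's Theorem~5.11 and assumption~A2, and the same integration of the resulting sub-Gaussian tail to obtain the $\sqrt{\log n/n}$ rate. The only cosmetic difference is that the paper bounds each shell's tail probability directly via Theorem~5.11 (with the entropy integral entering its hypothesis) rather than phrasing the shell bound as $\mathcal{J}_B/\sqrt n$, but this is the same estimate.
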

Therefore, in order to get the convergence rate for density estimators based on the MLE method, we only need to check Assumption A2, which holds true for several parametric models~\cite{geer2000empirical}. For our model, we give an example that it holds for a general class of $f$ and $h_0$.
\begin{proposition}
\label{prop:entropy-number-calculation}
    Suppose that both $\Theta$ and  $\Omega$ are compact, and $\{f(x|\mu, \Sigma): \mu\in \Theta, \Sigma\in \Omega\}$ is a vector-matrix family of densities being uniformly bounded, Lipschitz, and light tail, i.e. there exists constants $M, L, B, b_1, b_2, b_3>0$ such that $|f(x|\mu, \Sigma)| \leq M, |f(x|\mu, \Sigma)-f(x|\mu', \Sigma')|\leq L(\norm{\mu-\mu'} + \norm{\Sigma-\Sigma'})$ for all $x\in \mathbb{R}^{d}$, and
    $$|f(x|\mu, \Sigma)| \leq b_1\exp(-b_2 \norm{x}^{b_3}) \quad \forall \, \norm{x} > B,  $$
    for all $(\mu, \Sigma) \in \Theta \times \Omega$. Additionally, if the density $h_0$ is bounded, then the corresponding multivariate deviated model defined in equation~\eqref{eqn:deviated_model} satisfies assumption A2.
\end{proposition}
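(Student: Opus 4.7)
The plan is to verify Assumption A2 by bounding the Hellinger bracketing entropy of the full density class $\mathcal{P}(\Xi)=\{p_{G}:G\in\Xi\}$ at the parametric rate $H_{B}(\tau,\mathcal{P}(\Xi),h)\lesssim D\log(1/\tau)$, where $D\leq 1+d+d^{2}$, and then plugging this into the standard entropy integral. Since $\overline{\mathcal{P}}^{1/2}(\Xi,\epsilon)$ is at most the square-root class associated with $\mathcal{P}(\Xi)$, the same upper bound transfers, and the final check reduces to showing $\epsilon\sqrt{\log(1/\epsilon)}\lesssim\sqrt{n}\,\epsilon^{2}$ for $\epsilon>\sqrt{\log n/n}$.

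The first key ingredient is a separation of the $h_{0}$ and $f$ contributions. Decomposing
\[
p_{G}(x)-p_{G_{j}}(x) = (\lambda_{j}-\lambda)\,h_{0}(x) + \lambda\,f(x|\mu,\Sigma) - \lambda_{j}\,f(x|\mu_{j},\Sigma_{j})
\]
and add-and-subtracting in the last two terms gives $|p_{G}(x)-p_{G_{j}}(x)|\leq \delta\,h_{0}(x)+\delta\,F(x)+\min(L\delta,\,2F(x))$ whenever $\|G-G_{j}\|\leq\delta$, where $F(x):=\sup_{G\in\Xi}f(x|\mu,\Sigma)$ is dominated by $M$ on $\{\|x\|\leq B\}$ and by $b_{1}\exp(-b_{2}\|x\|^{b_{3}})$ outside, hence is integrable with light tail. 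The $h_{0}$ and $F$ contributions to the $L^{1}$ width of the bracket already integrate to $\delta$ and $\delta\|F\|_{1}$ respectively without any tail truncation; this is important because the hypothesis on $h_{0}$ only guarantees boundedness, not tail decay. Only the near-constant term $\min(L\delta,\,2F(x))$ will require spatial truncation.

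Given a $\delta$-net $\{G_{j}\}_{j=1}^{N}$ of $\Xi$ with $N\leq(C_{1}/\delta)^{D}$, I form brackets $\ell_{j}=\max(p_{G_{j}}-A_{j},\,0)$ and $u_{j}=p_{G_{j}}+A_{j}$ with
\[
A_{j}(x) = \delta\,h_{0}(x) + \delta\,F(x) + L\delta\,\mathbf{1}_{\{\|x\|\leq R\}} + 2b_{1}\exp(-b_{2}\|x\|^{b_{3}})\,\mathbf{1}_{\{\|x\|>R\}}.
\]
Choosing $R\asymp(\log(1/\tau))^{1/b_{3}}$ drives $\int_{\|x\|>R}F\,dm$ below $\tau^{2}/16$, and $\delta\asymp\tau^{2}/(L R^{d})$ forces $L\delta\,\mathrm{vol}(B_{R})\leq\tau^{2}/16$, so $\|u_{j}-\ell_{j}\|_{1}\leq\tau^{2}$. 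The standard inequality $\|\sqrt{u_{j}}-\sqrt{\ell_{j}}\|_{2}^{2}\leq\|u_{j}-\ell_{j}\|_{1}$ upgrades this to a Hellinger bracket of width $\tau$, yielding $H_{B}(\tau,\mathcal{P}(\Xi),h)\leq 2D\log(1/\tau)+\mathcal{O}(\log\log(1/\tau))$. Plugging into the entropy integral gives $\mathcal{J}_{B}(\epsilon,\overline{\mathcal{P}}^{1/2}(\Xi,\epsilon))\leq C_{2}\,\epsilon\sqrt{\log(1/\epsilon)}$, and the hypothesis $\epsilon>\sqrt{\log n/n}$ gives $\log(1/\epsilon)\leq\log n\leq n\epsilon^{2}$, hence $\mathcal{J}_{B}\leq C_{2}\sqrt{n}\,\epsilon^{2}$, verifying A2.

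The main obstacle I anticipate is the trade-off between the truncation radius $R$ and the net resolution $\delta$: a naive pointwise Lipschitz bracket has infinite $L^{1}$ width because the constant $L$ does not integrate over $\mathbb{R}^{d}$, so one must truncate using the light-tail bound on $f$ and confirm that the resulting $(\log(1/\tau))^{d/b_{3}}$ overhead from $\mathrm{vol}(B_{R})$ contributes only a benign $\log\log$ correction that does not destroy the parametric scaling. A secondary check is that restricting to the Hellinger ball of radius $\epsilon$ that defines $\overline{\mathcal{P}}^{1/2}(\Xi,\epsilon)$ only shrinks the class, so the bracket count for the full $\mathcal{P}(\Xi)$ remains a valid upper bound when passing to $\overline{\mathcal{P}}^{1/2}(\Xi,\epsilon)$.
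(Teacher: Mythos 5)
Your proof is correct, and it differs from the paper's in a way worth noting. The paper first establishes a sup-norm covering bound $\log N(\epsilon,\mathcal{P}(\Xi),\|\cdot\|_\infty)\lesssim\log(1/\epsilon)$, then builds brackets of the form $p_i^L=(1-\lambda_i)h_0+\lambda_i\max\{f_i-\eta,0\}$, $p_i^U=(1-\lambda_i)h_0+\lambda_i\min\{f_i+\eta,H\}$, so that the $(1-\lambda_i)h_0$ piece cancels when computing the $L^1$ bracket width and only the $f$-component requires truncation. You instead work directly from a $\delta$-net of the parameter space $\Xi$ and build a pointwise envelope $A_j$ for $|p_G-p_{G_j}|$ that explicitly tracks all three sources of variation: the $\delta h_0$ term from the $\lambda$-coefficient on $h_0$, the $\delta F$ term from the $\lambda$-coefficient on $f$, and the $\min(L\delta,2F)$ term from the Lipschitz deformation of $f$. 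This is a cleaner decomposition. In particular, your observation that the $\delta h_0$ and $\delta F$ contributions integrate to $O(\delta)$ without any spatial truncation — precisely because $h_0$ is only assumed bounded with no tail control — is the right way to handle the $\lambda$-dependence of the $h_0$ coefficient, a dependence the paper's frozen-$h_0$ brackets do not visibly account for. Both routes end with the same $\eta(\log(1/\eta))^{d/b_3}$ bracket radius, the same parametric bracketing entropy, and the same verification of A2 via $\epsilon\sqrt{\log(1/\epsilon)}\lesssim\sqrt{n}\epsilon^2$ on the range $\epsilon>\sqrt{\log n/n}$.
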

\begin{example}
    We can check that the location-scale Gaussian density $f(x|\mu, \Sigma)$ with $\Sigma\in \Omega$ having eigenvalues bounded below by a positive constant satisfies the condition of Proposition~\ref{prop:entropy-number-calculation}. This condition for $h_0$ is mild and is satisfied by most distributions such as Gaussian and t-distribution.
\end{example}
\section{From the Convergence Rate of Densities to Rate of Parameters}\label{sec:inverse_bounds}
The objective of this section is to develop a general theory according to which a small distance between $p_{G}$ and $p_{G_{*}}$ under the Hellinger distance (or Total Variation distance) would imply that $G$ and $G_{*}$ are also close under appropriate distance where $G=(\lambda,\mu,\Sigma)$ and $G_{*}=(\lambda^{*},\mu^{*},\Sigma^{*})$. By combining those results with Theorem~\ref{thm:density_estimation_rate}, we can obtain the convergence rate for parameter estimation (cf. Section~\ref{Section:minimax_bounds}). The distinguishability condition between $h_{0}$ and $f$ implicitly requires that $p_{G}=p_{G_{*}}$ would entail $G=G_{*}$; however, to obtain quantitative bounds for their Total Variation distance, we need stronger notions of both distinguishability and classical parameter identifiability, ones which involve higher order derivatives of the densities $h_{0}$ and $f$, taken with respect to mixture model parameters. Throughout the rest of this section, we denote $G=(\lambda,\mu,\Sigma)$ and $G_{*}=(\lambda^{*},\mu^{*},\Sigma^{*})$.
\subsection{Distinguishable Settings} \label{Section:strongly_distinguishable_settings}
\begin{definition}[\bf{First-order Distinguishability}] \label{definition:distinguishable_classes}
We say that $f$ is distinguishable from $h_{0}$ up to the first order if $f$ is differentiable in $(\mu,\Sigma)$, and the following holds: 
\begin{itemize} 
\item[D1.] For any component $(\mu',\Sigma') \in \Theta \times \Omega$, if we have real coefficients $\eta, \tau_{\alpha}$ for all $\alpha=(\alpha_{1},\alpha_{2})\in \mathbb{N}^{d_{1}}\times \mathbb{N}^{d_{2} \times d_{2}}$, $|\alpha|=|\alpha_{1}|+|\alpha_{2}| \leq 1$ such that
\begin{eqnarray}
\eta h_{0}(x)+\sum \limits_{|\alpha| \leq 1}{\tau_{\alpha}\dfrac{\partial^{|\alpha|}{f}}{\partial{\mu^{\alpha_{1}}}\partial{\Sigma^{\alpha_{2}}}}(x|\mu',\Sigma')}=0 \nonumber
\end{eqnarray}
for all $x \in \mathcal{X}$, then $\eta=\tau_{\alpha}=0$ for all $|\alpha| \leq 1$.
\end{itemize}
\end{definition}
We can verify that the examples from part (a) and part (b) of Example \ref{example:distinguishable_condition} satisfy the first-order distinguishability condition. Next, we introduce a notion of uniform Lipschitz condition in the following definition.
\begin{definition}[\bf{Uniform Lipschitz}]
We say that $f$ admits uniform Lipschitz condition up to the first order if the following holds: there are positive constants $\delta_1,\delta_2$ such that for any $R_1,R_2,R_3>0$, $\gamma_1\in\mathbb{R}^{d_1},\gamma_2\in\mathbb{R}^{d_2\times d_2}$, $R_1\leq{\lambda^{1/2}_{\min}(\Sigma_1)}\leq{\lambda^{1/2}_{\max}(\Sigma_2)}\leq R_2$, $\|\mu_1\|, \|\mu_2\|\leq R_3$, $\mu_1,\mu_2\in\Theta$,$\Sigma_1,\Sigma_2\in\Omega$, we can find positive constants $C(R_1, R_2)$ and $C(R_3)$ such that for all $x\in\mathcal{X}$,
\begin{align*}
\left|\gamma_1^{\top}\left(\dfrac{\partial f}{\partial\mu}(x|\mu_1,\Sigma)-\dfrac{\partial f}{\partial \mu}(x|\mu_2,\Sigma)\right)\right| \leq C(R_1,R_2)\|\mu_1-\mu_2\|^{\delta_1}\|\gamma_1\|,\\
\left|\mathrm{tr}\left(\Big(\dfrac{\partial f}{\partial\Sigma}(x|\mu,\Sigma_1)-\dfrac{\partial f}{\partial\Sigma}(x|\mu,\Sigma_2)\Big)^{\top}\gamma_2\right)\right| \leq C(R_3)\|\Sigma_1-\Sigma_2\|^{\delta_2}\|\gamma_2\|.
\end{align*}
\end{definition}
Now, we have the following results characterizing the behavior of $V(p_{G}, p_{G_{*}})$ regarding the variation of $G$ and $G_{*}$.
\begin{theorem} \label{theorem:strong_identifiable_model_distinguishable}
Assume that $f$ is distinguishable from $h_{0}$ up to the first order. Furthermore, $f$ admits uniform Lipschitz condition up to the first order. For any $G$ and $G_{*}$, we define
\begin{eqnarray}
\mathcal{K}(G,G_{*}) := |\lambda-\lambda^{*}|+(\lambda+\lambda^{*})\|(\mu,\Sigma)-(\mu^{*},\Sigma^{*})\|. \nonumber
\end{eqnarray}
Then, the following holds:
\begin{eqnarray}
C.\mathcal{K}(G,G_{*}) \leq V(p_{G}, p_{G_{*}}) \leq C_{1}.\mathcal{K}(G,G_{*}), \nonumber
\end{eqnarray}
for all $G$ and $G_{*}$, where $C$ and $C_{1}$ are two positive constants depending only on $\Theta$, $\Omega$, and $h_0$.
\end{theorem}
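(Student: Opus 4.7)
The plan is to prove the upper and lower bounds separately; the upper bound is routine, while the lower bound is the substantive step. For the upper bound, write
\begin{align*}
p_G - p_{G_{*}} = (\lambda^{*}-\lambda)\bigl(h_0 - f(\cdot|\mu^{*},\Sigma^{*})\bigr) + \lambda\bigl(f(\cdot|\mu,\Sigma) - f(\cdot|\mu^{*},\Sigma^{*})\bigr),
\end{align*}
integrate absolute values, and combine the trivial bound $\int|h_0 - f(\cdot|\mu^{*},\Sigma^{*})|\, dm \le 2$ with a standard $L^1$-Lipschitz bound on the family $(\mu,\Sigma)\mapsto f(\cdot|\mu,\Sigma)$ (which follows by integrating the uniform Lipschitz bound on its first derivatives along a segment in $(\mu,\Sigma)$) to conclude $V(p_G, p_{G_{*}}) \le C_1\bigl(|\lambda - \lambda^{*}| + \lambda\|(\mu,\Sigma) - (\mu^{*},\Sigma^{*})\|\bigr) \le C_1\,\mathcal{K}(G, G_{*})$.

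For the lower bound, argue by contradiction: suppose sequences $G_n, G_n^{*}$ satisfy $V(p_{G_n}, p_{G_n^{*}})/\mathcal{K}(G_n, G_n^{*}) \to 0$. By compactness of $\Xi$, pass to a subsequence with $G_n \to \bar G$ and $G_n^{*} \to \bar G^{*}$, and split on whether $\|(\mu_n,\Sigma_n) - (\mu_n^{*},\Sigma_n^{*})\|$ stays bounded below by some $\epsilon > 0$ or tends to $0$. In the non-vanishing case, the coefficients $c_n^{(1)} := (\lambda_n^{*}-\lambda_n)/\mathcal{K}_n$, $c_n^{(2)} := \lambda_n/\mathcal{K}_n$, $c_n^{(3)} := \lambda_n^{*}/\mathcal{K}_n$ are bounded (since $\mathcal{K}_n \ge \epsilon(\lambda_n + \lambda_n^{*})$), so along a further subsequence the rescaled difference
\begin{align*}
D_n(x) := \frac{p_{G_n}(x) - p_{G_n^{*}}(x)}{\mathcal{K}_n} = c_n^{(1)} h_0(x) + c_n^{(2)} f(x|\mu_n,\Sigma_n) - c_n^{(3)} f(x|\mu_n^{*},\Sigma_n^{*})
\end{align*}
tends pointwise to $c^{(1)} h_0(x) + c^{(2)} f(x|\bar\mu,\bar\Sigma) - c^{(3)} f(x|\bar\mu^{*},\bar\Sigma^{*})$ with $(\bar\mu,\bar\Sigma) \neq (\bar\mu^{*},\bar\Sigma^{*})$. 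Fatou's lemma forces this limit to vanish a.e.; since the two $f$-coefficients $c^{(2)}$ and $-c^{(3)}$ have opposite (weak) signs, condition A1 of Definition~\ref{definition:distinguishable} kills all three limiting coefficients, contradicting the normalization $|c_n^{(1)}| + (c_n^{(2)}+c_n^{(3)})\|(\mu_n,\Sigma_n) - (\mu_n^{*},\Sigma_n^{*})\| = 1$.

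In the vanishing case, both parameter pairs converge to a common $(\mu_0,\Sigma_0)$, and I Taylor-expand
\begin{align*}
f(x|\mu_n,\Sigma_n) - f(x|\mu_n^{*},\Sigma_n^{*}) = (\mu_n - \mu_n^{*})^{\top} \tfrac{\partial f}{\partial \mu}(x|\mu_n^{*},\Sigma_n^{*}) + \mathrm{tr}\!\bigl((\Sigma_n - \Sigma_n^{*})\tfrac{\partial f}{\partial \Sigma}(x|\mu_n^{*},\Sigma_n^{*})\bigr) + R_n(x),
\end{align*}
controlling $|R_n(x)| \le g(x)\|(\mu_n,\Sigma_n) - (\mu_n^{*},\Sigma_n^{*})\|^{1+\delta}$ via the uniform Lipschitz condition with $\delta := \min(\delta_1,\delta_2) > 0$ and an integrable envelope $g$. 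After dividing by $\mathcal{K}_n$, the remainder contribution $\lambda_n R_n(x)/\mathcal{K}_n$ is dominated by $g(x)\|(\mu_n,\Sigma_n) - (\mu_n^{*},\Sigma_n^{*})\|^{\delta} \to 0$, and along a further subsequence $D_n$ converges a.e. to $\alpha\bigl(h_0 - f(\cdot|\mu_0,\Sigma_0)\bigr) + \tau_1^{\top}\tfrac{\partial f}{\partial\mu}(\cdot|\mu_0,\Sigma_0) + \mathrm{tr}\bigl(\tau_2 \tfrac{\partial f}{\partial\Sigma}(\cdot|\mu_0,\Sigma_0)\bigr)$. Fatou again forces this limit to vanish a.e., so the first-order distinguishability condition D1 of Definition~\ref{definition:distinguishable_classes} yields $\alpha = 0$, $\tau_1 = 0$, $\tau_2 = 0$, i.e.\ $|\lambda_n - \lambda_n^{*}|/\mathcal{K}_n \to 0$ and $\lambda_n\|(\mu_n,\Sigma_n) - (\mu_n^{*},\Sigma_n^{*})\|/\mathcal{K}_n \to 0$. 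A triangle-inequality step using the first fact promotes the second to the symmetric $(\lambda_n + \lambda_n^{*})\|(\mu_n,\Sigma_n) - (\mu_n^{*},\Sigma_n^{*})\|/\mathcal{K}_n \to 0$, which summed with $|\lambda_n - \lambda_n^{*}|/\mathcal{K}_n \to 0$ contradicts $\mathcal{K}_n/\mathcal{K}_n = 1$.

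The main obstacle is the vanishing case. Two technical points deserve care: (i) controlling the Taylor remainder when $\mu$ and $\Sigma$ move simultaneously, since the uniform Lipschitz hypothesis bounds perturbations in one argument at a time and requires a chained splitting; and (ii) handling the asymmetry that the expansion naturally produces $\lambda_n$ rather than $\lambda_n + \lambda_n^{*}$, which is why the final symmetrization step is indispensable for matching the stated scaling in $\mathcal{K}(G,G_{*})$.
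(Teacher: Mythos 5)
Your proof is correct and, at its core, follows the same route as the paper's: contradiction, Fatou's lemma, first-order Taylor expansion about $(\mu_n^{*},\Sigma_n^{*})$ with the uniform Lipschitz hypothesis controlling the remainder, and distinguishability forcing the limiting linear combination to vanish, which then contradicts the normalization $\mathcal{K}_n/\mathcal{K}_n = 1$. Two differences are worth noting. Organizationally, you carry out a single compactness-plus-case-split argument, whereas the paper first proves a local estimate (Proposition~\ref{proposition:strong_identifiable_model_distinguishable}, with both sequences confined to a shrinking $\mathcal{K}$-neighborhood of a fixed $\overline{G}$) and then derives the global claim by a second compactness step; your one-pass treatment is tighter and produces the same conclusion. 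More substantively, in the non-vanishing case $\|(\mu_n,\Sigma_n)-(\mu_n^{*},\Sigma_n^{*})\| \geq \epsilon > 0$ you explicitly invoke condition A1 of Definition~\ref{definition:distinguishable}, since the limiting identity involves two genuinely distinct $f$-components; the paper dismisses this case in one sentence (``argued in the same fashion''), even though the first-order condition D1 of Definition~\ref{definition:distinguishable_classes} is stated at a single component and does not cover it. This is a real subtlety in the hypotheses --- the theorem as printed lists only D1 and the uniform Lipschitz condition, so the argument (yours and the paper's) in fact also relies on A1 --- and you are simply more transparent about it. Your symmetrization step, passing from $\lambda_n\|(\mu_n,\Sigma_n)-(\mu_n^{*},\Sigma_n^{*})\|/\mathcal{K}_n\to 0$ to the $(\lambda_n+\lambda_n^{*})$ version via $|\lambda_n-\lambda_n^{*}|/\mathcal{K}_n\to 0$, makes explicit a triangle-inequality step the paper takes for granted. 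Finally, your upper bound via a direct $L^1$-Lipschitz estimate of the kernel is a more elementary equivalent of the paper's appeal to the $W_1$ formula of Lemma~\ref{lemma:bound_Wasserstein_metric}.
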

\noindent See Appendix~\ref{subsec:proof:theorem:strong_identifiable_model_distinguishable} for the proof of Theorem~\ref{theorem:strong_identifiable_model_distinguishable}. Since the MLE approach yields the convergence rate $n^{-1/2}$ up to some logarithmic factor for $p_{G_{*}}$ under the first order uniform Lipschitz condition of $f$, the result of Theorem \ref{theorem:strong_identifiable_model_distinguishable} directly yields the convergence rate $n^{-1/2}$ up to some logarithmic factor for $G_{*}$ under metric $\mathcal{K}$. This entails that the estimation of weight $\lambda_{*}$ converges at rate $n^{-1/2}$ up to some logarithmic factor while the convergence rate of estimating $(\mu^{*},\Sigma^{*})$ is typically much slower than $n^{-1/2}$ as it depends on the rate of convergence of $\lambda^{*}$ to 0 (cf. Theorem \ref{theorem:convergence_rate_distinguishable_first_order}).
\subsection{Non-distinguishable Settings} \label{Section:nondistinguishable_settings}
When $f$ is not distinguishable to $h_{0}$ up to the first order, the bound in Theorem \ref{theorem:strong_identifiable_model_distinguishable} may not hold in general. In this section, we investigate the inverse bounds under the specific settings of non-distinguishable in the first-order models when $h_0$ belongs to the family $f(\cdot|\mu, \Sigma)$, i.e., $h_0(x) = f(x|\mu_0, \Sigma_0)$ for some $(\mu_0, \Sigma_0)\in \Theta \times \Sigma$. Our studies are divided into two separate regimes of $f$: the first setting is when $f$ is strongly identifiable in the second order (cf. Definition~\ref{def:strong_iden}), while the second setting is when it is not. For the simplicity of the presentation in the paper, we define $(\Delta \mu, \Delta \Sigma)=(\mu-\mu_{0}, \Sigma-\Sigma_{0})$ for any element $(\mu,\Sigma) \in \Theta \times \Omega$.
\begin{definition}[\bf{Strong Identifiability}]
\label{def:strong_iden}
We say that $f$ is strongly identifiable in the second order if $f$ is twice differentiable in $(\mu,\Sigma)$ and the following holds:
\begin{itemize}
    \item[D2.] For any positive integer $k$, given $k$ distinct pairs $(\mu_1,\Sigma_1),\ldots,(\mu_k,\Sigma_k)$, if we have $\alpha^{(i)}_{\eta}$ such that
\begin{align*}\sum_{\ell=0}^2\sum_{|\eta|=\ell}\sum_{i=1}^{k}\alpha^{(i)}_{\eta}\dfrac{\partial^{|\eta|f}}{\partial\mu^{\eta_1}\partial\Sigma^{\eta_2}}(x|\mu_i,\Sigma_i)=0,
\end{align*}
for almost all $x\in\mathcal{X}$, then $\alpha^{(i)}_{\eta}=0$ for all $i\in[k]$ and $|\eta|\leq 2$. 
\end{itemize}

\end{definition}
\subsubsection{Strongly Identifiable Settings} \label{Section:partially_strongly_identifiable}
Now, we have the following result regarding the lower bound of $V(p_{G}, p_{G_{*}})$ under the strongly identifiable settings of $f$.
\begin{theorem} \label{theorem:strong_identifiable_model_not_distinguishable}
Assume that $h_0(x) = f(x|\mu_0, \Sigma_0)$ for some $(\mu_0, \Sigma_0)\in \Theta \times \Sigma$ and $f$ is strongly identifiable in the second order and admits uniform Lipschitz condition up to the second order. Furthermore, we denote 
\begin{align}
\mathcal{D}(G,G_{*}) & : = \lambda \|(\Delta \mu,\Delta \Sigma)\|^{2}+\lambda^{*}\|(\Delta \mu^{*},\Delta \Sigma^{*})\|^{2} -\min \left\{\lambda,\lambda^{*}\right\}\big(\|(\Delta \mu,\Delta \Sigma)\|^{2} \|(\Delta \mu^{*},\Delta \Sigma^{*})\|^{2}\big)\nonumber \\
 & + \big(\lambda \|(\Delta \mu,\Delta \Sigma)\|+ \lambda^{*}\|(\Delta \mu^{*},\Delta \Sigma^{*})\|\big)\|(\mu,\Sigma)-(\mu^{*},\Sigma^{*})\| \nonumber
\end{align}
for any $G$ and $G_{*}$. Then, there exists a positive constant $C$ depending only on $\Theta$, $\Omega$, and $(\mu_{0},\Sigma_{0})$ such that  $V(p_{G}, p_{G_{*}}) \geq  C. \mathcal{D}(G,G_{*}),$
for all $G$ and $G_{*}$.
\end{theorem}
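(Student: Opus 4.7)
The plan is to argue by contradiction and exploit the Taylor expansion technique developed in the finite mixture literature~\cite{Ho-Nguyen-EJS-16, Ho-Nguyen-Ann-16}. Suppose the inequality fails, so there exists a sequence $\{(G_n, G_{*,n})\}_{n\ge 1}$ with $V(p_{G_n}, p_{G_{*,n}})/\mathcal{D}(G_n, G_{*,n}) \to 0$; I will derive a contradiction. By compactness of $\Xi$, I pass to a subsequence along which $\lambda_n \to \bar{\lambda}$, $\lambda^*_n \to \bar{\lambda}^*$, $(\mu_n,\Sigma_n) \to (\bar{\mu},\bar{\Sigma})$, and $(\mu^*_n,\Sigma^*_n) \to (\bar{\mu}^*,\bar{\Sigma}^*)$. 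Since $\mathcal{D}$ is bounded on the compact set $\Xi\times\Xi$, we also have $V(p_{G_n},p_{G_{*,n}}) \to 0$, so Scheff\'e's lemma gives $p_{\bar{G}}=p_{\bar{G}^*}$ almost everywhere. The classical identifiability implied by Definition~\ref{def:strong_iden} then forces the limiting parameters to align. Together with the assumption $h_0 = f(\cdot|\mu_0,\Sigma_0)$, this splits the remainder of the argument into a \emph{global} case in which at least one limiting component stays bounded away from $(\mu_0,\Sigma_0)$ and a \emph{local} case in which both $(\mu_n,\Sigma_n)$ and $(\mu^*_n,\Sigma^*_n)$ converge to $(\mu_0,\Sigma_0)$.

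For the local case, which is the heart of the proof, I will use the algebraic identity $p_{G_n} - p_{G_{*,n}} = \lambda_n[f(\cdot|\mu_n,\Sigma_n) - h_0] - \lambda^*_n[f(\cdot|\mu^*_n,\Sigma^*_n) - h_0]$ and expand each bracket to second order around $(\mu_0,\Sigma_0)$ to obtain
\[
p_{G_n}(x) - p_{G_{*,n}}(x) \;=\; \sum_{1\le |\eta|\le 2} \frac{c_\eta^{(n)}}{\eta!}\cdot \frac{\partial^{|\eta|} f}{\partial\mu^{\eta_1}\partial\Sigma^{\eta_2}}(x|\mu_0,\Sigma_0) \;+\; E_n(x),
\]
where $c_\eta^{(n)} := \lambda_n\Delta_n^{\eta} - \lambda^*_n(\Delta_n^*)^{\eta}$ with $\Delta_n=(\Delta\mu_n,\Delta\Sigma_n)$ and $\Delta_n^* = (\Delta\mu_n^*,\Delta\Sigma_n^*)$, and $E_n$ is the Taylor remainder. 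The uniform second-order Lipschitz hypothesis on $f$ controls $\|E_n\|_{L^1} \le \lambda_n\cdot o(\|\Delta_n\|^2) + \lambda^*_n\cdot o(\|\Delta^*_n\|^2) = o(\mathcal{D}(G_n,G_{*,n}))$, because $\lambda_n\|\Delta_n\|^2+\lambda^*_n\|\Delta^*_n\|^2$ is a dominant summand of $\mathcal{D}$.

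I will then renormalize by $A_n := \max_{1\le|\eta|\le 2}|c_\eta^{(n)}|$: the ratios $c_\eta^{(n)}/A_n$ lie in $[-1,1]$ with at least one of absolute value one, so along a further subsequence they converge to limits $\gamma_\eta$ that are not all zero. A term-by-term comparison, splitting into sub-regimes based on the relative orders of $\lambda_n\|\Delta_n\|$, $\lambda^*_n\|\Delta^*_n\|$, and $|\lambda_n-\lambda^*_n|$, delivers $\mathcal{D}(G_n,G_{*,n}) \lesssim A_n$; combining this with the assumption $V(p_{G_n},p_{G_{*,n}})=o(\mathcal{D})$ and $\|E_n\|_{L^1}=o(\mathcal{D})$ forces
\[
\Bigl\| \sum_{1\le|\eta|\le 2} \frac{c_\eta^{(n)}/A_n}{\eta!}\,\partial^{|\eta|} f(\cdot|\mu_0,\Sigma_0) \Bigr\|_{L^1} \longrightarrow 0,
\]
and, passing to the limit in the (now bounded, convergent) coefficients, $\sum_{1\le|\eta|\le 2} (\gamma_\eta/\eta!)\,\partial^{|\eta|} f(x|\mu_0,\Sigma_0) = 0$ for almost every $x$. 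The second-order strong identifiability of $f$ (Definition~\ref{def:strong_iden} applied at the single point $(\mu_0,\Sigma_0)$, i.e.~$k=1$) forces $\gamma_\eta\equiv 0$, contradicting $\max_\eta|\gamma_\eta|=1$. The global case is simpler: once the limits of $(\bar\mu,\bar\Sigma)$ or $(\bar\mu^*,\bar\Sigma^*)$ avoid $(\mu_0,\Sigma_0)$, a three-component linear-independence argument involving $h_0$, $f(\cdot|\bar\mu,\bar\Sigma)$, and $f(\cdot|\bar\mu^*,\bar\Sigma^*)$---again a consequence of strong identifiability---yields the contradiction directly.

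The hard part will be establishing the comparison $\mathcal{D}(G_n,G_{*,n}) \lesssim A_n$ used above. The loss function $\mathcal{D}$ blends first- and second-order quantities with the coupling term $(\lambda\|\Delta\|+\lambda^*\|\Delta^*\|)\|(\mu,\Sigma)-(\mu^*,\Sigma^*)\|$, and verifying that no degenerate configuration of $(\lambda_n,\lambda^*_n,\Delta_n,\Delta^*_n)$ makes $A_n$ vanish strictly faster than $\mathcal{D}$ is precisely what pins down the functional form of $\mathcal{D}$ stated in the theorem. The case analysis will have to treat at least (i) $\lambda_n\asymp\lambda^*_n$ with $\Delta_n\asymp\Delta^*_n$, (ii) $\lambda_n\asymp\lambda^*_n$ with $\Delta_n$ not comparable to $\Delta^*_n$, and (iii) $\lambda_n$ not comparable to $\lambda^*_n$, with each sub-regime highlighting a different summand of $\mathcal{D}$.
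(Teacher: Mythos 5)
Your overall strategy — Fatou/compactness reduction, subsequence extraction, Taylor expansion, normalization, and contradiction with second-order strong identifiability — matches the paper's template. But the choice of Taylor base point is wrong, and this introduces a genuine, hard-to-repair gap. You expand both $f(\cdot|\mu_n,\Sigma_n) - h_0$ and $f(\cdot|\mu^*_n,\Sigma^*_n) - h_0$ around $(\mu_0,\Sigma_0)$, so the remainder $E_n$ is of size $\lambda_n\|\Delta_n\|^{2+\gamma} + \lambda^*_n\|\Delta^*_n\|^{2+\gamma}$. You then assert that this is $o(\mathcal{D})$ ``because $\lambda_n\|\Delta_n\|^2 + \lambda^*_n\|\Delta^*_n\|^2$ is a dominant summand of $\mathcal{D}$.'' That claim is false. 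WLOG $\lambda_n\le\lambda^*_n$, and then after the cancellation with $-\min\{\lambda,\lambda^*\}(\|\Delta\|^2+\|\Delta^*\|^2)$ one has
\begin{equation*}
\mathcal{D}(G_n,G_{*,n}) = (\lambda^*_n-\lambda_n)\|\Delta^*_n\|^2 + \bigl(\lambda_n\|\Delta_n\| + \lambda^*_n\|\Delta^*_n\|\bigr)\|(\mu_n,\Sigma_n)-(\mu^*_n,\Sigma^*_n)\|,
\end{equation*}
so the term $\lambda_n\|\Delta_n\|^2$ is entirely cancelled and survives only through the coupling factor $\|(\mu_n,\Sigma_n)-(\mu^*_n,\Sigma^*_n)\|$, which can be arbitrarily smaller than $\|\Delta_n\|$. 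A concrete failure (scalar, $\Sigma$ fixed): take $\lambda_n=\lambda^*_n=1/2$, $\Delta^*_n = n^{-1}$, $\Delta_n = n^{-1}+n^{-3}$. Then $\mathcal{D}\asymp n^{-4}$, $A_n := \max_{|\eta|\le 2}|c^{(n)}_\eta| \asymp n^{-3}$, but $E_n\asymp n^{-(2+\gamma)}$, so $E_n/\mathcal{D}\asymp n^{2-\gamma}\to\infty$ and $E_n/A_n\asymp n^{1-\gamma}\to\infty$ whenever $\gamma\le 1$. Thus the normalized sum does not converge, the $\gamma_\eta$ cannot be extracted, and the argument collapses precisely in the regime where $G_n$ and $G_{*,n}$ are much closer to each other than either is to $(\mu_0,\Sigma_0)$ — which is exactly the regime the coupling term in $\mathcal{D}$ is designed to capture.

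The paper's proof avoids this by decomposing $p_{G_n}-p_{G_{*,n}}$ as $(\lambda^*_n-\lambda_n)\bigl[f(\cdot|\mu_0,\Sigma_0)-f(\cdot|\mu^*_n,\Sigma^*_n)\bigr] + \lambda_n\bigl[f(\cdot|\mu_n,\Sigma_n)-f(\cdot|\mu^*_n,\Sigma^*_n)\bigr]$ and Taylor expanding \emph{around $(\mu^*_n,\Sigma^*_n)$}, not around $(\mu_0,\Sigma_0)$. The first bracket contributes a remainder of size $(\lambda^*_n-\lambda_n)\|\Delta^*_n\|^{2+\gamma}$, controlled by $(\lambda^*_n-\lambda_n)\|\Delta^*_n\|^2\le\mathcal{D}$; the second contributes $\lambda_n C_n^{2+\gamma}$ where $C_n=\|(\mu_n,\Sigma_n)-(\mu^*_n,\Sigma^*_n)\|$, controlled by $\lambda_n C_n^2\le(\lambda_n A_n+\lambda^*_n B_n)C_n\le\mathcal{D}$ via the triangle inequality $A_n+B_n\ge C_n$. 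So each remainder inherits the right increment for comparison with $\mathcal{D}$. Your argument cannot be patched by a sharper $L^1$ bound on $E_n$ unless you exploit cancellation between the two Taylor remainders (essentially re-expanding around a moving base point, i.e.\ reproducing the paper's decomposition); the ``easy'' bound $\|E_n\|\le \lambda_n\|R_n\|+\lambda^*_n\|R^*_n\|$ is not enough. A secondary issue, once you fix the base point, is that your claimed comparison $\mathcal{D}\lesssim A_n$ is exactly the paper's ``assume all coefficients vanish and derive $1\to 0$'' step and requires the detailed case analysis in equations~\eqref{eqn:theorem_proof_second}--\eqref{eqn:theorem_proof_fourth} of the paper, including the separate treatment of pure-$\mu$, pure-$\Sigma$, and mixed $|\eta|=2$ coefficients; the three sub-regimes you list are a start but do not by themselves pin down the mixed products $\lambda_n\|\Delta\mu_n\|\|\Delta\Sigma_n-\Delta\Sigma^*_n\|$ that $\mathcal{D}$ quietly contains.
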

\noindent The proof of Theorem~\ref{theorem:strong_identifiable_model_not_distinguishable} and the second-order uniform Lipschitz condition are deferred to Appendix~\ref{subsec:proof:theorem:strong_identifiable_model_not_distinguishable}. Several remarks regarding Theorem~\ref{theorem:strong_identifiable_model_not_distinguishable} are in order:

\textbf{(i)} For any $G$ and $G_{*}$, by defining
\begin{align*}
\overline{\mathcal{D}}(G,G_{*}) & : =  |\lambda^{*} - \lambda | \|(\Delta \mu,\Delta \Sigma)\|\|(\Delta \mu^{*},\Delta \Sigma^{*})\|  \nonumber \\
 & + \|(\mu,\Sigma)-(\mu^{*},\Sigma^{*})\|\big(\lambda \|(\Delta \mu,\Delta \Sigma)\|+ \lambda^{*}\|(\Delta \mu^{*},\Delta \Sigma^{*})\|\big)
\end{align*}
we can verify that $1/2 \leq \mathcal{D}(G,G_{*})/\overline{\mathcal{D}}(G,G_{*}) \leq 2$, i.e., $\mathcal{D}(G,G_{*}) \asymp \overline{\mathcal{D}}(G,G_{*})$. The reason that we prefer to use the formation of $\mathcal{D}(G,G_{*})$ over that of $\overline{\mathcal{D}}(G,G_{*})$ is not only due to the convenience of the proof argument of Theorem \ref{theorem:strong_identifiable_model_not_distinguishable} later in Appendix~\ref{Section:Proof-inv-bound} but also due to its partial connection with Wasserstein metric that we are going to discuss in the next remark. 

\textbf{(ii)} When $f$ is a multivariate location family and is identical to $h_{0}$, i.e., $\mu_{0}=\mathbf{0}$, it was demonstrated recently in \cite{gadat2020parameter} that
\begin{align}
V(p_{G}, p_{G_{*}})  & \gtrsim  |\lambda-\lambda^{*}| \|\mu\| \|\mu^{*}\| + (\lambda^{*}\|\mu^{*}\|+\lambda \|\mu\|)\|\mu-\mu^{*}\|, \label{eqn:Gadat_bound}
\end{align}
which is also the key result for establishing the convergence rates of parameter estimation in their work. However, their proof technique only works for the location family and it is unclear what is the sufficient condition for the family of density functions beyond the location family such that the inequality~\eqref{eqn:Gadat_bound} will hold. As the location family is strongly identifiable in the second order, we can verify that the lower bound in Theorem \ref{theorem:strong_identifiable_model_not_distinguishable} and inequality \eqref{eqn:Gadat_bound} are in fact similar. Therefore, the result in Theorem \ref{theorem:strong_identifiable_model_not_distinguishable} gives a generalization of inequality \eqref{eqn:Gadat_bound} in~\cite{gadat2020parameter} under the strongly identifiable in the second order setting of $f$. 

\textbf{(iii)} As indicated in \cite{gadat2020parameter}, we can further lower bound the right-hand side of inequality \eqref{eqn:Gadat_bound} in terms of the second order Wasserstein metric $W_{2}$~\cite{Villani-03} between $G$ and $G_{*}$ when we present $G$ and $G_{*}$ as two discrete probability measures with two components. In particular, with an abuse of the notations we denote that $G=(1-\lambda)\delta_{(\mu_{0},\Sigma_{0})}+\lambda\delta_{(\mu,\Sigma)}$ and $G^{*}=(1-\lambda)\delta_{(\mu_{0},\Sigma_{0})}+\lambda\delta_{(\mu^{*},\Sigma^{*})}$, i.e., we think of $G$ and $G_{*}$ as two mixing measures with one fixed atom to be $(\mu_{0},\Sigma_{0})$. In light of Lemma \ref{lemma:bound_Wasserstein_metric} in Appendix~\ref{sec:auxiliary_results}, we have
\begin{align*}
& W_{2}^{2}(G,G_{*}) \asymp  \lambda \|(\Delta \mu,\Delta \Sigma)\|^{2} + \lambda^{*}\|(\Delta \mu^{*},\Delta \Sigma^{*})\|^{2} \\
& \hspace{3 em} - \min \left\{\lambda,\lambda^{*}\right\}\biggr(\|(\Delta \mu,\Delta \Sigma)\|^{2} +  \|(\Delta \mu^{*},\Delta \Sigma^{*})\|^{2}\biggr)
+ \min \left\{\lambda,\lambda^{*}\right\}\|\|(\mu,\Sigma)-(\mu^{*},\Sigma^{*})\|^{2}. \nonumber
\end{align*}
Therefore, $\mathcal{D}(G,G_{*})$ and $W_{2}^{2}(G,G_{*})$ share the similar term $\lambda \|(\Delta \mu,\Delta \Sigma)\|^{2}+\lambda^{*}\|(\Delta \mu^{*},\Delta \Sigma^{*})\|^{2}-\min \left\{\lambda,\lambda^{*}\right\}\biggr(\|(\Delta \mu,\Delta \Sigma)\|^{2}+\|(\Delta \mu^{*},\Delta \Sigma^{*})\|^{2}\biggr)$ in their formulations. However, as $\lambda \|(\Delta \mu,\Delta \Sigma)\|+ \lambda^{*}\|(\Delta \mu^{*},\Delta \Sigma^{*})\| \geq \min \left\{\lambda,\lambda^{*}\right\}\|\|(\mu,\Sigma)-(\mu^{*},\Sigma^{*})\|$, the remaining term in $\mathcal{D}(G,G_{*})$ is stronger than that of $W_{2}^{2}(G,G_{*})$. Moreover, as $\lambda=\lambda^{*}$, we further obtain that
\begin{eqnarray}
\mathcal{D}(G,G_{*})/W_{2}^{2}(G,G_{*}) \asymp (\|(\Delta \mu,\Delta \Sigma)\|+ \|(\Delta \mu^{*},\Delta \Sigma^{*})\|)/\|(\mu,\Sigma)-(\mu^{*},\Sigma^{*})\|. \nonumber
\end{eqnarray}
Hence, as long as the right-hand side term in the above display goes to $\infty$, i.e., $||(\Delta \mu +\Delta \mu^{*}, \Delta \Sigma+ \Delta \Sigma^{*})|| \to 0$, we have $\mathcal{D}(G,G_{*})/W_{2}^{2}(G,G_{*}) \to \infty$. This strong refinement of the Wasserstein metric  is due to the special structure of $G$ and $G_{*}$ as one of their components is always fixed to be $ (\mu_{0},\Sigma_{0})$. 

\textbf{(iv)} Under the setting when $G_{*}$ is varied, $d_{1}=1$, and $d_{2}=0$, by means of Fatou's lemma the result from Theorem 4.6 in \cite{heinrich2018strong} yields $V(p_{G}, p_{G_{*}}) \geq C'.W_{3}^{3}(G,G_{*})$
if the kernel density function $f$ is 4-strongly identifiable (cf. Definition 2.2 in~\cite{heinrich2018strong}) and satisfies uniform Lipschitz condition up to the fourth order where $C'$ is some positive constant depending only on $G$ and $G_{*}$. Since $\mathcal{D}(G,G_{*}) \gtrsim W_{2}^{2}(G,G_{*}) \geq W_{1}^{2}(G,G_{*}) \gtrsim W_{3}^{3}(G,G_{*})$, it indicates that the bound in Theorem \ref{theorem:strong_identifiable_model_not_distinguishable} is much tighter than this bound. The loss of efficiency in this bound is again due to the special structures of $G$ and $G_{*}$ as one of their components is always fixed to be $(\mu_{0},\Sigma_{0})$.


Unlike the convergence rate results from the strongly distinguishable in the first order setting between $f$ and $h_{0}$ in Theorem \ref{theorem:strong_identifiable_model_distinguishable}, the convergence rate of $\lambda^{*}$ under the setting of Theorem \ref{theorem:strong_identifiable_model_not_distinguishable} depends on the rate of convergence of $\|(\Delta \mu^{*},\Delta \Sigma^{*})\|^{2}$ to 0 (cf. Theorem \ref{theorem:convergence_rate_not_distinguishable_strong_identifiable}). Additionally, the convergence rate of estimating $(\mu^{*},\Sigma^{*})$ will be determined based on the convergence rates of $\lambda^{*}$ and $(\Delta \mu^{*},\Delta \Sigma^{*})$ to 0. 
\subsubsection{Weakly Identifiable Settings} \label{Section:partially_weakly_identifiable}
Thus far, as $h_0$ belongs to the family $f$, our results regarding the lower bounds between $p_{G}$ and $p_{G_{0}}$ under Total Variation distance rely on the strongly identifiable in the second order assumption of kernel $f$. However, there are various families of density functions that do not satisfy such an assumption, which we refer to as the weakly identifiable condition. To illustrate the non-uniform natures of $V(p_{G}, p_{G_{*}})$ under the weakly identifiable condition of $f$, we consider specifically a popular setting of $f$ in this section: multivariate location-covariance Gaussian kernel.

\noindent
\textbf{Location-covariance multivariate Gaussian kernel:} As indicated in the previous work in the literature~\cite{Chen-2003, Kasahara_number_components, Ho-Nguyen-Ann-16}, if $f$ is a family of multivariate location-covariance Gaussian distributions in $d$ dimension, it exhibits the \textit{heat partial differential equation (PDE)} with respect to the location and covariance parameter $\dfrac{\partial^{2}{f}}{\partial{\mu}\partial\mu^{\top}}(x|\mu,\Sigma)=2\dfrac{\partial{f}}{\partial{\Sigma}}(x|\mu,\Sigma)$,  
for any $x \in \mathbb{R}^{d}$ and $(\mu,\Sigma) \in \Theta \times \Omega$. We can verify that this structure leads to the loss of the second-order strong identifiability condition of the Gaussian kernel. Note that, the PDE structure of the Gaussian kernel has been shown to lead to very slow convergence rates of parameter estimation under general over-fitted Gaussian mixture models (cf. Theorem 1.1 in \cite{Ho-Nguyen-Ann-16}). For the setting of the \modelname, since the parameters $\lambda^{*}$ and $(\mu^{*},\Sigma^{*})$ are allowed to vary with the sample size, we may expect that the estimation of these parameters will also suffer from the very slow rate. In fact, we achieve the following lower bound of $V(p_{G}, p_{G_{*}})$ under the multivariate location-covariance Gaussian kernel.
\begin{theorem}  \label{theorem:weakly_identifiable_Gaussian}
Assume that $h_0(x) = f(x|\mu_0, \Sigma_0)$ for some $(\mu_0, \Sigma_0)\in \Theta \times \Sigma$ and $f$ is a family of multivariate location-covariance Gaussian distributions. We denote
\begin{align*}
\mathcal{Q}(G,G_{*})& : = \lambda(\|\Delta \mu\|^{4}+\|\Delta \Sigma\|^{2})+\lambda^{*}(\|\Delta \mu^{*}\|^{4}+\|\Delta \Sigma^{*}\|^{2}) \\
& - \min \left\{\lambda,\lambda^{*}\right\}\biggr(\|\Delta \mu\|^{4}+\|\Delta \Sigma\|^{2}  +\|\Delta \mu^{*}\|^{4} + \|\Delta \Sigma^{*}\|^{2}\biggr) \\
& + \biggr(\lambda(\|\Delta \mu\|^{2}+\|\Delta \Sigma\|)+\lambda^{*}(\|\Delta \mu^{*}\|^{2}+\|\Delta \Sigma^{*}\|)\biggr) \biggr(\|\mu-\mu^{*}\|^{2}+\|\Sigma-\Sigma^{*}\|\biggr), 
\end{align*}
for any $G$ and $G_{*}$. Then, we can find a positive constant $C$ depending only on $\Theta$, $\Omega$, and $(\mu_{0},\Sigma_{0})$ such that
$V(p_{G}, p_{G_{*}}) \geq C.\mathcal{Q}(G,G_{*})$, 
for any $G$ and $G_{*}$. 
\end{theorem}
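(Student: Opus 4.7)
The plan is to prove the bound by contradiction, via a higher-order Taylor expansion that respects the heat equation satisfied by the Gaussian kernel, mirroring the scheme of Theorem~\ref{theorem:strong_identifiable_model_not_distinguishable} but pushed through effective order four in $\mu$ and order two in $\Sigma$. Suppose the inequality fails; then there exist sequences $G_n=(\lambda_n,\mu_n,\Sigma_n)$ and $G_{*,n}=(\lambda^*_n,\mu^*_n,\Sigma^*_n)$ in $\Xi$ with $V(p_{G_n},p_{G_{*,n}})/\mathcal{Q}(G_n,G_{*,n})\to 0$. By compactness of $\Xi$, pass to a subsequence so that both converge, to limits $\bar G$ and $\bar G_*$. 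If at least one of $(\bar\mu,\bar\Sigma)$ or $(\bar\mu^*,\bar\Sigma^*)$ differs from $(\mu_0,\Sigma_0)$, then continuity of $f$ in its parameters together with the identifiability of the two-component Gaussian mixture with one atom fixed at $(\mu_0,\Sigma_0)$ forces $V(p_{\bar G},p_{\bar G_*})>0$, while $\mathcal{Q}(\bar G,\bar G_*)$ stays of constant order---contradiction. So we may assume $(\mu_n,\Sigma_n),(\mu^*_n,\Sigma^*_n)\to(\mu_0,\Sigma_0)$.

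Now write
\begin{equation*}
p_{G_n}(x)-p_{G_{*,n}}(x)=\lambda_n\bigl[f(x|\mu_n,\Sigma_n)-f(x|\mu_0,\Sigma_0)\bigr]-\lambda^*_n\bigl[f(x|\mu^*_n,\Sigma^*_n)-f(x|\mu_0,\Sigma_0)\bigr],
\end{equation*}
and Taylor expand each bracket about $(\mu_0,\Sigma_0)$, keeping all partial derivatives $\partial^{|\alpha|}f/\partial\mu^{\alpha_1}\partial\Sigma^{\alpha_2}$ with $|\alpha_1|+2|\alpha_2|\le 4$. The remainders are uniformly $o(\|\Delta\mu_n\|^4+\|\Delta\Sigma_n\|^2)$ and $o(\|\Delta\mu^*_n\|^4+\|\Delta\Sigma^*_n\|^2)$ on any bounded $x$-set. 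Now invoke the Gaussian heat equation $\partial^2 f/\partial\mu\partial\mu^\top=2\,\partial f/\partial\Sigma$ and its iterates: every $\Sigma$-derivative appearing in the expansion can be rewritten as a pure $\mu$-derivative of twice the order. Collecting coefficients, we obtain
\begin{equation*}
p_{G_n}(x)-p_{G_{*,n}}(x)=\sum_{1\le|\alpha|\le 4}A_{n,\alpha}\,\dfrac{\partial^{|\alpha|}f}{\partial\mu^{\alpha}}(x|\mu_0,\Sigma_0)+R_n(x),
\end{equation*}
where each $A_{n,\alpha}$ is an explicit polynomial in $\lambda_n,\lambda^*_n,\Delta\mu_n,\Delta\Sigma_n,\Delta\mu^*_n,\Delta\Sigma^*_n$ and $\int|R_n(x)|\,dm(x)=o(\mathcal{Q}(G_n,G_{*,n}))$.

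Let $M_n:=\max_\alpha|A_{n,\alpha}|$. A combinatorial bookkeeping of how the parameter increments distribute across the coefficients $A_{n,\alpha}$---matching the pure-$\mu$ monomials of degree up to four (which absorb both $\|\Delta\mu\|^4$ and $\|\Delta\Sigma\|^2$ contributions via the heat PDE), together with cross terms produced by relative shifts $\mu_n-\mu^*_n$ and $\Sigma_n-\Sigma^*_n$, and with the cancellations recorded by the $\min\{\lambda_n,\lambda^*_n\}$ piece of $\mathcal{Q}$---yields $M_n\asymp\mathcal{Q}(G_n,G_{*,n})$. Dividing through by $M_n$ and passing to a further subsequence on which $A_{n,\alpha}/M_n\to c_\alpha$ with $\max_\alpha|c_\alpha|=1$, the hypothesis $V(p_{G_n},p_{G_{*,n}})/\mathcal{Q}(G_n,G_{*,n})\to 0$ together with Fatou's lemma forces
\begin{equation*}
\sum_{1\le|\alpha|\le 4}c_\alpha\,\dfrac{\partial^{|\alpha|}f}{\partial\mu^{\alpha}}(x|\mu_0,\Sigma_0)=0 \quad \text{for a.e. } x\in\mathbb{R}^d.
\end{equation*}
Since these pure $\mu$-derivatives equal $f(x|\mu_0,\Sigma_0)$ times distinct Hermite polynomials in $\Sigma_0^{-1/2}(x-\mu_0)$, they are linearly independent as functions of $x$, so all $c_\alpha=0$, contradicting $\max_\alpha|c_\alpha|=1$.

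The delicate step---and where the heat PDE really bites---is the combinatorial matching establishing $M_n\asymp\mathcal{Q}(G_n,G_{*,n})$: after the PDE collapses every $\Sigma$-derivative onto a pure $\mu$-derivative, one must verify that no further cancellations occur beyond those already encoded by the $\min\{\lambda_n,\lambda^*_n\}$ term, so that the aggregate coefficient magnitude is genuinely of order $\mathcal{Q}(G_n,G_{*,n})$. This demands a case analysis on the relative scales of $|\lambda_n-\lambda^*_n|$, $\|\Delta\mu_n\|$, $\|\Delta\Sigma_n\|$, $\|\Delta\mu^*_n\|$, $\|\Delta\Sigma^*_n\|$, $\|\mu_n-\mu^*_n\|$ and $\|\Sigma_n-\Sigma^*_n\|$, exhibiting in each regime at least one coefficient $A_{n,\alpha}$ matching the dominant term of $\mathcal{Q}$. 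This is also precisely what dictates the asymmetric exponents $\|\Delta\mu\|^4$ versus $\|\Delta\Sigma\|^2$ (respectively $\|\Delta\mu\|^2$ and $\|\Delta\Sigma\|$ in the cross terms): they are exactly what the heat PDE forces when first- and second-order $\Sigma$-derivatives are re-expressed as second- and fourth-order $\mu$-derivatives, and this asymmetry is ultimately responsible for the mismatched location/covariance rates of the MLE under the weakly identifiable Gaussian setting.
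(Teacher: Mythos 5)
Your high-level scheme (contradiction, Taylor expansion, heat PDE collapsing $\Sigma$-derivatives to $\mu$-derivatives, Fatou plus linear independence of Hermite polynomials) matches the paper's strategy, but two steps have genuine problems.

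\textbf{The expansion point breaks the remainder control.} You Taylor expand both $f(x|\mu_n,\Sigma_n)-f(x|\mu_0,\Sigma_0)$ and $f(x|\mu_n^*,\Sigma_n^*)-f(x|\mu_0,\Sigma_0)$ about $(\mu_0,\Sigma_0)$, so the remainder is of size $\lambda_n\,O(\|(\Delta\mu_n,\Delta\Sigma_n)\|^{4+\gamma})+\lambda_n^*\,O(\|(\Delta\mu_n^*,\Delta\Sigma_n^*)\|^{4+\gamma})$, and you assert $\int|R_n|\,dm = o(\mathcal{Q}(G_n,G_{*,n}))$. This is false in general. Take the univariate case with $\lambda_n=\lambda_n^*$, $\Delta\mu_n = a_n$, $\Delta\mu_n^* = a_n(1+\epsilon_n)$, $\Delta v_n=\Delta v_n^*=0$, with $a_n\to 0$ and $\epsilon_n\to 0$. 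Then $\mathcal{Q}(G_n,G_{*,n})\asymp \lambda_n a_n^4\epsilon_n^2$ while your remainder is $\lambda_n\,O(a_n^{4+\gamma})$; choosing $\epsilon_n=a_n^\gamma$ makes $R_n/\mathcal{Q}\to\infty$. The paper avoids this by writing $p_{G_n}-p_{G_{*,n}}=(\lambda_n^*-\lambda_n)[f(\cdot|\mu_0,\Sigma_0)-f(\cdot|\mu_n^*,\Sigma_n^*)]+\lambda_n[f(\cdot|\mu_n,\Sigma_n)-f(\cdot|\mu_n^*,\Sigma_n^*)]$ and expanding the second bracket about $(\mu_n^*,\Sigma_n^*)$, so its remainder is governed by the \emph{relative} increment $\|(\mu_n,\Sigma_n)-(\mu_n^*,\Sigma_n^*)\|$, which is precisely the scale appearing in the cross term of $\mathcal{Q}$; this is what makes the ratio vanish.

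\textbf{The crucial coefficient lower bound is stated, not proved.} What you actually need is $\mathcal{Q}(G_n,G_{*,n})\lesssim M_n=\max_\alpha|A_{n,\alpha}|$ (the other direction of your asserted $M_n\asymp\mathcal{Q}$ is neither needed nor generally true). You correctly flag this as ``the delicate step,'' but then leave it at ``combinatorial bookkeeping'' and ``a case analysis on the relative scales.'' That case analysis \emph{is} the proof of the theorem: the paper's Proposition~\ref{proposition:Gaussian_model} carries out a nested case analysis (on the ratio $(\lambda_n^*-\lambda_n)/\lambda_n$, on which of $|\Delta\mu_n|$, $|\Delta v_n|$, etc.\ dominates, and on whether the cross term dominates the $(\lambda^*-\lambda)$ term) that reduces each regime to a small algebraic system of limits with no nontrivial solution — e.g.\ via the nonexistence of solutions to a system of polynomial equations as in Proposition~2.1 of~\cite{Ho-Nguyen-Ann-16}. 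Without this, the conclusion that one can normalize by $M_n$ and still detect the nonvanishing $c_\alpha$ is unsupported; indeed the case analysis is exactly where the mismatched exponents in $\mathcal{Q}$ are forced, so the structure of $\mathcal{Q}$ cannot be recovered merely by ``matching monomials'' after the heat-PDE collapse.

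A smaller issue: your reduction ``if at least one limit differs from $(\mu_0,\Sigma_0)$, then $V(p_{\bar G},p_{\bar G_*})>0$'' fails when $(\mu_n,\Sigma_n)$ and $(\mu_n^*,\Sigma_n^*)$ converge to a common limit $\neq(\mu_0,\Sigma_0)$ and $\bar G=\bar G_*$; that subcase still requires a rate analysis, which the paper handles (via the first-order expansion argument of its Cases~2 and~3 in Proposition~\ref{proposition:strong_identifiable_model}).
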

\noindent See Appendix~\ref{subsec:proof:theorem:weakly_identifiable_Gaussian} for the proof of Theorem~\ref{theorem:weakly_identifiable_Gaussian}. A few comments with Theorem~\ref{theorem:weakly_identifiable_Gaussian} are in order.

\textbf{(i)} Different from the formulation of $\mathcal{D}(G,G_{*})$ in Theorem \ref{theorem:strong_identifiable_model_not_distinguishable} where we have the same power between $\mu$ and $\Sigma$, there is a mismatch of power between $\|\Delta \mu\|^{2}, \|\Delta \mu^{*}\|^{2}$ and $\|\Delta \Sigma\|, \|\Delta \Sigma^{*}\|$ in the formulation of $\mathcal{Q}(G,G_{*})$. This interesting phenomenon is mainly due to the structure of the heat equation where the second-order derivative of the location parameter and the first-order derivative of the covariance parameter is linearly dependent.

\noindent
\textbf{(ii)} If we denote $\mathcal{Q}'(G,G_{*}) 
: =  \lambda(\|\Delta \mu\|^{4}+\|\Delta \Sigma\|^{2})  +\min \left\{\lambda,\lambda^{*}\right\}\biggr(\|\mu-\mu^{*}\|^{4}+\|\Sigma-\Sigma^{*}\|^{2}\biggr) +\lambda^{*}(\|\Delta \mu^{*}\|^{4}+\|\Delta \Sigma^{*}\|^{2}) - \min \left\{\lambda,\lambda^{*}\right\}\biggr(\|\Delta \mu\|^{4} +  \|\Delta \Sigma\|^{2}+\|\Delta \mu^{*}\|^{4}+\|\Delta \Sigma^{*}\|^{2}\biggr)$,
then we can verify that $\mathcal{Q}(G,G_{*}) \gtrsim \mathcal{Q}'(G,G_{*})$ for any $G,G_{*}$. If we treat $G$ and $G_{*}$ as two-components measures as in the remark (iii) after Theorem \ref{theorem:strong_identifiable_model_not_distinguishable}, we would have
\begin{eqnarray}
\mathcal{Q}'(G,G_{*}) \asymp W_{4}^{4}(G_{1},G_{1,*})+W_{2}^{2}(G_{2},G_{2,*}), \label{eqn:summation_Wasserstein}
\end{eqnarray}
where $G_{1}=(1-\lambda)\delta_{\mu_{0}'}+\lambda\delta_{\mu}$, $G_{2}= (1-\lambda)\delta_{\Sigma_{0}'}+\lambda\delta_{\Sigma}$ and  similarly for $G_{1,*}$ and $G_{2,*}$. Here, $ (\mu_{0},\Sigma_{0})=(\mu_{0}',\Sigma_{0}')$, and $W_{2}, W_{4}$ are respectively second and fourth order Wasserstein metrics. The formulations of $\mathcal{Q}'(G,G_{*})$, therefore, can be thought of as a combination of two Wasserstein metrics: one is with only parameter $\mu$ and another one is only with parameter $\Sigma$. The division into two Wasserstein metrics can be traced back again to the PDE structure of the heat equation.

If $\lambda=\lambda^{*}$ and $(\|\Delta \mu\|^{2}+\|\Delta \mu^{*}\|^{2}+\|\Delta \Sigma\|+\|\Delta \Sigma^{*}\|)/\big(\|\mu-\mu^{*}\|^{2}+\|\Sigma-\Sigma^{*}\|\big) \to \infty$, we will have that $\mathcal{Q}(G,G_{*})/\mathcal{Q}'(G,G_{*}) \to \infty$. It proves that the result from Theorem \ref{theorem:weakly_identifiable_Gaussian} under the multivariate setting of Gaussian kernel is a strong refinement of the summation of Wasserstein metrics regarding location and covariance parameter in equation~\eqref{eqn:summation_Wasserstein}.


A consequence of Theorem \ref{theorem:weakly_identifiable_Gaussian} is that the convergence rate of estimating $\lambda^{*}$ is determined by $\|\Delta \mu^{*}\|^{4}+\|\Delta \Sigma^{*}\|^{2}$, instead of $\|(\Delta \mu^{*},\Delta \Sigma^{*})\|^{2}$ as in the strongly identifiable setting of $f$. Furthermore, we also encounter a phenomenon that the rate of convergence of estimating $\Sigma^{*}$ is much faster than that of estimating $\mu^{*}$. In particular, estimating $\Sigma^{*}$ depends on the rate in which $\lambda^{*}(\|\mu^{*}\|^{2}+\|\Sigma^{*}\|)$ converges to 0 while estimating $\mu^{*}$ relies on square root of this rate (cf. Theorem \ref{theorem:convergence_rate_Gaussian}). 
\section{Minimax Lower Bounds and Convergence Rates of Parameter Estimation} \label{Section:minimax_bounds}
In this section, we study the convergence rates of MLE $\widehat{G}_{n}$ as well as minimax lower bounds of estimating $G_{*}$ under various settings of $h_{0}$ and $f$. Due to space constraints, we present the theory in the distinguishable regime of $h_{0}$ and $f$. Non-distinguishable cases, though more interesting, are deferred to Appendix~\ref{Section:Non-distinguish}.
\begin{theorem} \label{theorem:convergence_rate_distinguishable_first_order}
{\bf (Distinguishable settings)}  
Assume that classes of densities $h_{0}$ and $f$ satisfy the conditions in Theorem \ref{theorem:strong_identifiable_model_distinguishable}. Then, we achieve that

(a) (Minimax lower bound) Assume that $f$ satisfies the following assumption S.1:

(S.1) ${\displaystyle \sup \limits_{\|(\mu,\Sigma)-(\mu',\Sigma')\| \leq c_{0}}{\int \dfrac{ \biggr({\partial^{|\alpha|}{f(x|\mu,\Sigma)}}/{\partial{\mu^{\alpha_{1}}}\partial{\Sigma^{\alpha_{2}}}}\biggr)^{2}}{f(x|\mu',\Sigma')}dx} < \infty}$ for some sufficiently small $c_{0}>0$, where $\alpha_{1} \in \mathbb{N}^{d_{1}}, \alpha_{2} \in \mathbb{N}^{d_{2}}$ in the partial derivative of $f$ take any combination such that $|\alpha|=|\alpha_{1}|+|\alpha_{2}| \leq 1$.

Then for any $r<1$, there exist two universal positive constants $c_{1}$ and $c_{2}$ such that
\begin{eqnarray}
\inf \limits_{\widehat{G}_{n} \in \Xi}\sup \limits_{G \in \Xi} \mathbb{E}_{p_{G}}\biggr(\lambda^{2}\|(\widehat{\mu}_{n},\widehat{\Sigma}_{n})-(\mu,\Sigma)\|^{2}\biggr) \geq c_{1}n^{-1/r}, \nonumber \\
\inf \limits_{\widehat{G}_{n} \in \Xi} \sup \limits_{G \in \Xi} \mathbb{E}_{p_{G}}\biggr(|\widehat{\lambda}_{n}-\lambda|^{2}\biggr) \geq c_{2}n^{-1/r}. \nonumber
\end{eqnarray}
Here, the infimum is taken over all sequences of estimates $\widehat{G}_{n}=(\widehat{\lambda}_{n}, \widehat{\mu}_{n}, \widehat{\Sigma}_{n})$.

(b) (MLE rate) Let $\widehat{G}_{n}$ be the MLE defined in equation~\eqref{eqn:MLE}, and the family $\{p_{G}: G\in \Xi\}$ satisfies condition A2. Then, we have the convergence rate for the MLE:
\begin{eqnarray}
\sup\limits_{G_* \in \Xi} \mathbb{E}_{p_{G*}}\biggr((\lambda^{*})^{2}\|(\widehat{\mu}_{n},\widehat{\Sigma}_{n})-(\mu^{*},\Sigma^{*})\|^{2}\biggr) \lesssim \dfrac{\log^{2} n}{n}, \nonumber \\
\sup\limits_{G_*\in \Xi} \mathbb{E}_{p_{G*}}\biggr(|\widehat{\lambda}_{n}-\lambda^{*}|^{2} \biggr) \lesssim \dfrac{\log^{2} n}{n}. \nonumber
\end{eqnarray}
\end{theorem}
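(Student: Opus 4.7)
For part (b), the strategy is to chain the density-estimation rate of Theorem~\ref{thm:density_estimation_rate} with the lower bound on the Total Variation distance from Theorem~\ref{theorem:strong_identifiable_model_distinguishable}. The first step is to upgrade the in-expectation Hellinger rate to a second-moment statement: a standard concentration argument for the MLE under Assumption A2 yields $\mathbb{P}_{p_{G_*}}(h(p_{\widehat{G}_n}, p_{G_*}) > t) \lesssim \exp(-c n t^2)$ for every $t \geq C\sqrt{\log n/n}$, and because the Hellinger distance is uniformly bounded by $1$, integrating the tail gives $\sup_{G_* \in \Xi}\mathbb{E}_{p_{G_*}} h^2(p_{\widehat{G}_n}, p_{G_*}) \lesssim \log n / n$. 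Using $V^2 \leq 2h^2$ together with the inverse bound $V(p_G, p_{G_*}) \geq C\,\mathcal{K}(G, G_*)$, this transfers to $\mathbb{E}_{p_{G_*}}\mathcal{K}^2(\widehat{G}_n, G_*) \lesssim \log n/n$. Since $|\widehat{\lambda}_n - \lambda^*|$ and $\lambda^*\|(\widehat{\mu}_n, \widehat{\Sigma}_n) - (\mu^*, \Sigma^*)\|$ are each bounded by $\mathcal{K}(\widehat{G}_n, G_*)$, taking expectations yields both announced rates.

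For part (a), I would apply Le Cam's two-point method in two configurations, one per loss function. Fix $(\lambda_0, \mu_0, \Sigma_0)$ with $\lambda_0$ in the interior of $[0,1]$ and $(\mu_0, \Sigma_0)$ in the interior of $\Theta\times\Omega$. For the $\lambda$-rate, take $G^{(0)} = (\lambda_0, \mu_0, \Sigma_0)$ and $G^{(1)} = (\lambda_0 + \epsilon_n, \mu_0, \Sigma_0)$ with $\epsilon_n = c/\sqrt n$; the identity $\partial_\lambda p_G = f - h_0$ combined with the $|\alpha|=0$ case of Assumption S.1 yields $\chi^2(p_{G^{(1)}}, p_{G^{(0)}}) \lesssim \epsilon_n^2$, so the KL divergence between the $n$-fold product measures is $\lesssim n\epsilon_n^2 \lesssim 1$ and Le Cam's inequality gives $\inf_{\widehat{G}_n}\sup_{G\in\Xi}\mathbb{E}_{p_G}|\widehat{\lambda}_n-\lambda|^2 \gtrsim \epsilon_n^2 = c^2/n$. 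For the $(\mu,\Sigma)$-rate, take instead $G^{(1)} = (\lambda_0, \mu_0 + (\epsilon_n/\lambda_0) v, \Sigma_0)$ for a fixed unit direction $v$; a first-order Taylor expansion gives $p_{G^{(1)}} - p_{G^{(0)}} \approx \epsilon_n\, v^{\top}\partial_\mu f(\cdot|\mu_0, \Sigma_0)$, and the $|\alpha|=1$ case of S.1 again yields $\chi^2 \lesssim \epsilon_n^2$. The separation of the loss is $\lambda_0^2\|\mu_1-\mu_0\|^2 = \epsilon_n^2 = c^2/n$. In both cases we obtain a lower bound of order $1/n$, which dominates $c_i n^{-1/r}$ for every $r < 1$ since $n^{-1/r} \leq n^{-1}$ when $n \geq 1$.

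\textbf{Main obstacle.} The most delicate step is the first one in part (b): Theorem~\ref{thm:density_estimation_rate} as stated controls only the first moment of the Hellinger distance, whereas the target bound is on the second moment. Bridging this gap forces one to invoke the underlying exponential concentration from the bracketing-entropy MLE machinery rather than merely its in-expectation corollary, and to use boundedness of $h$ to tame the tail. Once this is in place, the remainder is mechanical: the $\chi^2$ computations in the Le Cam configurations are standard because the reference density $p_{G^{(0)}} \geq (1-\lambda_0) h_0$ is bounded below on an interior choice, and the inequalities $\mathcal{K}(G, G_*) \geq |\lambda-\lambda^*|$ and $\mathcal{K}(G, G_*) \geq \lambda^*\|(\mu,\Sigma)-(\mu^*,\Sigma^*)\|$ make the extraction of the two parameter rates from the single bound on $\mathbb{E}\mathcal{K}^2$ immediate.
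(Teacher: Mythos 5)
Your proposal is correct and reaches both conclusions, but it diverges from the paper's route in two substantive ways, one of which actually tightens the paper's argument. For part (b), the paper simply chains $\mathbb{E}_{p_{G_*}}\mathcal{K}(\widehat G_n,G_*)\lesssim\mathbb{E}_{p_{G_*}} V\leq\mathbb{E}_{p_{G_*}}h\lesssim\sqrt{\log n/n}$ and then declares the stated second-moment bounds proved; as stated, this first-moment inequality does not, by itself, yield $\mathbb{E}_{p_{G_*}}\mathcal{K}^2\lesssim\log^2 n/n$ (Jensen goes the wrong way). You correctly identify this as the delicate step and go through the sub-Gaussian tail $\mathbb{P}_{p_{G_*}}(h>t)\lesssim\exp(-cnt^2)$ that the bracketing-entropy machinery already produces in the proof of Theorem~\ref{thm:density_estimation_rate}; integrating the tail with boundedness of $h$ gives $\mathbb{E}_{p_{G_*}}h^2\lesssim\log n/n$, from which the claimed second-moment rates follow cleanly (indeed with $\log n/n$ rather than $\log^2 n/n$). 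For part (a), the paper's argument is built on Proposition~\ref{proposition:minimax_strong_distinguishable}, which establishes the asymptotic statement $h(p_{G_1},p_{G_2})/d^r(G_1,G_2)\to0$ for each $r<1$, and then feeds this into the modified (nonsymmetric) Le Cam lemma of \cite{gadat2020parameter}; that machinery is what produces the $n^{-1/r}$ form of the lower bound, and it is chosen so it transfers verbatim to the non-distinguishable settings where the nonsymmetry of the loss is genuinely needed. Your direct two-point Le Cam with fixed interior $\lambda_0$, explicit $\chi^2$ control, and $\epsilon_n=c/\sqrt n$ gives a sharper $c/n$ lower bound and so implies the statement for every $r<1$; this is cleaner in the distinguishable setting but would not extend as smoothly to Theorems~\ref{theorem:convergence_rate_not_distinguishable_strong_identifiable} and~\ref{theorem:convergence_rate_Gaussian}, which is the likely reason the paper opts for the heavier machinery. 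One small inaccuracy: for the $\lambda$-perturbation, the $\chi^2$ bound does not follow from the $|\alpha|=0$ case of S.1; what you actually need is $\int(f-h_0)^2/p_{G^{(0)}}\,dx<\infty$, which holds directly because $p_{G^{(0)}}\geq\min\{\lambda_0,1-\lambda_0\}(f+h_0)$ and $\int(f-h_0)^2/(f+h_0)\leq2$; S.1 only enters, as you say, for the $|\alpha|=1$ case used in the $(\mu,\Sigma)$-perturbation.
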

Proof of Theorem~\ref{theorem:convergence_rate_distinguishable_first_order} is in Appendix~\ref{subsec:proof:theorem:convergence_rate_distinguishable_first_order}. The results of Theorem \ref{theorem:convergence_rate_distinguishable_first_order} imply that even though we still can estimate $\lambda^{*}$ at the standard rate $n^{-1/2}$, the convergence rate of $(\widehat{\mu}_{n},\widehat{\Sigma}_{n})$ to $(\mu^{*},\Sigma^{*})$ strictly depends on the vanishing rate of $\lambda^{*}$ to 0. Therefore, the convergence rate of estimating $(\mu^{*},\Sigma^{*})$ can be generally slower than $n^{-1/2}$ as long as $\lambda^{*}$ goes to $0$ at a rate slower than $n^{-1/2}$. 

We can also use the geometric inequalities developed in Section~\ref{Section:nondistinguishable_settings} to investigate the behaviors of $\widehat{G}_{n}$ in the non-distinguishable settings. We will further see how the \emph{non-identifiability} and \emph{singularity} of the model affect the convergence rate for density estimation. Due to space constraints, the results for this setting are presented in Appendix~\ref{Section:Non-distinguish}.
\section{Experiments}\label{Section:experiments-main}
We now demonstrate the convergence rates of parameter estimation in the strongly distinguishable setting, where the choice of $h_0$ is a standard Cauchy distribution, and $f(\cdot | \mu, \sigma^2)$ is the normal distribution with mean $\mu$ and variance $\sigma^2$. The additional experiments with the non-distinguishable setting are deferred to Appendix~\ref{Section:experiments}.

Assume that $X_1, \dots, X_n$ are i.i.d. samples drawn from the true density function $p_{G_{*}}$ with $G_{*}=(\lambda^*, \mu^*, (\sigma^*)^2)$ and we obtain the MLE $(\hat{\lambda}_n, \hat{\mu}_n, \hat{\sigma}^2_n)$. We consider two following cases: 

\textbf{(i)} $\lambda^* = 0.5$, $\mu^* = 2.5$, $(\sigma^*)^2 = 0.25$; 

\textbf{(ii)} $\lambda^* = 0.5 / n^{1/4}$, $\mu^* = 2.5$, $(\sigma^*)^2 = 0.25$. 

Two histograms for samples from the density $p_{G_*}$ with $n=10000$ corresponding to the above two cases are illustrated in Figure~\ref{fig:distinguish_lambda_fixed}(a) and ~\ref{fig:distinguish_lambda_1_4}(a). For each case, we take into account multiple sample sizes $n$ ranging from $10^2$ to $10^4$. For each sample size $n$, we calculate the MLE $(\hat{\lambda}_n, \hat{\mu}_n, \hat{\sigma}^2_n)$ via the EM algorithm \cite{dempster_maximum_1977} and measure the errors $|\widehat{\lambda}_n - \lambda^*|$, $|\widehat{\mu}_n - \mu^*|$, and $|\widehat{\sigma}^2_n - (\sigma^*)^2|$. We repeat this procedure 64 times and plot the mean (blue dot) and quartile error bars (yellow bar) of the logarithm of estimation errors against the log of $n$.    Theorem~\ref{theorem:convergence_rate_distinguishable_first_order} suggests that the log convergence rate of $\lambda^*$ is of order ${-1/2}$ for all cases, and so is the rate for $(\mu, (\sigma)^{2})$ in the first case. Meanwhile, the convergence rates of $(\mu^*, (\sigma^*)^{2})$ in the second case are slower, which is in the order of ${-1/4}$. The empirical rates found in the experiments match this theoretical result, where the least square line shows that the logarithm of the rate for estimating $\mu^*$ in case (i) is -0.5 and that of the case (ii) is -0.27 $\approx -1/4$ (similar for $(\sigma^*)^2$).
\begin{figure}[t!]
      \centering
      \subcaptionbox*{\scriptsize (a) Histogram \par}{\includegraphics[width = 0.24\textwidth]{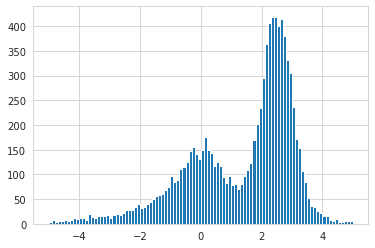}}
      \subcaptionbox*{\scriptsize (b) Rate of $\widehat{\lambda}_n$ \par}{\includegraphics[width = 0.24\textwidth]{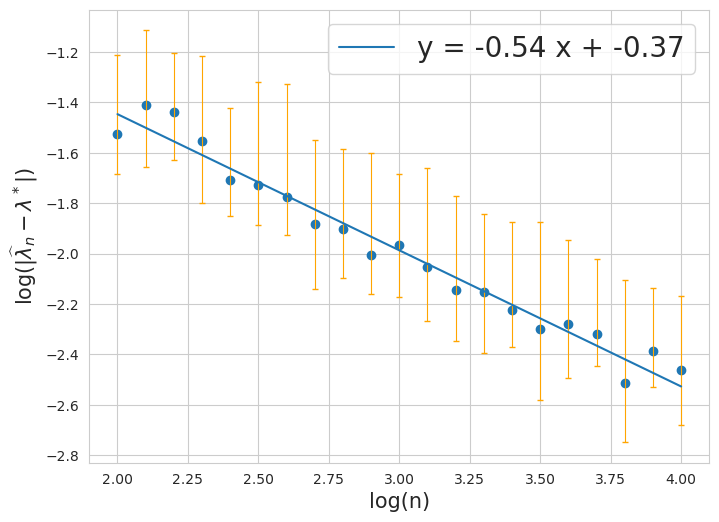}}
      \subcaptionbox*{\scriptsize (c) Rate of $\widehat{\mu}_n$ \par}{\includegraphics[width = 0.24\textwidth]{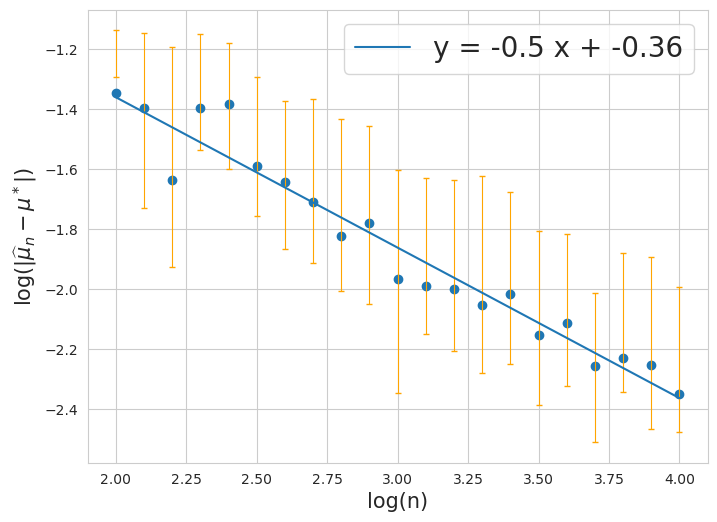}}
      \subcaptionbox*{\scriptsize (d) Rate of $\widehat{\sigma}^2_n$ \par}{\includegraphics[width = 0.24\textwidth]{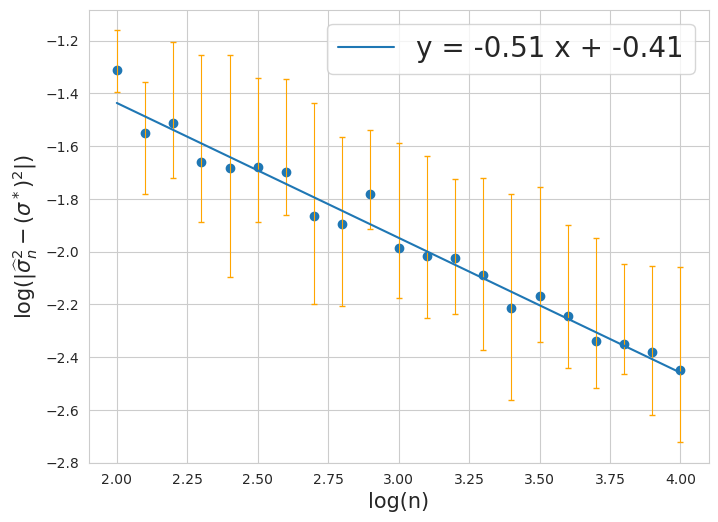}}
      \caption{Case (i) $\lambda^* = 0.5$.}\label{fig:distinguish_lambda_fixed}
\end{figure}

\begin{figure}[t!]
      \centering
      \subcaptionbox*{\scriptsize (a) Histogram \par}{\includegraphics[width = 0.24\textwidth]{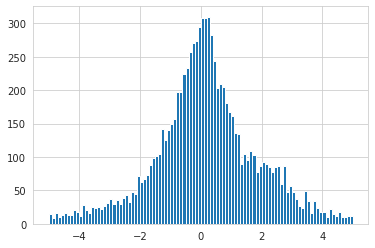}}
      \subcaptionbox*{\scriptsize (b) Rate of $\widehat{\lambda}_n$  \par}{\includegraphics[width = 0.24\textwidth]{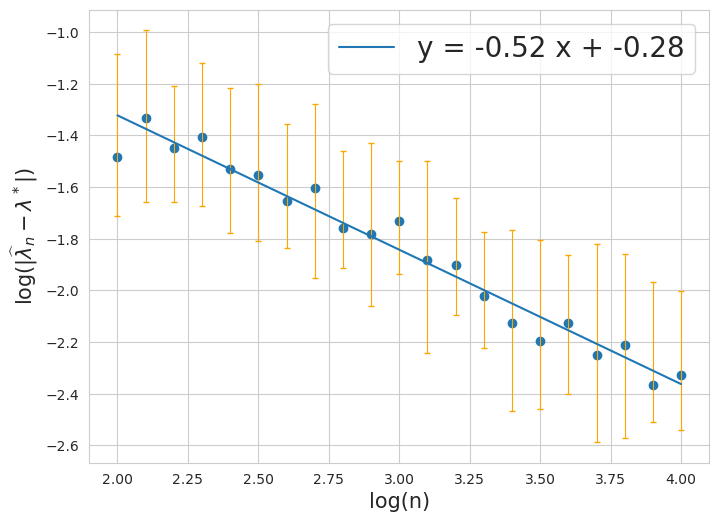}}
      \subcaptionbox*{\scriptsize (c) Rate of $\widehat{\mu}_n$\par}{\includegraphics[width = 0.24\textwidth]{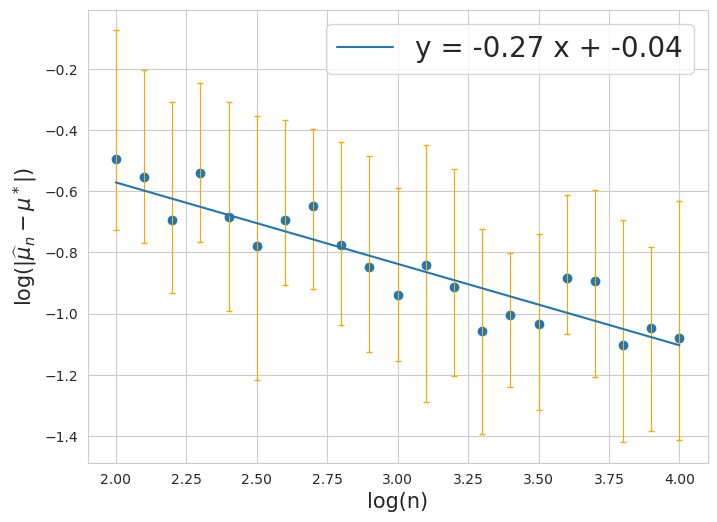}}
      \subcaptionbox*{\scriptsize (d) Rate of $\widehat{\sigma}^2_n$\par}{\includegraphics[width = 0.24\textwidth]{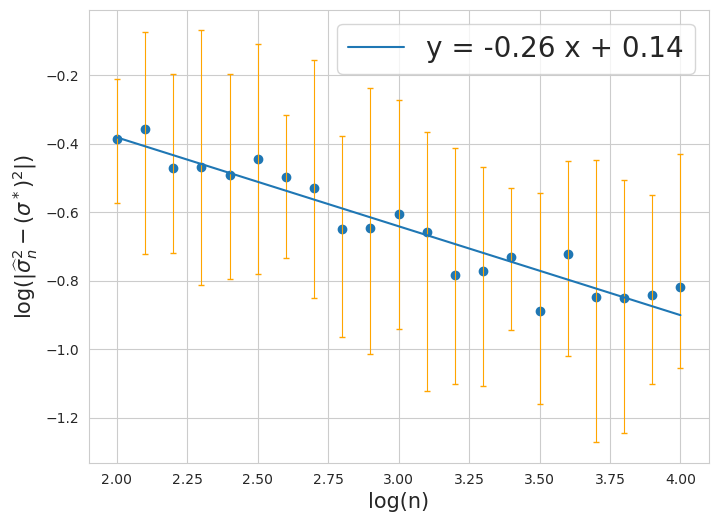}}
      \caption{Case (ii) $\lambda^* = 0.5/n^{1/4}$.}\label{fig:distinguish_lambda_1_4}
      \vspace{-1 em}
\end{figure}
We once again emphasize that this interesting phenomenon of the rates of convergence is due to the \emph{singularity} and \emph{identifiability} of the \modelname. Our theory and simulation have accurately shown quantitative convergence rates for parameter estimation when $\lambda^*$ near the singularity point $0$, where all pairs of $(\mu^*, \Sigma^*)$ in model~\eqref{eqn:true_model} give the same model. Together with the non-distinguishable settings, we provide a comprehensive study of the large-sample theory for this type of model, thanks to the newly developed notion of distinguishability that helps to control the linear independent relation between $h_0$ and $f$. The developed optimal minimax lower bounds and convergence rates will certainly help 
Machine Learning practitioners understand better the role of sample sizes in the accuracy of estimation in the \modelname, and are also inspired theorists to study more about the estimation rate of complex hierarchical/mixture models.
 \section{Conclusion}
 \label{sec:conclusion}
 In this paper, we establish the uniform rate for estimating true parameters in the multivariate deviated model by using the maximum likelihood estimation (MLE) method. During our derivation, we have to overcome two major obstacles, which are firstly the interaction between the known function $h_0$ and the Gaussian density $f$, and secondly the likelihood of the deviated proportion $\lambda^*$ vanishing to either one or zero. To this end, we introduce a notion of distinguishability to control the linearly independent relation between two functions $h_0$ and $f$. Finally, we achieve the optimal convergence rate of the MLE under both the distinguishable and non-distinguishable settings.

\section*{Acknowledgements}
 NH acknowledges support from the NSF IFML 2019844 and the NSF AI Institute for Foundations of Machine Learning.

\bibliography{neurips_ref}
\bibliographystyle{abbrv}

\newpage
\appendix
In this supplementary material, we present additional results and proofs. The minimax lower bounds and convergence rates of parameter estimation in the non-distinguishable settings are presented in Section~\ref{Section:Non-distinguish}. The general theory for the convergence rates of densities and their proofs can be found in Appendix~\ref{Section:density}. Proofs of the geometric inequalities that relate the convergence of density estimation to that of parameter estimation are in Section~\ref{Section:Proof-inv-bound}, while those for minimax lower bounds and convergence rates of parameter estimation are left in Appendix~\ref{sec:rate_minimax_bounds}. Then, we provide a necessary lemma for those results along with its proof in Appendix~\ref{sec:auxiliary_results}. Finally, 
some discussion about the general setting of the paper is presented in Section~\ref{Section:experiments}, followed by a set of simulations to support the developed theory.

\section{Minimax Lower Bounds and Convergence Rates of Parameter Estimation under the Non-distinguishable Settings}\label{Section:Non-distinguish}
\begin{theorem} \label{theorem:convergence_rate_not_distinguishable_strong_identifiable}
{\bf (Strongly identifiable and non-distinguishable settings)}  
Assume that classes of densities $h_{0}$ and $f$ satisfy the conditions in Theorem \ref{theorem:strong_identifiable_model_not_distinguishable}. We define
\begin{align}
\Xi_{1}(l_{n}) : = \Bigg\{G=(\lambda,\mu,\Sigma)\in \Xi:  \dfrac{l_{n}}{\min \limits_{\substack{1 \leq i \leq d_{1}\\ 1 \leq u,v \leq d_{2}}}{\left\{|(\Delta \mu)_{i}|^{2},|(\Delta \Sigma)_{uv}|^{2}\right\}}\sqrt{n}} \leq \lambda \Bigg\},  \nonumber
\end{align}
for any sequence $\left\{l_{n}\right\}$. Then, we achieve

(a) (Minimax lower bound) Assume that $f$ satisfies assumption S.1 in Theorem \ref{theorem:convergence_rate_distinguishable_first_order}. Then for any $r<1$ and sequence $\left\{l_{n}\right\}$, there exist two universal positive constants $c_{1}$ and $c_{2}$ such that
\begin{align}
\inf \limits_{\widehat{G}_{n} \in \Xi}\sup \limits_{G \in \Xi_{1}(l_{n})} \mathbb{E}_{p_{G}}\biggr(\lambda^{2}\|(\Delta \mu,\Delta \Sigma)\|^{2}\|(\widehat{\mu}_{n},\widehat{\Sigma}_{n}) -(\mu,\Sigma)\|^{2}\biggr) \geq c_{1}n^{-1/r}, \nonumber 
\end{align}
\begin{eqnarray}
\inf \limits_{\widehat{G}_{n} \in \Xi} \sup \limits_{G \in \Xi_{1}(l_{n})} \mathbb{E}_{p_{G}}\biggr(\|\Delta \mu,\Delta \Sigma)\|^{4}|\widehat{\lambda}_{n}-\lambda|^{2}\biggr) \geq c_{2}n^{-1/r}. \nonumber
\end{eqnarray}
(b) (MLE rate) Let $\widehat{G}_{n}$ be the MLE defined in equation~\eqref{eqn:MLE}, and the family $\{p_{G}: G\in \Xi\}$ satisfies condition A2. Then, for any sequence $\left\{l_{n}\right\}$ such that $l_{n}/\log n \to \infty$,
\begin{align}
\sup \limits_{G_{*} \in \Xi_{1}(l_{n})} \mathbb{E}_{p_{G*}}\biggr((\lambda^{*})^{2}\|(\Delta \mu^{*},\Delta \Sigma^{*})\|^{2} \|(\widehat{\mu}_{n},\widehat{\Sigma}_{n})-(\mu^{*},\Sigma^{*})\|^{2}\biggr) \lesssim \dfrac{\log^{2} n}{n}, \nonumber
\end{align}
\begin{eqnarray}
\sup \limits_{G_{*} \in \Xi_{1}(l_{n})} \mathbb{E}_{p_{G*}}\biggr(\|(\Delta \mu^{*},\Delta \Sigma^{*}) \|^{4}|\widehat{\lambda}_{n}-\lambda^{*}|^{2} \biggr) \lesssim \dfrac{\log^{2} n}{n}. \nonumber
\end{eqnarray}
\end{theorem}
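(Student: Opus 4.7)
The overall plan is to derive both parts from the density-level Hellinger rate established in Theorem~\ref{thm:density_estimation_rate} together with the inverse inequality $V(p_G,p_{G_*})\geq C\,\mathcal{D}(G,G_*)$ of Theorem~\ref{theorem:strong_identifiable_model_not_distinguishable}. First I would verify the bracketing condition A2 for the multivariate deviated model (via Proposition~\ref{prop:entropy-number-calculation} or an analogous computation) and upgrade the first-moment bound from Theorem~\ref{thm:density_estimation_rate} to the second-moment bound $\mathbb{E}_{p_{G_*}}[h^2(p_{\widehat{G}_n},p_{G_*})]\lesssim \log^2 n/n$ via the standard MLE concentration argument. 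Combined with $V\leq\sqrt{2}\,h$ and Theorem~\ref{theorem:strong_identifiable_model_not_distinguishable}, this yields $\mathbb{E}_{p_{G_*}}[\mathcal{D}(\widehat{G}_n,G_*)^2]\lesssim \log^2 n/n$, which is the workhorse inequality for part~(b).

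For part~(b), the equivalence $\mathcal{D}\asymp\overline{\mathcal{D}}$ noted in the remarks after Theorem~\ref{theorem:strong_identifiable_model_not_distinguishable} supplies two lower bounds on $\mathcal{D}(\widehat{G}_n,G_*)$: first, $\lambda^*\|(\Delta\mu^*,\Delta\Sigma^*)\|\cdot\|(\widehat{\mu}_n,\widehat{\Sigma}_n)-(\mu^*,\Sigma^*)\|$, and second, $|\widehat{\lambda}_n-\lambda^*|\cdot\|(\Delta\widehat{\mu}_n,\Delta\widehat{\Sigma}_n)\|\cdot\|(\Delta\mu^*,\Delta\Sigma^*)\|$. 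Squaring and taking expectations, the first yields the claimed bound on $(\lambda^*)^2\|(\Delta\mu^*,\Delta\Sigma^*)\|^2\|(\widehat{\mu}_n,\widehat{\Sigma}_n)-(\mu^*,\Sigma^*)\|^2$ at once. For the bound on $|\widehat{\lambda}_n-\lambda^*|$, I would combine the triangle inequality $\|(\Delta\widehat{\mu}_n,\Delta\widehat{\Sigma}_n)\|\geq \|(\Delta\mu^*,\Delta\Sigma^*)\|-\|(\widehat{\mu}_n,\widehat{\Sigma}_n)-(\mu^*,\Sigma^*)\|$ with the bound just derived. The restriction $G_*\in\Xi_1(l_n)$ with $l_n/\log n\to\infty$ forces $\lambda^*\|(\Delta\mu^*,\Delta\Sigma^*)\|^2\geq l_n/\sqrt{n}$, which implies $\|(\widehat{\mu}_n,\widehat{\Sigma}_n)-(\mu^*,\Sigma^*)\|=o(\|(\Delta\mu^*,\Delta\Sigma^*)\|)$ on a high-probability event, so that $\|(\Delta\widehat{\mu}_n,\Delta\widehat{\Sigma}_n)\|\geq \frac{1}{2}\|(\Delta\mu^*,\Delta\Sigma^*)\|$ there. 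Substituting back and splitting the expectation into this good event and its low-probability complement (the latter controlled by the compactness of $\Xi$) delivers the desired bound on $\|(\Delta\mu^*,\Delta\Sigma^*)\|^4|\widehat{\lambda}_n-\lambda^*|^2$.

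For part~(a), I would apply Le Cam's two-point method, with assumption S.1 providing the Taylor-type bound on Hellinger distance. For the first inequality I take $G^{(1)}=(\lambda,\mu,\Sigma)$ and $G^{(2)}=(\lambda,\mu+\epsilon v,\Sigma)$ for a unit direction $v$; S.1 and a first-order expansion give $h^2(p_{G^{(1)}},p_{G^{(2)}})\lesssim \lambda^2\epsilon^2$, so choosing $\epsilon\asymp 1/(\lambda\sqrt{n})$ makes the pair $n$-indistinguishable while producing a loss of order $\lambda^2\|(\Delta\mu,\Delta\Sigma)\|^2\epsilon^2\asymp \|(\Delta\mu,\Delta\Sigma)\|^2/n$. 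Selecting $\|(\Delta\mu,\Delta\Sigma)\|\asymp n^{(r-1)/(2r)}$ (which tends to zero for $r<1$) then matches $n^{-1/r}$, while the remaining freedom in $\lambda$ allows both points to be placed in $\Xi_1(l_n)$. The second inequality uses a perturbation in $\lambda$ instead of $\mu$: the derivative $\partial p_G/\partial\lambda=f(x|\mu,\Sigma)-h_0(x)$ has $L^2(p_{G_*})$-norm of order $\|(\Delta\mu,\Delta\Sigma)\|$ (since $h_0=f(\cdot|\mu_0,\Sigma_0)$ and $f$ is smooth), giving $h^2\lesssim \delta^2\|(\Delta\mu,\Delta\Sigma)\|^2$ and hence a loss of $\|(\Delta\mu,\Delta\Sigma)\|^4\delta^2\asymp \|(\Delta\mu,\Delta\Sigma)\|^2/n$, which is tuned to $n^{-1/r}$ analogously. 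The main obstacle throughout is the degeneracy of $\mathcal{D}$ when $\|(\Delta\mu^*,\Delta\Sigma^*)\|$ is small; this is what necessitates the two-step argument in part~(b) and the careful placement of the two-point constructions in $\Xi_1(l_n)$ for part~(a).
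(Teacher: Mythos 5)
Your overall strategy matches the paper's closely. In part (b), both you and the paper pass from the Hellinger rate of Theorem~\ref{thm:density_estimation_rate} through the inverse inequality $V(p_G,p_{G_*})\geq C\,\mathcal{D}(G,G_*)$ of Theorem~\ref{theorem:strong_identifiable_model_not_distinguishable}, extract from $\overline{\mathcal{D}}$ the two factors $\lambda^*\|(\Delta\mu^*,\Delta\Sigma^*)\|\,\|(\widehat{\mu}_n,\widehat{\Sigma}_n)-(\mu^*,\Sigma^*)\|$ and $|\widehat{\lambda}_n-\lambda^*|\,\|(\Delta\widehat{\mu}_n,\Delta\widehat{\Sigma}_n)\|\,\|(\Delta\mu^*,\Delta\Sigma^*)\|$, and then use the constraint $G_*\in\Xi_1(l_n)$ to replace the data-dependent factor $\|(\Delta\widehat{\mu}_n,\Delta\widehat{\Sigma}_n)\|$ by $\|(\Delta\mu^*,\Delta\Sigma^*)\|$. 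Where the paper performs an averaging manipulation (the ``AM-GM'' chain around \eqref{eqn:strongly-ident-consistent}), you propose a pointwise split into a good event and its complement; these are the same idea. Your part~(a) likewise mirrors the paper's non-symmetric semi-metrics $d_3,d_4$ and two-point Le Cam argument; the intermediate bound should be $h^2\lesssim\lambda\epsilon^2$ rather than $\lambda^2\epsilon^2$ (compare Proposition~\ref{proposition:minimax_strong_distinguishable}), but this is cosmetic since only $r<1$ is required.

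The one genuine gap is in how you control the complement's contribution in part~(b). You say it is ``controlled by the compactness of $\Xi$'', but compactness only bounds the loss $\|(\Delta\mu^*,\Delta\Sigma^*)\|^4|\widehat{\lambda}_n-\lambda^*|^2$ by a constant on the bad event; you additionally need the bad-event \emph{probability} to be $O(\log^2 n/n)$ uniformly over $\Xi_1(l_n)$. If you bound that probability by Markov applied to the consistency estimate $\mathbb{E}\|(\widehat{\mu}_n,\widehat{\Sigma}_n)-(\mu^*,\Sigma^*)\|^2/\|(\Delta\mu^*,\Delta\Sigma^*)\|^2\lesssim \log^2 n/(n(\lambda^*)^2\|(\Delta\mu^*,\Delta\Sigma^*)\|^4)\lesssim \log^2 n/l_n^2$, you only obtain a bad-event probability $\lesssim \log^2 n/l_n^2$, which tends to zero under $l_n/\log n\to\infty$ but is in general far larger than $\log^2 n/n$ (take, say, $l_n=\log^2 n$). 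To close the gap you must invoke the exponential tail estimate $\mathbb{P}(h(p_{\widehat{G}_n},p_{G_*})>\delta)\leq c\exp(-n\delta^2/c^2)$ from Step~2 of the proof of Theorem~\ref{thm:density_estimation_rate}; combined with $\mathcal{D}(\widehat{G}_n,G_*)\geq C\lambda^*\|(\Delta\mu^*,\Delta\Sigma^*)\|\,\|(\widehat{\mu}_n,\widehat{\Sigma}_n)-(\mu^*,\Sigma^*)\|$ and the $\Xi_1(l_n)$ constraint $\lambda^*\|(\Delta\mu^*,\Delta\Sigma^*)\|^2\gtrsim l_n/\sqrt{n}$, this yields a bad-event probability $\lesssim\exp(-c'l_n^2)$, which under $l_n/\log n\to\infty$ is $o(n^{-A})$ for every $A$, hence $\lesssim\log^2 n/n$. (The paper's own writeup of this step is terse and its final ``$\gtrsim$'' requires the same device: the second-moment bound \eqref{eqn:strongly-ident-consistent} alone is not enough to make the subtracted term negligible relative to $\mathbb{E}(\widehat{\lambda}_n-\lambda^*)^2$.)
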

Proof of Theorem~\ref{theorem:convergence_rate_not_distinguishable_strong_identifiable} is in Appendix~\ref{subsec:proof:theorem:convergence_rate_not_distinguishable_strong_identifiable}. The results of part (b) are the generalization of those in Theorem 3.1 and Theorem 3.2 in \cite{gadat2020parameter} to the setting of strongly identifiable in the second-order kernel. The condition regarding the lower bound of $\lambda$ in the formation of $\Xi_{1}(l_{n})$ is necessary to guarantee that $(\widehat{\mu}_{n}, \widehat{\Sigma}_{n})$ and $\widehat{\lambda}_{n}$ are consistent estimators of $(\mu^{*},\Sigma^{*})$ and $\lambda^{*}$ respectively. In particular, from the results in equation~\eqref{eqn:proof_not_distinguishable_strong_identifiable_first} of the proof of Theorem \ref{theorem:convergence_rate_not_distinguishable_strong_identifiable}, we have for any $G_{*} \in \Xi$ that
\begin{align}
\mathbb{E}_{p_{G*}}(\lambda^{*})^{2}\|(\Delta \mu^{*},\Delta \Sigma^{*})\|^{2}\|(\widehat{\mu}_{n},\widehat{\Sigma}_{n})-(\mu^{*},\Sigma^{*})\|^{2} \lesssim \dfrac{\log^{2} n}{n}. \nonumber
\end{align}
Therefore, for any $1 \leq i \leq d_{1}$ and $1 \leq u,v \leq d_{2}$ we get
\begin{eqnarray}
\mathbb{E}_{p_{G*}}\biggr\{\biggr(\dfrac{(\Delta \widehat{\mu}_{n})_{i}}{(\Delta \mu^{*})_{i}}-1\biggr)^{2}\biggr\} & \lesssim & \dfrac{\log^{2} n}{n(\lambda^{*})^{2}\left\{(\Delta \mu^{*})_{i}\right\}^{4}}, \nonumber \\
\mathbb{E}_{p_{G*}}\biggr\{\biggr(\dfrac{(\Delta \widehat{\Sigma}_{n})_{uv}}{(\Delta \Sigma^{*})_{uv}}-1\biggr)^{2}\biggr\} & \lesssim & \dfrac{\log^{2} n}{n(\lambda^{*})^{2}\left\{(\Delta \Sigma^{*})_{uv}\right\}^{4}}. \nonumber
\end{eqnarray}
It indicates that
$$\dfrac{\log n}{\sqrt{n}\lambda^{*}\min \limits_{1 \leq i \leq d_{1}, 1 \leq u,v \leq d_{2}}\left\{|(\Delta \mu^{*})_{i}|^{2},|(\Delta \Sigma^{*})_{i}|^{2}\right\}} \to 0$$ for the left-hand-side terms of the above display to go to 0 for all $1 \leq i \leq d_{1}$ and $1 \leq u,v \leq d_{2}$.

The results of Theorem \ref{theorem:convergence_rate_not_distinguishable_strong_identifiable} imply that as long as the kernel functions are strongly identifiable in the second order, the convergence rates of $\widehat{\mu}_{n}$ to $\mu^{*}$ and $\widehat{\Sigma}_{n}$ to $\Sigma^{*}$ are similar, which depend on the vanishing rate of $(\lambda^{*})^{2}\|(\Delta \mu^{*},\Delta \Sigma^{*})\|^{2}$ to 0. In our next result of location-covariance multivariate Gaussian distribution, we will demonstrate that such uniform convergence rates of different parameters no longer hold.
\begin{theorem} \label{theorem:convergence_rate_Gaussian}
{\bf (Weakly identifiable and non-distinguishable settings)}  
Assume that $f$ is a family of location-covariance multivariate Gaussian distributions, and $h_0(x) = f(x|\mu_0, \Sigma_0)$ for some $(\mu_0, \Sigma_0)\in \Theta \times \Sigma$. We define 
\begin{align}
\Xi_{2}(l_{n}) := \bigg\{G=(\lambda,\mu,\Sigma) \in \Xi: \dfrac{l_{n}}{\min \limits_{1 \leq i \leq d, 1 \leq u,v \leq d} \left\{|(\Delta \mu)_{i}|^{4},|(\Delta \Sigma)_{uv}|^{2}\right\}\sqrt{n}} \leq \lambda \bigg\}, \nonumber
\end{align}
for any sequence $\left\{l_{n}\right\}$. Then, the following holds:

(a) (Minimax lower bound) For any $r<1$ and sequence $\left\{l_{n}\right\}$, there exist two universal positive constants $c_{1}$ and $c_{2}$ such that
\begin{align}
\inf \limits_{\widehat{G}_{n} \in \Xi}\sup \limits_{G \in \Xi_{2}(l_{n})} \mathbb{E}_{p_{G}}\biggr(\lambda^{2}\left\{\|\Delta \mu\|^{4}+\|\Delta \Sigma\|^{2}\right\} \left\{\|\widehat{\mu}_{n}-\mu\|^{4}+\|\widehat{\Sigma}_{n}- \Sigma\|^{2}\right\}\biggr) \geq c_{1}n^{-1/r}, \nonumber 
\end{align}
\begin{align}
\inf \limits_{\widehat{G}_{n} \in \Xi} \sup \limits_{G \in \Xi_{2}(l_{n})} \mathbb{E}_{p_{G}}\biggr(\left\{\|\Delta \mu\|^{8}+\|\Delta \Sigma\|^{4} \right\}|\widehat{\lambda}-\lambda|^{2}\biggr) \geq c_{2}n^{-1/r}. \nonumber
\end{align}
(b) (MLE rate) Let $\widehat{G}_{n}$ be the estimator defined in \eqref{eqn:MLE}. Then, for any sequence $\left\{l_{n}\right\}$ such that $l_{n}/\log n \to \infty$ the following holds
\begin{align}
\sup \limits_{G_{*} \in \Xi_{2}(l_{n})} \mathbb{E}_{p_{G*}}\biggr((\lambda^{*})^{2}\left\{\|\Delta \mu^{*}\|^{4}+\|\Delta \Sigma^{*}\|^{2}\right\} \left\{\|\widehat{\mu}_{n}-\mu^{*}\|^{4}+\|\widehat{\Sigma}_{n}- \Sigma^{*}\|^{2}\right\}\biggr) \lesssim \dfrac{\log^{2} n}{n}, \nonumber 
\end{align}
\begin{align}
\sup \limits_{G_{*} \in \Xi_{2}(l_{n})} \mathbb{E}_{p_{G*}}\biggr(\left\{\|\Delta \mu^{*}\|^{8}+\|\Delta \Sigma^{*}\|^{4} \right\}|\widehat{\lambda}_{n}-\lambda^{*}|^{2} \biggr) \lesssim \dfrac{\log^{2} n}{n}. \nonumber
\end{align}
\end{theorem}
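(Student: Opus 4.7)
The plan follows the two-step template established in Theorems~\ref{theorem:convergence_rate_distinguishable_first_order} and~\ref{theorem:convergence_rate_not_distinguishable_strong_identifiable}. For part~(b) I will chain the Hellinger rate of Theorem~\ref{thm:density_estimation_rate} with the geometric inverse bound of Theorem~\ref{theorem:weakly_identifiable_Gaussian} to control $\mathcal{Q}(\widehat{G}_{n},G_{*})$, and then unpack $\mathcal{Q}$ into the two claimed inequalities. For part~(a) I will deploy Le~Cam's two-point method with a pair in $\Xi_{2}(l_{n})$ whose distinguishability is forced by a standard parametric $n^{-1/2}$ perturbation.

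For part~(b), Proposition~\ref{prop:entropy-number-calculation} together with the Gaussian example shows Assumption~A2 holds, so Theorem~\ref{thm:density_estimation_rate} gives $\mathbb{E}_{p_{G_{*}}}h(p_{\widehat{G}_{n}},p_{G_{*}})\lesssim\sqrt{\log n/n}$, and a standard peeling/concentration refinement upgrades this to $\mathbb{E}\,h^{2}\lesssim\log^{2}n/n$. Since $V\leq\sqrt{2}\,h$, Theorem~\ref{theorem:weakly_identifiable_Gaussian} then yields $\mathbb{E}\,\mathcal{Q}^{2}(\widehat{G}_{n},G_{*})\lesssim\log^{2}n/n$. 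The fourth summand of $\mathcal{Q}$ alone gives
\[
\mathcal{Q}(\widehat{G}_{n},G_{*})\;\geq\;\lambda^{*}(\|\Delta\mu^{*}\|^{2}+\|\Delta\Sigma^{*}\|)(\|\widehat{\mu}_{n}-\mu^{*}\|^{2}+\|\widehat{\Sigma}_{n}-\Sigma^{*}\|),
\]
so squaring and applying $(a+b)^{2}\geq a^{2}+b^{2}$ to each parenthetic factor delivers the first inequality of part~(b). For the bound on $|\widehat{\lambda}_{n}-\lambda^{*}|$, the first three summands of $\mathcal{Q}(\widehat{G}_{n},G_{*})$ collapse either to $|\widehat{\lambda}_{n}-\lambda^{*}|(\|\Delta\widehat{\mu}_{n}\|^{4}+\|\Delta\widehat{\Sigma}_{n}\|^{2})$ when $\widehat{\lambda}_{n}\geq\lambda^{*}$ or to $|\widehat{\lambda}_{n}-\lambda^{*}|(\|\Delta\mu^{*}\|^{4}+\|\Delta\Sigma^{*}\|^{2})$ otherwise. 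The defining inequality of $\Xi_{2}(l_{n})$ combined with $l_{n}/\log n\to\infty$ is exactly the threshold that, fed back into the first already-proved bound, forces componentwise consistency $(\widehat{\mu}_{n},\widehat{\Sigma}_{n})\to(\mu^{*},\Sigma^{*})$ and hence $\|\Delta\widehat{\mu}_{n}\|^{4}+\|\Delta\widehat{\Sigma}_{n}\|^{2}\asymp\|\Delta\mu^{*}\|^{4}+\|\Delta\Sigma^{*}\|^{2}$ asymptotically; this unifies the two cases and yields the second inequality after squaring, paralleling the consistency calculation displayed just after Theorem~\ref{theorem:convergence_rate_not_distinguishable_strong_identifiable}.

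For part~(a), an elementary two-point Le~Cam argument suffices because both claimed losses contain a quadratic-in-error factor ($\|\widehat{\Sigma}_{n}-\Sigma^{*}\|^{2}$ in the first, $|\widehat{\lambda}_{n}-\lambda^{*}|^{2}$ in the second) that is saturated at the standard parametric rate. For the first inequality I will fix $\bar\mu,\bar\Sigma$ at $O(1)$ distance from $(\mu_{0},\Sigma_{0})$ and a constant $\lambda\in(0,1)$ (so that every member of the two-point family lies in $\Xi_{2}(l_{n})$), and then compare $G_{0}=(\lambda,\bar\mu,\bar\Sigma)$ with $G_{1}=(\lambda,\bar\mu,\bar\Sigma+c(\lambda\sqrt{n})^{-1}E)$ for a unit direction $E$ and small $c>0$. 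A direct KL computation for the Gaussian deviated density gives $\mathrm{KL}(p_{G_{0}},p_{G_{1}})\lesssim c^{2}/n$, so Le~Cam's lemma yields $\inf_{\widehat\Sigma_{n}}\sup_{i}\mathbb{E}_{p_{G_{i}}}\|\widehat\Sigma_{n}-\Sigma^{*}\|^{2}\gtrsim (\lambda\sqrt{n})^{-2}$; multiplying by the $O(1)$ weight $\lambda^{2}(\|\Delta\mu^{*}\|^{4}+\|\Delta\Sigma^{*}\|^{2})$ produces a lower bound of order $n^{-1}$, which is $\geq c\,n^{-1/r}$ for every $r<1$. A parallel construction perturbing $\lambda$ by $c/\sqrt{n}$ with $(\mu^{*},\Sigma^{*})$ fixed gives the second inequality.

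The genuinely nontrivial step is the consistency argument in part~(b): upgrading the aggregate $L^{2}$ bound on $\mathcal{Q}(\widehat{G}_{n},G_{*})$ into a componentwise statement that permits interchanging $\|\Delta\widehat{\mu}_{n}\|,\|\Delta\widehat{\Sigma}_{n}\|$ with their starred counterparts. The mismatched powers (fourth in $\mu$ versus second in $\Sigma$) inherited from the heat-equation structure of Theorem~\ref{theorem:weakly_identifiable_Gaussian} make this verification more delicate than in its strongly-identifiable analog (Theorem~\ref{theorem:convergence_rate_not_distinguishable_strong_identifiable}); the threshold inside $\Xi_{2}(l_{n})$ is calibrated precisely to make this inversion go through. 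The minimax lower bound, by contrast, is routine.
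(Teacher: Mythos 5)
Your proposal is correct and follows the same two-step architecture as the paper's proof: chain the Hellinger rate of Theorem~\ref{thm:density_estimation_rate} with the inverse bound $\mathcal{Q}$ of Theorem~\ref{theorem:weakly_identifiable_Gaussian}, decompose $\mathcal{Q}$ into the pieces that control each loss, and invoke the $\Xi_{2}(l_{n})$ threshold to do the $(\Delta\widehat{\mu}_{n},\Delta\widehat{\Sigma}_{n})\leftrightarrow(\Delta\mu^{*},\Delta\Sigma^{*})$ swap. The decomposition you use for part~(b) differs cosmetically from the paper's: you extract the first inequality from the fourth summand of $\mathcal{Q}$ after checking the first three are nonnegative, and collapse the first three via $\min\{\lambda,\lambda^{*}\}$ casework, whereas the paper rewrites $\mathcal{Q}\asymp\overline{\mathcal{Q}}$ and reads off the two factors. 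These are equivalent, and your casework is slightly more explicit. For the swap, you gesture at the calculation displayed after Theorem~\ref{theorem:convergence_rate_not_distinguishable_strong_identifiable}; the paper carries it out via a reverse-triangle/AM--GM manipulation in $L^2$, and your summary matches that intent. One small precision you should add when writing this out: to conclude $\mathbb{E}\,\mathcal{Q}^2\lesssim\log^2 n/n$ you really want the sub-Gaussian tail $\Pbb(h>\delta)\lesssim\exp(-n\delta^2/C)$ from Step~2 of the proof of Theorem~\ref{thm:density_estimation_rate}, not just $\mathbb{E} h\lesssim\sqrt{\log n/n}$, since squaring a first-moment bound goes the wrong way via Jensen; the tail bound gives $\mathbb{E} h^2\lesssim\log n/n$, which is more than enough.

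For part~(a), you diverge somewhat from the paper. The paper defines non-symmetric ``distances'' $d_{5},d_{6}$ (verifying $d_{5}$ satisfies only a weakened triangle inequality), then constructs $n$-dependent sequences $G_{1,n},G_{2,n}\in\Xi_{2}(l_{n})$ calibrated so that $h(p_{G_{1,n}},p_{G_{2,n}})/d_{i}^{r}(G_{1,n},G_{2,n})\to 0$, in the style of Proposition~\ref{proposition:minimax_strong_distinguishable}. You instead fix $\lambda$, $\bar\mu$, $\bar\Sigma$ at macroscopic distance from $(\mu_0,\Sigma_0)$ and perturb a single coordinate by $\Theta(n^{-1/2})$, computing a direct KL bound. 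Your construction is more elementary and in fact yields a slightly \emph{stronger} lower bound of order $n^{-1}$ (for each fixed, admissible $l_{n}$), which dominates $n^{-1/r}$ for every $r<1$; the paper's construction gives $n^{-1/r}$ exactly. The only thing you should be explicit about is verifying that both members of your two-point family actually lie in $\Xi_{2}(l_{n})$: with all components of $\Delta\bar\mu,\Delta\bar\Sigma$ of order one and a fixed $\lambda$, this holds for all $n$ large provided $l_{n}/\sqrt{n}\to 0$, which is the only regime where the theorem has content anyway. Both routes are valid; yours buys simplicity and a tighter exponent, while the paper's sequence construction is designed to track the $r$-dependence uniformly.
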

Proof of Theorem~\ref{theorem:convergence_rate_Gaussian} is in Appendix~\ref{subsec:proof:theorem:convergence_rate_Gaussian}. A few comments are in order:

\textbf{(i)} Similar to the argument after Theorem \ref{theorem:convergence_rate_not_distinguishable_strong_identifiable}, the condition regarding $\lambda$ in the formulation of $\Xi_{2}(l_{n})$ is to guarantee that $(\widehat{\mu}_{n}, \widehat{\Sigma}_{n})$ and $\widehat{\lambda}_{n}$ are consistent estimators of $(\mu^{*},\Sigma^{*})$ and $\lambda^{*}$, respectively. 

\noindent
\textbf{(ii)} The results of part (b) indicate that the convergence rate of estimating $\Sigma^{*}$ is generally much faster than that of estimating $\mu^{*}$ regardless of the circumstance of $(\lambda^{*})^{2}\left\{\|\Delta \mu^{*}\|^{4}+\|\Delta \Sigma^{*}\|^{2}\right\}$. The non-uniformity of these convergence rates is mainly due to the structure of the heat partial differential equation, where the second-order derivative of the location parameter and the first-order derivative of covariance parameter correlate.

\noindent
\textbf{(iii)} From the results of part (b), it is clear that when $\|\Delta \mu^{*}\|+\|\Delta \Sigma^{*}\| \not \to 0$, i.e., $(\mu^{*},\Sigma^{*}) \to (\overline{\mu},\overline{\Sigma}) \neq  (\mu_{0},\Sigma_{0})$, and $\lambda^{*} \not  \to 0$, the convergence rate of $\widehat{\lambda}_{n}$ to $\lambda^{*}$ is $n^{-1/2}$. Furthermore, by using the result from part (a) of Proposition \ref{proposition:Gaussian_model} we can verify that
\begin{align}
\sup \limits_{G_{*}} \mathbb{E}_{p_{G*}}\biggr((\lambda^{*})^{2}\left\{\|\widehat{\mu}_{n}-\mu^{*}\|^{2}+\|\widehat{\Sigma}_{n}-\Sigma^{*}\|^{2}\right\}\biggr) \lesssim \dfrac{\log^{2} n}{n}, \nonumber
\end{align}
where the supremum is taken over $\{G_{*} \in \Xi_{2}(l_{n}): \mathcal{K}(G_{*},\overline{G}) \leq \epsilon\}$, and $\overline{G}=(\overline{\lambda},\overline{\mu},\overline{\Sigma})$, $\lambda^{*} \to \overline{\lambda}$, and $\epsilon$ is some sufficiently small positive constant. Since $\overline{\lambda} \neq 0$, we achieve the optimal convergence rate $n^{-1/2}$ of estimating $(\mu^{*},\Sigma^{*})$ within a sufficiently small neighborhood of $\overline{G}$ under metric $\mathcal{K}$. These results imply that even though the convergence rate of estimating $G_{*}$ may be extremely slow when $G_{*}$ moves over the whole space $\Xi_{2}(l_{n})$ (global convergence), such convergence rate can be at standard rate $n^{-1/2}$ when $G_{*}$ moves within a sufficiently small neighborhood of some appropriate parameters $\overline{G}$ (local convergence).

As we have seen from the convergence rate results from location-covariance multivariate Gaussian distributions, the heat PDE structure  plays a key role in the slow convergence rates of location and covariance parameters as well as the mismatch of orders of these rates.
\section{Convergence Rate of Density Estimation} \label{Section:density} 
\subsection{General Theory and Proof of Theorem~\ref{thm:density_estimation_rate}}
We now describe the convergence rate of density estimation under the Hellinger distance in detail and give a general result for the \modelname. We recall some popular notions in Empirical Process theory as follows. An $\epsilon-$net for a metric space $(\Pcal, d)$ is a collection of balls with radius $\epsilon$ 
(with respect to metric $d$) having union contains $\Pcal$. The minimal cardinality of such $\epsilon-$nets is called the covering number and denoted by $N(\epsilon, \Pcal, d)$. The logarithm of $N(\epsilon, \Pcal, d)$ is called the entropy number and is denoted by $H(\epsilon, \Pcal, d)$. The bracketing number $N_B(\epsilon, \Pcal, d)$ is the minimal number $n$ such that there exists $n$ pairs $(\underline{f}_i, \overline{f}_i)_{i=1}^{n}$ such that $\underline{f}_i < \overline{f}_i, d(\underline{f}_i, \overline{f}_i) < \epsilon$, and their union covers $\Pcal$. The logarithm of $N_B(\epsilon, \Pcal, d)$ is called the bracketing entropy number and is denoted by $H_B(\epsilon, \Pcal, d)$. In the following discussion, if $\Pcal$ is a family of density and we omit $d$, we understand that $d$ is the distance associated with $L^2(m)$, where $m$ is the Lebesgue measure.  

Denote $\Pcal(\Xi) = \{p_{G}: G\in \Xi\}$ and $\overline{\Pcal}(\Xi) = \{(p_{G_*} + p_{G})/2 : G\in \Xi\}$ for the fixed true parameter $G_*$. The convergence rate can be deduced from the complexity of the set: 
\begin{equation}
\overline{\mathcal{P}}^{1/2}(\Xi,\epsilon)  
 = \left\{\bar p_{ G}^{1/2}  : G \in \Xi, ~ h(\bar p_{ G}, p_{G_*}) \leq \epsilon\right\},
\end{equation}
where for any $G \in \Xi$, 
we denote $\bar{p}_{ G} := (p_{G} + p_{G_*})/2$. 
We measure the complexity of this class
through the bracketing entropy integral
\begin{equation}   
\label{eq:bracketing_integral} 
\mathcal{J}_B(\epsilon, \overline{\mathcal{P}}^{1/2}(\Xi,\epsilon))
= \int_{\epsilon^2/2^{13}}^{\epsilon} H^{1/2}_B(u, \overline{\mathcal{P}}^{1/2}(\Xi,\epsilon))du \vee \epsilon,
\end{equation}
where $H_B(\epsilon, \mathcal{P})$ denotes
the $\epsilon$-bracketing entropy number
of a metric space $\mathcal{P}$. 
We recall assumption A2: 
\begin{enumerate}   
\item[A2.]
Given a universal constant $J > 0$,
there exists  $N > 0$, possibly
depending on $\Theta$ and $k$, such that
for all $n \geq N$ and all $\epsilon > (\log n/n)^{1/2}$,
$$\mathcal{J}_{B}(\epsilon, \overline{\mathcal{P}}^{1/2}(\Xi,\epsilon)) \leq J \sqrt n \epsilon^2.$$
\end{enumerate}
\begin{theorem}
Assume that Assumption A2 holds, and let $k \geq 1$. Then, there exists a constant $C > 0$ depending
only on $\Theta$ and $k$ such that for all $n \geq 1$, 
$$
\sup_{G_{*} \in \Xi} \mathbb{E}_{p_{G_*}} h(p_{\widehat{G}_n}, p_{G_*})  
\leq  C\sqrt{\log n/n}.$$
\end{theorem}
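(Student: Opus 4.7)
The plan is to carry out the standard Wong--Shen / van de Geer machinery for MLE convergence rates under bracketing entropy conditions, while being careful to keep all constants uniform in $G_* \in \Xi$. The starting point is the defining inequality of the MLE, which gives $P_n \log(p_{\widehat G_n}/p_{G_*}) \geq 0$ for the empirical measure $P_n$. Since the log-ratio is not bounded above in general, I would switch to the averaged density $\bar p_G := (p_G + p_{G_*})/2$: the elementary AM--GM consequence $\log((1+t)/2) \geq \tfrac{1}{2}\log t$ for $t>0$ upgrades the MLE inequality to $P_n \log(\bar p_{\widehat G_n}/p_{G_*}) \geq 0$, and now $\bar p_G/p_{G_*} \geq 1/2$, so the transformed log-ratio has well-controlled (sub-exponential) tails under $P_{G_*}$.

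Next, I would link the empirical process to the Hellinger distance via the Jensen-type inequality $-E_{G_*}\log(\bar p_G/p_{G_*}) \geq 2\, h^2(\bar p_G, p_{G_*})$, which follows from $E_{G_*}\sqrt{\bar p_G/p_{G_*}} = 1 - h^2(\bar p_G, p_{G_*})$ and concavity of $\log$. Subtracting this from the empirical inequality above yields the key estimate
$$2\, h^2(\bar p_{\widehat G_n}, p_{G_*}) \,\leq\, (P_n - P_{G_*})\log(\bar p_{\widehat G_n}/p_{G_*}).$$
Controlling the Hellinger distance thus reduces to controlling the supremum of a centered empirical process indexed by bounded transforms of $\bar p_G$. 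A standard change of variable (replacing the log-ratio by $\sqrt{\bar p_G/p_{G_*}}-1$, or directly by $\sqrt{\bar p_G}$) makes the relevant index class exactly $\overline{\mathcal{P}}^{1/2}(\Xi,\epsilon)$, whose bracketing numbers are precisely what feed into $\mathcal{J}_B$.

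The core technical step is a peeling/chaining argument: partition $\{G \in \Xi : h(\bar p_G, p_{G_*}) \leq \delta\}$ into shells $\{2^{j-1}\epsilon < h(\bar p_G, p_{G_*}) \leq 2^j \epsilon\}$ and, on each shell, apply a Bernstein-type maximal inequality (e.g., Lemma 4.2 and Theorem 5.11 of \cite{geer2000empirical}) whose sub-Gaussian scale is governed by $\mathcal{J}_B(\cdot, \overline{\mathcal{P}}^{1/2}(\Xi, \cdot))$. Assumption A2 makes the fixed-point condition $\mathcal{J}_B(\epsilon, \overline{\mathcal{P}}^{1/2}(\Xi, \epsilon)) \leq J\sqrt n\, \epsilon^2$ hold for all $\epsilon \geq \sqrt{\log n/n}$, so the peeling yields the tail bound
$$P_{G_*}\bigl(h(\bar p_{\widehat G_n}, p_{G_*}) > \epsilon\bigr) \,\leq\, c_1 \exp(-c_2 n \epsilon^2),$$
with $c_1, c_2$ depending only on $J$. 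Choosing $\epsilon_n = C\sqrt{\log n/n}$ for $C$ large enough and integrating the tail gives $E_{G_*} h(\bar p_{\widehat G_n}, p_{G_*}) \lesssim \sqrt{\log n/n}$, uniformly in $G_* \in \Xi$ because none of the constants depend on $G_*$. The universal comparison $h^2(p, q) \leq C'\, h^2((p+q)/2, q)$ then transfers the bound from $\bar p_{\widehat G_n}$ to $p_{\widehat G_n}$, completing the proof. The main obstacle I expect is the peeling/chaining step itself: matching a Bernstein-type maximal inequality to Assumption A2 shell-by-shell and tracking constants carefully enough to preserve uniformity over $G_* \in \Xi$. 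The remaining ingredients (AM--GM, Jensen, and the Hellinger comparison) are routine.
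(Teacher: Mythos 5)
Your proposal follows essentially the same route as the paper's proof, which applies the standard van de Geer MLE-rate machinery: the AM--GM and Jensen steps you re-derive are precisely the content of Lemmas 4.1--4.2 of \cite{geer2000empirical}, the shell-peeling plus Theorem 5.11 argument matches the paper's Steps 1--2, and the tail-to-expectation integration together with the comparison $\frac{1}{16}h^2(p_{\widehat{G}_n}, p_{G_*}) \leq h^2(\bar{p}_{\widehat{G}_n}, p_{G_*})$ is the paper's Step 3. Since the constants emerging from Theorem 5.11 depend only on $J$ and not on $G_*$, the uniformity over $G_* \in \Xi$ carries through exactly as you anticipate.
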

This result can be obtained by modifying the proof of Theorem 7.4 in \cite{geer2000empirical}. Recall that we defined the function class 
\begin{equation}
\overline{\mathcal{P}}^{1/2}(\Xi,\epsilon)  
 = \left\{\bar p_{ G}^{1/2}  : G \in \Xi, ~ h(\bar p_{ G}, p_{ G_*}) \leq \epsilon\right\},
\end{equation}
where for any $G \in \Xi$, 
we write $\bar{p}_{G} = (p_{G} + p_{G_*})/2$, 
and measure the complexity of this class
through the bracketing entropy integral
\begin{equation*}   
\mathcal{J}_B(\epsilon, \overline{\mathcal{P}}^{1/2}(\Xi,\epsilon), \nu)
= \int_{\epsilon^2/2^{13}}^{\epsilon} \sqrt{\log N_B(u, \overline{\mathcal{P}}^{1/2}(\Xi,u), \nu)}du \vee \epsilon,
 \end{equation*}
where $N_B(\epsilon, X, \eta)$ denotes
the $\epsilon$-bracketing number
of a metric space $(X,\eta)$ and $\nu$ is the Lebesgue measure. We denote by $P_{G}$ the distribution corresponding to the density $p_{G}$. The technique to prove this theorem is to bound the convergence rate by the increments of an empirical process: 
$$\nu_n(G) = \sqrt{n} \int_{\{p_{G_*} > 0\}} \dfrac{1}{2} \log \dfrac{\overline{p}_{G}}{p_{G_*}} d(P_n - P_{ G_*}),$$ 
where $P_n = \frac{1}{n}\sum_{i=1}^{n} \delta_{X_i}$ is the empirical measure ($X_1, \dots, X_n \overset{iid}{\sim} p_{ G_*}$).
We first recall Theorem 5.11 in \cite{geer2000empirical} with the notations adapted from our setting:
\begin{theorem}\label{thm:bound-tail-uniform}
    Let $R >0$, $k\geq 1$, and $\mathcal{G}$ be a subset of $\Xi$, which contains $G_*$. Given $C_1 < \infty$, for all $C$ sufficiently large, and for $n\in \mathbb{N}$ and $t>0$ satisfying
    \begin{equation}\label{eq:cond-t-leq}
        t \leq \sqrt{n}((8R) \wedge (C_1 R^2)),
    \end{equation}
    and
    \begin{equation}\label{eq:cond-t-geq}
        t\geq C^2(C_1+1)\left(R \vee \int_{t/(2^6\sqrt{n})}^{R} H_{B}^{1/2}\left(\frac{u}{\sqrt{2}}, \overline{\mathcal{P}}^{1/2}(\Xi, R), \nu\right)du\right),
    \end{equation}
    we have
    \begin{equation}
        \mathbb{P}_{\lambda^* G_*} \left(\sup_{G\in \mathcal{G}, h(\overline{p}_{G}, p_{G_*})\leq R} |\nu_{n}(G)| \geq t\right) \leq C \exp\left(-\dfrac{t^2}{C^2(C_1+1)R^2}\right).
    \end{equation}
\end{theorem}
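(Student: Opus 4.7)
The plan is to recognize the statement as van de Geer's Theorem~5.11 (\cite{geer2000empirical}) applied to the empirical process $\nu_n(G)$, so the real task is to verify that the bracketing-entropy assumption on $\overline{\mathcal{P}}^{1/2}(\Xi,R)$ translates into a Bernstein-norm bracketing-entropy bound on the log-likelihood ratio class $\mathcal{F}_R := \{\tfrac{1}{2}\log(\bar p_{G}/p_{G_*}): G\in\mathcal{G},\, h(\bar p_G,p_{G_*}) \leq R\}$. To set up the chaining, first collect the two deterministic inputs: because $\bar p_G = (p_G+p_{G_*})/2 \geq p_{G_*}/2$ on $\{p_{G_*}>0\}$, every $f \in \mathcal{F}_R$ is uniformly bounded below by $-\tfrac{1}{2}\log 2$, and a short computation using $|\tfrac{1}{2}\log x| \leq |\sqrt{x}-1|$ for $x \geq 1/2$ yields $P_{G_*} f^2 \lesssim h^2(\bar p_G,p_{G_*}) \leq R^2$. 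This is precisely the Bernstein-norm control required for the chaining bound on bounded empirical processes.

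Next I would convert $L^2(\nu)$-brackets for $\sqrt{\bar p_G}$ into Bernstein-norm brackets for elements of $\mathcal{F}_R$. Given an $L^2$ bracket $[\underline{u}_j,\overline{u}_j]$ of width $\epsilon$ for $\sqrt{\bar p_G}$, pair it with the log-ratio bracket $\bigl[\tfrac{1}{2}\log(\underline{u}_j^2/p_{G_*}),\, \tfrac{1}{2}\log(\overline{u}_j^2/p_{G_*})\bigr]$. Using $\bar p_G \geq p_{G_*}/2$ once more, the Bernstein-norm width of this bracket is at most a universal multiple of $\epsilon$, so the bracketing numbers of $\mathcal{F}_R$ are dominated by those of $\overline{\mathcal{P}}^{1/2}(\Xi,R)$, and hence the entropy integral defining $\mathcal{J}_B(\epsilon,\overline{\mathcal{P}}^{1/2}(\Xi,R))$ controls the entropy integral of the log class up to a constant.

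With these ingredients, the proof proceeds by a standard generic chaining argument at dyadic scales $u_k = R\, 2^{-k}$, from the coarsest level down to scale $\asymp t/\sqrt{n}$. At each level, Bernstein's inequality applied to the bracket increments, which are bounded by $u_k$ and have variance at most $u_k^2$, yields sub-exponential tails of the form $\exp(-c\, t_k^2/u_k^2)$; choosing a telescoping budget $t_k \propto t\cdot 2^{-k/2}$ and union-bounding over the $\exp H_B(u_k,\cdot)$ brackets at level $k$ reproduces exactly condition~(\ref{eq:cond-t-geq}) on the right-hand side of the chaining bound. The upper restriction~(\ref{eq:cond-t-leq}) ensures $t/\sqrt{n} \lesssim R$, keeping Bernstein's inequality in its sub-Gaussian regime and preventing the exponential tail from degenerating.

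The main obstacle, and the reason the constants $C_1$ and $C$ both appear in the statement, is the careful bookkeeping through the chain: one must track the Bernstein constants from level to level and absorb them into a single sub-Gaussian factor, while simultaneously handling the fact that $f \in \mathcal{F}_R$ is only one-sidedly bounded. The one-sided bound means the positive deviations of the log-ratio cannot be controlled by an $L^\infty$ estimate alone; instead the $L^2(P_{G_*})$ bound via Hellinger distance is what tames the upper tail, and it is this step that makes the resulting tail depend on $R$ rather than on a sup-norm of the class.
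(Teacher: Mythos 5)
The paper does not prove this statement --- it is explicitly \emph{recalled} as Theorem~5.11 of \cite{geer2000empirical} with notation matched to the present setting, so there is no in-paper proof to compare your sketch against. Your proposal correctly identifies the source and gives a faithful outline of the argument van de Geer uses there: control of the log-ratio class $\{\tfrac12\log(\overline{p}_{G}/p_{G_*})\}$ in Bernstein norm via the uniform lower bound $\overline{p}_{G}\geq p_{G_*}/2$ and a Lipschitz comparison of $\log$ with $\sqrt{\cdot}$, transfer of $L^2(\nu)$ brackets on $\overline{\mathcal{P}}^{1/2}(\Xi,R)$ to Bernstein-norm brackets on the log-ratio class, and dyadic chaining with Bernstein's inequality at each scale, with \eqref{eq:cond-t-leq} keeping the tails sub-Gaussian and \eqref{eq:cond-t-geq} paying the entropy union bound. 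The one concrete slip is the claimed inequality $|\tfrac12\log x|\leq|\sqrt{x}-1|$ for $x\geq 1/2$: it fails for $x$ slightly below $1$ (at $x=1/2$ the left side is about $0.347$ while the right side is about $0.293$); the correct form carries an absolute constant, $|\tfrac12\log x|\leq C|\sqrt{x}-1|$ for $x\geq 1/2$ with $C$ close to $1.2$, which is all your $P_{G_*}f^2\lesssim h^2$ conclusion actually needs.
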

Now we proceed to prove Theorem~\ref{thm:density_estimation_rate}, the proof is divided into three parts: Bounding the tail probability of $h(p_{ \hat{G}_n}, p_{ G_*})$ by sums of empirical processes increments using the chaining technique, bounding the empirical processes increments using Theorem~\ref{thm:bound-tail-uniform}, and bounding the expectation of $h(p_{ \hat{G}_n}, p_{G_*})$ using its tail probability. 

\paragraph{Step 1 (Bounding the tail probability $h(p_{ \hat{G}_n}, p_{ G_*})$ by sums of empirical processes increments):} Firstly, by Lemma 4.1 and 4.2 of \cite{geer2000empirical}, we have 
$$\dfrac{1}{16}h^2(p_{ \hat{G}_n}, p_{ G_*}) \leq h^2(\overline{p}_{\hat{G}_n}, p_{ G_*})\leq \dfrac{1}{\sqrt{n}} \nu_n(\hat{G}_n).$$
Hence, for any $\delta > \delta_n := (\log n / n)^{1/2}$, we have
\begin{align*}
    \Pbb_{G_*}(h(p_{ \hat{G}_n}, p_{ G_*})\geq \delta) & \leq \Pbb_{G_*}\bigg( \nu_n(\hat{\lambda}_n \hat{G}_n) - \sqrt{n} h^2(\overline{p}_{ \hat{G}_n}, p_{G_*})\geq 0,\\
    & \hspace{5cm} h(\overline{p}_{\hat{G}_n}, p_{G_*}) \geq \delta / 4 \bigg)\\
    & \leq \Pbb_{G_*}\left(\sup_{G: h(\overline{p}_{G}, p_{ G_*}) \geq \delta / 4} [ \nu_n(G) - \sqrt{n} h^2 (\overline{p}_{G}, p_{G_*})] \geq 0 \right)\\
    & \leq \sum_{s=0}^{S} \Pbb_{G_*}\left(\sup_{G: 2^{s}\delta/4\leq h(\overline{p}_{G}, p_{G_*})\leq 2^{s+1}\delta/4} | \nu_n(G)| \geq \sqrt{n}2^{2s} (\delta/4)^{2} \right)\\
    &\leq \sum_{s=0}^{S} \Pbb_{G_*}\left(\sup_{G:  h(\overline{p}_{G}, p_{G_*})\leq 2^{s+1}\delta/4} | \nu_n(G)| \geq \sqrt{n}2^{2s} (\delta/4)^{2} \right),
\end{align*}
where $S$ is a smallest number such that $2^S \delta /4 > 1$, as $h(\overline{p}_{G}, p_{G_*}) \leq 1$. Now we will bound each term above using Theorem~\ref{thm:bound-tail-uniform}.
\paragraph{Step 2 (Bounding the empirical processes increments using Theorem~\ref{thm:bound-tail-uniform}):} In Theorem~\ref{thm:bound-tail-uniform}, choose $R = 2^{s+1}\delta, C_1=15$ and $t = \sqrt{n} 2^{2s} (\delta/4)^2$, we can readily check that Condition~\eqref{eq:cond-t-leq} satisfies (because $2^{s-1}\delta/4\leq 1$ for all $s=0, \dots, S$). Condition~\eqref{eq:cond-t-geq} satisfies thanks to Assumption A3:
\begin{align*}
    \int_{t/(2^6\sqrt{n})}^{R} H_B^{1/2}\left(\dfrac{u}{\sqrt{2}}, \Pcal^{1/2}(\Xi, R), \nu\right) du \vee 2^{s+1} \delta  & = \sqrt{2}\int_{R^2/2^{13}}^{R/\sqrt{2}} H_B^{1/2}\left(u, \mathcal{P}^{1/2}(\Xi, R), \nu\right) du \vee 2^{s+1} \delta\\
    & \leq 2 \mathcal{J}_B(R, \mathcal{P}^{1/2}(\Xi, R), \nu)\\
    &\leq 2J \sqrt{n}2^{2s+1} \delta^2 = 2^{6} J t.
\end{align*}
So the conclusion of Theorem~\ref{thm:bound-tail-uniform} gives us 
\begin{equation}
    \Pbb_{G_*}(h(p_{ \hat{G}_n}, p_{G_*}) > \delta) \leq C\sum_{s=0}^{\infty} \exp\left(\dfrac{2^{2s}n\delta^2}{J^2 2^{14}}\right) \leq c\exp\left(\dfrac{n\delta^2}{c^2}\right),
\end{equation}
where $c$ is a large constants that does not depend on $G_*$. 
\paragraph{Step 3 (Implying the bound on supremum of expectation):}
Thus, we have
\begin{equation*}
    \mathbb{E} h(p_{ \hat{G}_n}, p_{G_*}) = \int_0^{\infty} \mathbb{P}(h(p_{ \hat{G}_n}, p_{G_*}) >\delta) d\delta \leq \delta_n + c\int_{\delta_n}^{\infty} \exp\left(-\dfrac{n\delta^2}{c^2} \right)\leq \tilde{c} \delta_n,
\end{equation*}
for some $\tilde{c}$ does not depend on $\lambda^*, G_*$. Hence, we finally proved that
\begin{equation*}
\sup_{G_{*} \in \Xi} \mathbb{E}_{ G_*} h(p_{\widehat{G}_n}, p_{G_*})  
\leq  C\sqrt{\log n/n}.
\end{equation*}
As a consequence, we obtain the conclusion of the theorem.

\subsection{Proof for Proposition~\ref{prop:entropy-number-calculation}}
We further introduce some more notations that are required for the proof. Let $N(\epsilon, \Pcal(\Xi), \|\cdot\|_{\infty})$ be the $\epsilon-$covering number of $(\Pcal(\Xi), \|\cdot\|_{\infty})$ and $N_B(\epsilon, \mathcal{P}(\Xi), h)$ be the bracketing number of $\mathcal{P}(\Xi)$ measured by Hellinger metric $h$. $H_B(\epsilon, \mathcal{P}(\Xi), h) = \log  N_B(\epsilon, \mathcal{P}(\Xi), h)$ is called the bracketing entropy of $\mathcal{P}(\Xi)$ under metric $h$.
We want to show that
\begin{equation}\label{eq:entropy-integral-bound}
    \mathcal{J}_B(\epsilon, \overline{\Pcal}^{1/2}(\Xi, \epsilon), L^2(m)) = \left(\int_{\epsilon^2/2^{2^{13}}}^{\epsilon} H_B^{1/2}(\delta, \overline{\Pcal}^{1/2}(\Xi, \delta), L^2(m)) d\delta \vee \delta\right) \lesssim \sqrt{n} \epsilon^2,
\end{equation}
for all $n > N$ large enough and $\epsilon > (\log n / n)^{1/2}$. We proceed to show that claim~\eqref{eq:entropy-integral-bound} will be proved if 
\begin{equation}\label{eq:covering-supnorm-bound}
    \log N (\epsilon, \Pcal(\Xi), \norm{\cdot}_{\infty}) \lesssim \log(1/\epsilon),
\end{equation}
\begin{equation}\label{eq:entropy-hellinger-bound}
    H_B (\epsilon, \Pcal(\Xi), h) \lesssim \log(1/\epsilon),
\end{equation}
and then prove claim~\eqref{eq:covering-supnorm-bound} and~\eqref{eq:entropy-hellinger-bound}. 

\paragraph{Proof of that claim~\eqref{eq:entropy-hellinger-bound} implies claim~\eqref{eq:entropy-integral-bound}} Because $\overline{\mathcal{P}}^{1/2}(\Xi, \delta) \subset \overline{\mathcal{P}}^{1/2}(\Xi)$ and from the definition of Hellinger distance, 
\begin{equation*}
    H_B(\delta, \overline{\mathcal{P}}^{1/2}(\Xi, \delta), \mu) \leq H_B(\delta, \overline{\mathcal{P}}^{1/2}(\Xi), \mu) = H_B(\frac{\delta}{\sqrt{2}}, \overline{\mathcal{P}}(\Xi), h).
\end{equation*}
Now use the fact that for densities $f_*, f_1, f_2$, we have $h^2((f_1 + f_*)/2, (f_2 + f_*)/2)\leq h^2(f_1, f_2)/2$, one can readily check that $H_B(\frac{\delta}{\sqrt{2}}, \overline{\mathcal{P}}(\Xi), h)\leq H_B(\delta, \mathcal{P}(\Xi), h)$. Hence, if claim~\eqref{eq:entropy-hellinger-bound} holds true, then 
\begin{equation*}
    H_B(\delta, \overline{\mathcal{P}}^{1/2}(\Xi, \delta), \mu) \leq H_B(\delta, \mathcal{P}(\Xi), h) \lesssim \log (1/\delta),
\end{equation*}
which implies that
\begin{equation*}
    \mathcal{J}_B(\epsilon, \overline{\mathcal{P}}^{1/2}(\Xi, \delta), \mu)\lesssim \epsilon (\log(2^{13}/\epsilon^2 ))^{1/2} < n \epsilon^2,
\end{equation*}
for all $\epsilon > (\log n / n)^{1/2}$. Hence, claim~\eqref{eq:entropy-integral-bound} is proved. 
\paragraph{Proof of claim~\eqref{eq:covering-supnorm-bound}} As $\lambda \in [0,1]$, we can choose an $\epsilon-$net for it with the cardinality no more than $\dfrac{1}{\epsilon}$. Similarly, because $\Theta$ and $\Omega$ are compact, we can cover them by hypercube $[-a, a]^{d_1}$ and $[-b, b]^{d_2\times d_2}$. Hence, there exists $\epsilon-$nets for them with the cardinality no more than $\left(\dfrac{2a}{\epsilon}\right)^{d_1}$ and $\left(\dfrac{2b}{\epsilon}\right)^{d_2^2}$. Let $\mathcal{S}$ be the Cartesian product of them. We have $\log |\mathcal{S}|\lesssim \log(1/\epsilon)$ and for every $G = (\lambda, \mu, \Sigma)\in \Xi$, there exists $G' = (\lambda', \mu', \Sigma')\in \mathcal{S}$ such that $|\lambda- \lambda'|, \norm{\mu - \mu'}, \norm{\Sigma - \Sigma'} \leq \epsilon$. By triangle inequalities,
$$\norm{p_G - p_{G'}}_{\infty} \leq |\lambda - \lambda'| (\norm{h_0}_{\infty} + \norm{f}_{\infty})+ \lambda |f(x|\mu, \Sigma) - f(x|\mu', \Sigma')|\lesssim \epsilon,$$
thanks to the uniform bounded and Lipchitz assumptions. Hence,
\begin{equation*}
    \log N(\epsilon, \mathcal{P}(\Xi), \norm{\cdot}_{\infty}) \lesssim \log(1/\epsilon).
\end{equation*} 

\paragraph{Proof of claim~\eqref{eq:entropy-hellinger-bound}} Now, from the entropy number, we are going to bound the bracketing number, we let $\eta \leq \epsilon$ which will be chosen later. Let $f_1, \dots, f_N$ be a $\eta$-net for $\Pcal(\Xi)$, where $f_i(x) = (1-\lambda_i)h_0(x) + \lambda_i f(x|\mu_i, \Sigma_i)$. Let
\begin{equation}
    H(x) = \begin{cases}
    b_1\exp(-b_2\norm{x}^{b_3}), & \quad \enorm{x} \geq B_1,\\
    M , &\quad \text{otherwise}
    \end{cases}
\end{equation}
is an envelop for $f(x|\mu, \Sigma)$. We can construct brackets $[p_i^L, p_i^U]$ as follows. 
\begin{align*}
p_i^L(x) & = (1 - \lambda_i) h_0(x) + \lambda_i\max\{f(x|\mu_i, \Sigma_i) - \eta, 0 \},\\
p_i^U(x) & = (1-\lambda_i) h_0(x) + \lambda_i \min\{f(x|\mu_i, \Sigma_i) + \eta, H(x) \}.
\end{align*}
Because for each $f\in \Pcal(\Xi)$, there is $f_i$ such that $\norm{f - f_i}_{\infty} < \eta$, we have $ p_i^{L}\leq f\leq p_i^{U}$. Moreover, for any $\overline{B}\geq B$,
\begin{align}
    \int_{\mathbb{R}^{d}} (p_i^{U} - p_i^{L}) d\mu & \leq \lambda_i \left(\int_{\norm{x} \leq \overline{B}} 2\eta dx + \int_{\norm{x} \geq \overline{B}} H(x) dx\right) \nonumber \\
    &\lesssim \eta \overline{B}^{d} + \overline{B}^{d}\exp\left( - b_2\overline{B}^{b_3}\right)\label{bracket-radi},
\end{align}
where we use spherical coordinate to have
\begin{equation*}
	 \int_{\norm{x} \leq \overline{B}}  dx = \dfrac{\pi^{d/2}}{\Gamma(d/2 + 1)} \overline{B}^d \lesssim \overline{B}^d,
\end{equation*}
and
\begin{align*}
	\int_{\norm{x} \geq \overline{B}}\exp\left(-b_2\norm{x}^{b_3} \right) & \lesssim \int_{r \geq \overline{B}} r^{d-1} \exp \left(-b_2r^{b_3} \right) dr \\
	& = \dfrac{1}{b_3 b_2^{1/b_3}}\int_{\overline{B}^{b_3}}^{\infty} u^{d/b_3 - 1} \exp(-u) du \quad (\text{change of variable }u = b_2 r^{b_3})\\
	& \leq \dfrac{1}{b_3 b_2^{1/b_3}} \overline{B}^{d-b_3} \exp(-\overline{B}^{b_3}).
\end{align*} 
Hence, in \eqref{bracket-radi}, if we choose $\overline{B} = B(\log(1/\eta))^{1/b_3}$ then 
\begin{equation}\label{bracket-bound}
	\int_{\mathbb{R}^{d}} (p_i^{U} - p_i^{L}) d\mu\lesssim \eta \left(\log\left(\dfrac{1}{ \eta}\right)\right)^{d/b_3}.
\end{equation}
Therefore, there exists a positive constant $c$ which does not depend on $\eta$ such that 
\begin{equation*}
    H_B(c\eta \log(1/\eta)^{d/b_3}, \mathcal{P}(\Xi), \norm{\cdot}_1)
    \lesssim \log(1/\eta).
\end{equation*}
Let $\epsilon = c \eta (\log(1/\eta))^{d/b_3}$, we have $\log(1/\epsilon) \asymp \log(1/\eta)$, which combines with inequality $\norm{\cdot}_1\leq h^2$ leads to 
\begin{equation*}
    H_B(\epsilon, \mathcal{P}(\Xi), h)\leq H_B(\epsilon^2, \mathcal{P}(\Xi), \norm{\cdot}_1)\lesssim \log(1/\epsilon^2) \lesssim \log(1/\epsilon).
\end{equation*}
Thus, we have proved claim~\eqref{eq:entropy-hellinger-bound}.


\section{Proofs for Geometric Inverse Bounds}\label{Section:Proof-inv-bound}
\subsection{Proof of Theorem~\ref{theorem:strong_identifiable_model_distinguishable}}
\label{subsec:proof:theorem:strong_identifiable_model_distinguishable}
The second inequality in Theorem \ref{theorem:strong_identifiable_model_distinguishable} is straightforward from the equivalent form of $W_{1}(G,G_{*})$ in Lemma \ref{lemma:bound_Wasserstein_metric} (see Appendix~\ref{sec:auxiliary_results}). Therefore, we will only focus on establishing the first inequality in that theorem. We start with the following key result:
\begin{proposition} \label{proposition:strong_identifiable_model_distinguishable}
Given the assumptions in Theorem \ref{theorem:strong_identifiable_model_not_distinguishable} and $\overline{G}=(\overline{\lambda},\overline{\mu},\overline{\Sigma})$ such that $\overline{\lambda} \in [0,1]$ and $(\overline{\mu},\overline{\Sigma})$ can be equal to $(\mu_{0},\Sigma_{0})$. Then, we have
\begin{eqnarray}
\lim \limits_{\epsilon \to 0}\inf \limits_{G,G_{*}}{\left\{\dfrac{V(p_{G}, p_{G_{*}})}{\mathcal{K}(G,G_{*})}: \ \mathcal{K}(G,\overline{G}) \vee \mathcal{K}(G_{*},\overline{G}) \leq \epsilon\right\}} > 0. \nonumber
\end{eqnarray}
\end{proposition}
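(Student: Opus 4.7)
My plan is to argue by contradiction. Suppose the $\liminf$ in question is $0$; then there exist sequences $G_n=(\lambda_n,\mu_n,\Sigma_n)$ and $G_n^*=(\lambda_n^*,\mu_n^*,\Sigma_n^*)$ with $\mathcal{K}(G_n,\overline{G})\vee \mathcal{K}(G_n^*,\overline{G})\to 0$ and $V(p_{G_n},p_{G_n^*})/\mathcal{K}(G_n,G_n^*)\to 0$. First I would write the density difference in the symmetric form
\[
p_{G_n}-p_{G_n^*} = -(\lambda_n-\lambda_n^*)\Big[h_0-\tfrac{1}{2}\big(f(\cdot|\mu_n,\Sigma_n)+f(\cdot|\mu_n^*,\Sigma_n^*)\big)\Big] + \tfrac{\lambda_n+\lambda_n^*}{2}\big[f(\cdot|\mu_n,\Sigma_n)-f(\cdot|\mu_n^*,\Sigma_n^*)\big],
\]
and apply a first-order Taylor expansion (with integral remainder) to the last bracket around $(\mu_n^*,\Sigma_n^*)$, representing it as a linear combination of $\nabla_\mu f$ and $\nabla_\Sigma f$ at $(\mu_n^*,\Sigma_n^*)$ plus a remainder $R_n(x)$. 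The $\delta_1,\delta_2$-uniform Lipschitz hypothesis bounds $|R_n(x)|$ by $C(\|\mu_n-\mu_n^*\|^{1+\delta_1}+\|\Sigma_n-\Sigma_n^*\|^{1+\delta_2})$ uniformly in $x$.

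Next I would divide through by $\mathcal{K}_n := \mathcal{K}(G_n,G_n^*)$ and introduce the normalized coefficients
\[
a_n := \frac{\lambda_n-\lambda_n^*}{\mathcal{K}_n}, \quad u_n := \frac{(\lambda_n+\lambda_n^*)(\mu_n-\mu_n^*)}{2\mathcal{K}_n}, \quad v_n := \frac{(\lambda_n+\lambda_n^*)(\Sigma_n-\Sigma_n^*)}{2\mathcal{K}_n},
\]
which are uniformly bounded and satisfy $|a_n|+2\|u_n\|+2\|v_n\|\ge 1$ directly from the definition of $\mathcal{K}_n$. After extracting a subsequence along which $a_n\to a$, $u_n\to u$, $v_n\to v$, this normalization gives $|a|+2\|u\|+2\|v\|\ge 1$, so not all three limits vanish. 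In the main case $\overline{\lambda}>0$, the assumption $\mathcal{K}(G_n,\overline{G})\vee\mathcal{K}(G_n^*,\overline{G})\to 0$ forces both $(\mu_n,\Sigma_n)\to(\overline{\mu},\overline{\Sigma})$ and $(\mu_n^*,\Sigma_n^*)\to(\overline{\mu},\overline{\Sigma})$, so the normalized difference converges pointwise to
\[
L(x) := -a\big[h_0(x)-f(x|\overline{\mu},\overline{\Sigma})\big] + u^\top \nabla_\mu f(x|\overline{\mu},\overline{\Sigma}) + \mathrm{tr}\big(v^\top \nabla_\Sigma f(x|\overline{\mu},\overline{\Sigma})\big),
\]
while the remainder contribution $\lambda_n^* R_n(x)/\mathcal{K}_n$ is uniformly $O(\|\mu_n-\mu_n^*\|^{\delta_1}+\|\Sigma_n-\Sigma_n^*\|^{\delta_2})\to 0$, using $\mathcal{K}_n\ge(\lambda_n+\lambda_n^*)\|(\mu_n-\mu_n^*,\Sigma_n-\Sigma_n^*)\|$.

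The $L^1$ convergence $V(p_{G_n},p_{G_n^*})/\mathcal{K}_n\to 0$ together with Fatou's lemma then forces $L(x)=0$ almost everywhere. Since $L$ is a first-order linear combination of $h_0$, $f(\cdot|\overline{\mu},\overline{\Sigma})$ and its first-order derivatives at the single point $(\overline{\mu},\overline{\Sigma})$, the first-order distinguishability condition D1 of Definition~\ref{definition:distinguishable_classes} forces $a=u=v=0$, contradicting $|a|+2\|u\|+2\|v\|\ge 1$.

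The main obstacle is the degenerate regime $\overline{\lambda}=0$: then $\mathcal{K}(G_n,\overline{G})\to 0$ only forces $\lambda_n,\lambda_n^*\to 0$ and places no constraint on $(\mu_n,\Sigma_n),(\mu_n^*,\Sigma_n^*)$, which thus need not share a common limit. I would handle this by using compactness of $\Theta\times\Omega$ to extract subsequences with $(\mu_n,\Sigma_n)\to(\mu',\Sigma')$ and $(\mu_n^*,\Sigma_n^*)\to(\mu^{*\prime},\Sigma^{*\prime})$: if the two limits coincide, the argument above applies verbatim with $(\overline{\mu},\overline{\Sigma})$ replaced by $(\mu',\Sigma')$, and if they differ I would skip the Taylor step and pass directly to a limiting identity of the form $\alpha f(\cdot|\mu',\Sigma')-\beta f(\cdot|\mu^{*\prime},\Sigma^{*\prime})+(\beta-\alpha)h_0=0$ with $\alpha,\beta\ge 0$, so that the sign condition $\alpha\cdot(-\beta)\le 0$ triggers the basic distinguishability A1 of Definition~\ref{definition:distinguishable} and forces $\alpha=\beta=0$, yielding the same contradiction.
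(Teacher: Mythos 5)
Your argument is correct and follows the same overall strategy as the paper: contradiction, first-order Taylor expansion with remainder controlled by the uniform Lipschitz hypothesis, Fatou's lemma to pass to a pointwise limit, and the first-order distinguishability condition D1 to force the limiting coefficients to vanish. There are, however, two organizational choices that make your version cleaner than the one in the paper. First, you normalize directly by $\mathcal{K}_n$ and observe the elementary lower bound $|a_n|+2\|u_n\|+2\|v_n\|\geq 1$ (which follows immediately from $\|(\mu-\mu^*,\Sigma-\Sigma^*)\|\leq\|\mu-\mu^*\|+\|\Sigma-\Sigma^*\|$); this guarantees outright that the limiting coefficients $(a,u,v)$ are not all zero. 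The paper instead first supposes all $\mathcal{K}_n$-normalized coefficients vanish in the limit, derives the contradiction $1\to 0$, and only then introduces a second normalization by $m_n=\max|\text{coefficients}|$ to obtain a nontrivial limiting identity — a two-step detour that your bound avoids entirely. Second, your symmetric decomposition $p_{G}-p_{G_*}=-(\lambda-\lambda^*)[h_0-\tfrac12(f+f^*)]+\tfrac{\lambda+\lambda^*}{2}(f-f^*)$ is notationally tidy but not essential; the paper uses the asymmetric split $(\lambda^*-\lambda)[h_0-f^*]+\lambda[f-f^*]$ and reaches the same place. Finally, you are more careful than the paper about the degenerate regime $\overline{\lambda}=0$, where $\mathcal{K}(G_n,\overline{G})\to 0$ does not force $(\mu_n,\Sigma_n)\to(\overline{\mu},\overline{\Sigma})$; the paper dispatches the case of non-coincident limit points with the phrase ``The other settings of these two components can be argued in the same fashion,'' whereas you spell out the limiting identity $\alpha f(\cdot|\mu',\Sigma')-\beta f(\cdot|\mu^{*\prime},\Sigma^{*\prime})+(\beta-\alpha)h_0=0$ and correctly invoke the sign condition of A1. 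The only microscopic sloppiness is that the remainder appears with prefactor $(\lambda_n+\lambda_n^*)/(2\mathcal{K}_n)$ rather than $\lambda_n^*/\mathcal{K}_n$ in your decomposition, but the bound you state is valid for that quantity as well, so the argument is unaffected.
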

\begin{proof} The high level idea of the proof of Proposition \ref{proposition:strong_identifiable_model} is to utilize the Taylor expansion techniques previously employed in \cite{Chen1992, Nguyen-13, Ho-Nguyen-EJS-16, heinrich2018strong}. Indeed, following Fatou's argument from Theorem 3.1 in \cite{Ho-Nguyen-EJS-16}, to obtain the conclusion of Proposition \ref{proposition:strong_identifiable_model} it suffices to demonstrate that
\begin{eqnarray}
\lim \limits_{\epsilon \to 0}\inf \limits_{G,G_{*}}{\left\{\dfrac{\|p_{G}-p_{G_{*}}\|_{\infty}}{\mathcal{K}(G,G_{*})}: \ \mathcal{K}(G,\overline{G}) \vee \mathcal{K}(G_{*},\overline{G}) \leq \epsilon\right\}} > 0. \nonumber
\end{eqnarray}
Assume that the above conclusion does not hold. It implies that we can find two sequences $G_{n}=(\lambda_{n},\mu_{n},\Sigma_{n})$ and $G_{*,n}=(\lambda^{*}_{n}, \mu_{n}^{*},\Sigma_{n}^{*})$ such that $\mathcal{K}(G_{n},\overline{G})\to 0$, $\mathcal{K}(G_{*,n},\overline{G}) \to 0$, and $\| p_{G_{n}}-p_{G_{*,n}}\|_{\infty}/\mathcal{K}(G_{n},G_{*,n}) \to 0$ as $n \to \infty$. Now, we only consider the most challenging setting of $(\mu_{n},\Sigma_{n})$ and $(\mu_{n}^{*},\Sigma_{n}^{*})$ when they share the same limit point $(\mu',\Sigma')$. The other settings of these two components can be argued in the same fashion. Here, $(\mu',\Sigma')$ is not necessarily equal to $(\mu_{0},\Sigma_{0})$ or $(\overline{\mu},\overline{\Sigma})$ as $\lambda_{n}, \lambda_{n}^{*}$ can go to 0 or 1 in the limit. Under that setting, by means of Taylor expansion up to the first order we obtain
\begin{eqnarray}
\dfrac{p_{G_{n}}(x)-p_{G_{*,n}}(x)}{\mathcal{K}(G_{n},G_{*,n})} & = & \dfrac{(\lambda^{*}_{n}-\lambda_{n})[h_{0}(x|\mu_{0},\Sigma_{0})-f(x|\mu_{n}^{*},\Sigma_{n}^{*})]+\lambda_{n}[f(x|\mu_{n},\Sigma_{n})-f(x|\mu^{*}_{n},\Sigma^{*}_{n})]}{\mathcal{K}(G_{n},G_{*,n})} \nonumber \\
& = & \dfrac{(\lambda^{*}_{n}-\lambda_{n})[h_{0}(x|\mu_{0},\Sigma_{0})-f(x|\mu_{n}^{*},\Sigma_{n}^{*})]}{\mathcal{K}(G_{n},G_{*,n})} \nonumber \\
& + & \dfrac{\lambda_{n}\biggr(\sum \limits_{|\alpha|=1} \dfrac{(\mu_{n}-\mu^{*}_{n})^{\alpha_{1}}(\Sigma_{n}-\Sigma^{*}_{n})^{\alpha_{2}}}{\alpha!}\dfrac{\partial^{|\alpha|}{f}}{\partial{\mu^{\alpha_{1}}}\partial{\Sigma^{\alpha_{2}}}}(x|\mu_{n}^{*},\Sigma_{n}^{*})+R_{1}(x)\biggr)}{\mathcal{K}(G_{n},G_{*,n})} \nonumber
\end{eqnarray}
where $R_{1}(x)$ is Taylor remainder and $\alpha=(\alpha_{1},\alpha_{2})$ in the summation of the second equality satisfies $\alpha_{1}=(\alpha_{1}^{(1)},\ldots,\alpha_{d_{1}}^{(1)}) \in \mathbb{N}^{d_{1}}$, $\alpha_{2}=(\alpha_{uv}^{(2)}) \in \mathbb{N}^{d_{2} \times d_{2}}$, $|\alpha|=\sum \limits_{i=1}^{d_{1}}{\alpha_{i}^{(1)}}+\sum \limits_{1 \leq u,v \leq d_{2}}{\alpha_{uv}^{(2)}}$, and $\alpha!=\prod \limits_{i=1}^{d_{1}}{\alpha_{i}^{(1)}!}\prod \limits_{1 \leq u,v \leq d_{2}}{\alpha_{uv}^{(2)}!}$. As $f$ admits the first order uniform Lipschitz condition, we have $R_{1}(x)=O(\|(\mu_{n},\Sigma_{n})-(\mu_{n}^{*},\Sigma_{n}^{*})\|^{1+\gamma})$ for some $\gamma>0$, which implies that 
\begin{eqnarray}
\lambda_{n}|R_{1}(x)|/\mathcal{K}(G_{n},G_{*,n})= O(\|(\mu_{n},\Sigma_{n})-(\mu_{n}^{*},\Sigma_{n}^{*})\|^{\gamma}) \to 0 \nonumber
\end{eqnarray}
as $n \to \infty$. Therefore, we can treat $[p_{G_{n}}(x)-p_{G_{*,n}}(x)]/\mathcal{K}(G_{n},G_{*,n})$ as the linear combination of $h_{0}(x|\theta_{0},\Sigma_{0})$ and $\dfrac{\partial^{|\alpha|}{f}}{\partial{\mu^{\alpha_{1}}}\partial{\Sigma^{\alpha_{2}}}}(x|\mu_{n}^{*},\Sigma_{n}^{*})$ when $|\alpha| \leq 1$. Assume that the coefficients of these terms go to 0. Then, by studying the coefficients of $h_{0}(x|\theta_{0},\Sigma_{0})$, $\dfrac{\partial{f}}{\partial{\mu_{i}}}(x|\mu_{0},\Sigma_{0})$, and  $\dfrac{\partial{f}}{\partial{\Sigma_{uv}}}(x|\mu_{0},\Sigma_{0})$, we achieve 
\begin{eqnarray}
(\lambda_{n}^{*}-\lambda_{n})/\mathcal{K}(G_{n},G_{*,n}) \to 0, \ \lambda_{n}(\mu_{n}-\mu_{n}^{*})_{i}/\mathcal{K}(G_{n},G_{*,n}) \to 0, \ \lambda_{n}(\Sigma_{n}-\Sigma_{n}^{*})_{uv}/\mathcal{K}(G_{n},G_{*,n}) \to 0 \nonumber
\end{eqnarray}
for all $1 \leq i \leq d_{1}$ and $1 \leq u,v \leq d_{2}$ where $(a)_{i}$ denotes the $i$-th element of vector $a$ and $A_{uv}$ denotes the $(u,v)$-th element of matrix $A$. It would imply that
\begin{eqnarray}
(\lambda_{n}+\lambda_{n}^{*}) \|(\mu_{n},\Sigma_{n})-(\mu_{n}^{*},\Sigma_{n}^{*})\|/ \mathcal{K}(G_{n},G_{*,n}) \to 0. \nonumber 
\end{eqnarray}
Therefore, we achieve
\begin{eqnarray}
1 = \biggr(|\lambda_{n}^{*}-\lambda_{n}|+(\lambda_{n}+\lambda_{n}^{*}) \|(\mu_{n},\Sigma_{n})-(\mu_{n}^{*},\Sigma_{n}^{*})\|\biggr)/\mathcal{K}(G_{n},G_{*,n}) \to 0, \nonumber
\end{eqnarray}
a contradiction. Therefore, not all the coefficients of $h_{0}(x|\theta_{0},\Sigma_{0})$ and $\dfrac{\partial^{|\alpha|}{f}}{\partial{\mu^{\alpha_{1}}}\partial{\Sigma^{\alpha_{2}}}}(x|\mu_{n}^{*},\Sigma_{n}^{*})$ go to 0. If we denote $m_{n}$ to be the maximum of the absolute values of the coefficients of $h_{0}(x|\theta_{0},\Sigma_{0})$ and $\dfrac{\partial^{|\alpha|}{f}}{\partial{\mu^{\alpha_{1}}}\partial{\Sigma^{\alpha_{2}}}}(x|\mu_{n}^{*},\Sigma_{n}^{*})$, then we get $1/m_{n} \not \to \infty$ as $n \to \infty$, i.e., $1/m_{n}$ is uniformly bounded. Hence, we achieve for all $x$ that
\begin{eqnarray}
\dfrac{1}{m_{n}}\dfrac{p_{G_{n}}(x)-p_{G_{*,n}}(x)}{K(G_{n},G_{*,n})} \to \eta f(|\mu_{0},\Sigma_{0})+\sum \limits_{|\alpha| \leq 1}{\tau_{\alpha}\dfrac{\partial^{|\alpha|}{f}}{\partial{\mu^{\alpha_{1}}}\partial{\Sigma^{\alpha_{2}}}}(x|\mu',\Sigma')}= 0 \nonumber
\end{eqnarray}
for some coefficients $\eta$ and $\tau_{\alpha}$ such that they are not all 0. However, as $f$ is distinguishable from $h_{0}$ up to the first order, the above equation indicates that $\eta=\tau_{\alpha}=0$ for all $|\alpha| \leq 1$, a contradiction. As a consequence, we achieve the conclusion of the proposition.
\end{proof}
Now, assume that the conclusion of Theorem \eqref{theorem:strong_identifiable_model_distinguishable} does not hold. It implies that we can find two sequences $G_{n}'$ and $G_{*,n}'$ such that $A_{n} =\|p_{G_{n}'}-p_{G_{*,n}'}\|_{2}/\mathcal{K}(G_{n}',G_{*,n}') \to 0$ as $n \to \infty$. Since $\Theta$ and $\Omega$ are two bounded subsets, we can find subsequences of $G_{n}'$ and $G_{*,n}'$ such that $\mathcal{K}(G_{n}',\overline{G}_{1})$ and $\mathcal{K}(G_{*,n}',\overline{G}_{2})$ vanish to 0 as $n \to \infty$ where $\overline{G}_{1},\overline{G}_{2}$ are some discrete measures having one component to be $(\mu_{0},\Sigma_{0})$. Because $A_{n} \to 0$, we obtain $V(p_{G_{n}'}, p_{G_{*,n}'}) \to 0$ as $n \to \infty$. By means of Fatou's lemma, we have
\begin{eqnarray}
0 = \lim \limits_{n \to \infty}\int \left|(p_{G_{n}'}(x)-p_{G_{*,n}'}(x))\right| dx  \geq \int \mathop{\liminf }\limits_{n \to \infty}\left| (p_{G_{n}'}(x)-p_{G_{*,n}'}(x))\right| dx = V(p_{\overline{G}_{1}}(x), p_{\overline{G}_{2}}(x)). \nonumber
\end{eqnarray}
Due to the fact that $f$ is distinguishable from $h_{0}$ up to the first order, the above equation implies that $\overline{G}_{1} \equiv \overline{G}_{2}$. However, from the result of Proposition \ref{proposition:strong_identifiable_model_distinguishable}, regardless of the value of $\overline{G}_{1}$ we would have $A_{n} \not \to 0$ as $n \to \infty$, which is a contradiction. Therefore, we obtain the conclusion of the theorem.
\subsection{Proof of Theorem~\ref{theorem:strong_identifiable_model_not_distinguishable}}
\label{subsec:proof:theorem:strong_identifiable_model_not_distinguishable}
Prior to presenting the proof of Theorem~\ref{theorem:strong_identifiable_model_not_distinguishable}, we introduce the definition of second-order uniform Lipschitz:
\begin{definition}[Second-order Uniform Lipschitz]
We say that $f$ is uniformly Lipschitz up to the second order if the following holds: there are positive constants $\delta_{3}$, $\delta_{4}$ such that for any $R_4,R_5,R_6>0$, $\gamma_1\in\mathbb{R}^{d_1}$, $\gamma_2\in\mathbb{R}^{d_2\times d_2}$, $R_4\leq\sqrt{\lambda_1(\Sigma)}\leq\sqrt{\lambda_{d_2}(\Sigma)}\leq R_5$, $\|\theta\|\leq R_6$, $\theta_1,\theta_2\in\Theta$, $\Sigma_1,\Sigma_2\in\Omega$, there are positive constants $C_1$ depending on $(R_4,R_5)$ and $C_2$ depending on $R_6$ such that for all $x\in\mathcal{X}$,
\begin{align*}
    \Big|\gamma_1^{\top}\Big(\frac{\partial^2f}{\partial\theta^2}(x|\theta_1,\Sigma)-\frac{\partial^2f}{\partial \theta^2}(x|\theta_2,\Sigma)\Big)\gamma_1\Big|\leq\|\theta_1-\theta_2\|_1^{\delta_3}\|\gamma_1\|_2^2,\\
    \Big|\trace\Big(\Big[\frac{\partial}{\partial\Sigma}\Big(\trace\Big(\frac{\partial f}{\partial\Sigma_1}(x|\theta,\Sigma)^{\top}\gamma_2\Big)\Big)-\frac{\partial}{\partial\Sigma}\Big(\trace\Big(\frac{\partial f}{\partial\Sigma}(x|\theta,\Sigma_2)^{\top}\gamma_2\Big)\Big)\Big]^{\top}\gamma_2\Big)\Big|\\
    \leq C_2\|\Sigma_1-\Sigma_2\|_2^{\delta_4}\|\gamma\|_2^2.
\end{align*}
\end{definition}
Now, we are back to the main proof. Utilizing the same Fatou's argument as that of Proposition \ref{proposition:strong_identifiable_model_distinguishable} , to achieve the conclusion of the first inequality in Theorem \ref{theorem:strong_identifiable_model_not_distinguishable} it suffices to demonstrate the following result
\begin{proposition} \label{proposition:strong_identifiable_model}
Given the assumptions in Theorem \ref{theorem:strong_identifiable_model_not_distinguishable} and $\overline{G}=(\overline{\lambda},\overline{\mu},\overline{\Sigma})$ such that $\overline{\lambda} \in [0,1]$ and $(\overline{\mu},\overline{\Sigma})$ can be identical to $ (\mu_{0},\Sigma_{0})$. Then, the following holds
\begin{itemize}
\item[(a)] If $ (\mu_{0},\Sigma_{0}) \neq (\overline{\mu},\overline{\Sigma})$ and $\overline{\lambda}>0$, then
\begin{eqnarray}
\lim \limits_{\epsilon \to 0}\inf \limits_{G,G_{*}}{\left\{\dfrac{\|p_{G}-p_{G_{*}}\|_{\infty}}{\mathcal{K}(G,G_{*})}: \ \mathcal{K}(G,\overline{G}) \vee \mathcal{K}(G_{*},\overline{G}) \leq \epsilon\right\}} > 0. \nonumber
\end{eqnarray}
\item[(b)] If $ (\mu_{0},\Sigma_{0}) \equiv (\overline{\mu},\overline{\Sigma})$ or $ (\mu_{0},\Sigma_{0}) \neq (\overline{\mu},\overline{\Sigma})$ and $\overline{\lambda}=0$, then
\begin{eqnarray}
\lim \limits_{\epsilon \to 0}\inf \limits_{G,G_{*}}{\left\{\dfrac{\|p_{G}-p_{G_{*}}\|_{\infty}}{\mathcal{D}(G,G_{*})}: \ \mathcal{D}(G,\overline{G}) \vee \mathcal{D}(G_{*},\overline{G}) \leq \epsilon\right\}} > 0. \nonumber
\end{eqnarray}
\end{itemize}
\end{proposition}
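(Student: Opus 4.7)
Both parts follow the Fatou-plus-Taylor contradiction scheme of Proposition~\ref{proposition:strong_identifiable_model_distinguishable}. Assuming the conclusion fails, I extract sequences $G_n = (\lambda_n,\mu_n,\Sigma_n)$ and $G_{*,n} = (\lambda_n^*,\mu_n^*,\Sigma_n^*)$ along which $\|p_{G_n} - p_{G_{*,n}}\|_\infty$ divided by the relevant loss tends to zero; after passing to subsequences I assume $(\mu_n,\Sigma_n)\to(\mu',\Sigma')$ and $(\mu_n^*,\Sigma_n^*)\to(\mu'^*,\Sigma'^*)$. The plan is to Taylor-expand $p_{G_n} - p_{G_{*,n}}$ around an appropriate atom, normalize by the largest Taylor coefficient, and pass to the limit: either every normalized coefficient already vanishes---in which case the loss in the denominator must also vanish and I get the arithmetic contradiction $1 = o(1)$---or I produce a nontrivial linear relation among translates of $f$ and its partial derivatives that contradicts second-order strong identifiability.

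\textbf{Part (a).} Since $(\overline{\mu},\overline{\Sigma})\neq(\mu_0,\Sigma_0)$ and $\overline{\lambda}>0$, both limiting atoms are distinct from $(\mu_0,\Sigma_0)$ and the deviated weights stay bounded away from zero, so the situation is structurally identical to the distinguishable case. Writing
\[
p_{G_n} - p_{G_{*,n}} = (\lambda_n^* - \lambda_n)\bigl[f(\cdot|\mu_0,\Sigma_0) - f(\cdot|\mu_n^*,\Sigma_n^*)\bigr] + \lambda_n\bigl[f(\cdot|\mu_n,\Sigma_n) - f(\cdot|\mu_n^*,\Sigma_n^*)\bigr]
\]
and expanding the second bracket to first order around $(\mu_n^*,\Sigma_n^*)$ (the first-order uniform Lipschitz condition controls the remainder after dividing by $\mathcal{K}(G_n,G_{*,n})$), I obtain in the limit a linear relation among $f(x|\mu_0,\Sigma_0)$, $f(x|\mu'^*,\Sigma'^*)$ and $\partial^{|\alpha|} f(x|\mu'^*,\Sigma'^*)/\partial\mu^{\alpha_1}\partial\Sigma^{\alpha_2}$ for $|\alpha|=1$. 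Because $(\mu_0,\Sigma_0)\neq(\mu'^*,\Sigma'^*)$, second-order strong identifiability of $f$ applied with $k=2$ distinct atoms subsumes first-order independence at two points and forces every coefficient to be zero, a contradiction.

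\textbf{Part (b).} The substantive new case is $(\overline{\mu},\overline{\Sigma})=(\mu_0,\Sigma_0)$, where the two limiting atoms collapse and the first-order expansion is degenerate. Using $h_0(x) = f(x|\mu_0,\Sigma_0)$ and expanding both $f(\cdot|\mu_n,\Sigma_n)$ and $f(\cdot|\mu_n^*,\Sigma_n^*)$ to second order around $(\mu_0,\Sigma_0)$,
\[
p_{G_n}(x) - p_{G_{*,n}}(x) = \sum_{1\leq|\alpha|\leq 2} A_{\alpha,n}\, \dfrac{\partial^{|\alpha|} f}{\partial\mu^{\alpha_1}\partial\Sigma^{\alpha_2}}(x|\mu_0,\Sigma_0) + R_n(x),
\]
with $A_{\alpha,n} = \tfrac{1}{\alpha!}\bigl(\lambda_n(\Delta\mu_n)^{\alpha_1}(\Delta\Sigma_n)^{\alpha_2} - \lambda_n^*(\Delta\mu_n^*)^{\alpha_1}(\Delta\Sigma_n^*)^{\alpha_2}\bigr)$ and $R_n = o\bigl(\lambda_n\|(\Delta\mu_n,\Delta\Sigma_n)\|^{2} + \lambda_n^*\|(\Delta\mu_n^*,\Delta\Sigma_n^*)\|^{2}\bigr)$ by the second-order uniform Lipschitz condition. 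Dividing by $\mathcal{D}(G_n,G_{*,n})$ and renormalizing by the largest $|A_{\alpha,n}|$, the second branch of the dichotomy yields a nontrivial identity $\sum_{1\leq|\alpha|\leq 2}\tau_\alpha\,\partial^{|\alpha|}f(x|\mu_0,\Sigma_0)/\partial\mu^{\alpha_1}\partial\Sigma^{\alpha_2}=0$, contradicting second-order strong identifiability at $(\mu_0,\Sigma_0)$; the first branch is closed by the algebraic estimate $\sum_{|\alpha|\leq 2}|A_{\alpha,n}|\gtrsim \mathcal{D}(G_n,G_{*,n})$. The auxiliary sub-case $\overline{\lambda}=0$ with $(\overline{\mu},\overline{\Sigma})\neq(\mu_0,\Sigma_0)$ needs no Taylor expansion: normalizing $(\lambda_n^*-\lambda_n)f(\cdot|\mu_0,\Sigma_0) + \lambda_n f(\cdot|\mu_n,\Sigma_n) - \lambda_n^* f(\cdot|\mu_n^*,\Sigma_n^*)$ directly by $\mathcal{D}$ yields a relation among at most three distinct translates of $f$ that again violates second-order strong identifiability.

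\textbf{Main obstacle.} The substantive step is the algebraic inequality $\sum_{|\alpha|\leq 2}|A_{\alpha,n}|\gtrsim \mathcal{D}(G_n,G_{*,n})\asymp \overline{\mathcal{D}}(G_n,G_{*,n})$. Via the identity $\lambda\Delta\mu - \lambda^*\Delta\mu^* = (\lambda-\lambda^*)\Delta\mu + \lambda^*(\Delta\mu-\Delta\mu^*)$ and its covariance analogue, the first-order $A_\alpha$'s recover the cross term $(\lambda\|(\Delta\mu,\Delta\Sigma)\| + \lambda^*\|(\Delta\mu^*,\Delta\Sigma^*)\|)\|(\mu,\Sigma)-(\mu^*,\Sigma^*)\|$ in $\overline{\mathcal{D}}$; the second-order $A_\alpha$'s, after case-splitting on whether $\lambda_n\leq\lambda_n^*$ or the reverse, produce the remaining quadratic mass $\lambda\|(\Delta\mu,\Delta\Sigma)\|^2 + \lambda^*\|(\Delta\mu^*,\Delta\Sigma^*)\|^2 - \min\{\lambda,\lambda^*\}(\cdots)$. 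This case-split bookkeeping, rather than the Taylor expansion itself, is where I expect the real work of the proof to lie.
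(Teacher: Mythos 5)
Your Part (a) sketch is sound and matches the paper. The gap is in Part (b), and it is not the algebraic combinatorics you flag as the ``main obstacle''---it is the remainder control in your Taylor expansion, which as stated does not work.

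You decompose $p_{G_n}-p_{G_{*,n}}$ symmetrically and expand \emph{both} $f(\cdot|\mu_n,\Sigma_n)$ and $f(\cdot|\mu_n^*,\Sigma_n^*)$ to second order around $(\mu_0,\Sigma_0)$, claiming a remainder $R_n = o(\lambda_n A_n^2 + \lambda_n^* B_n^2)$ where $A_n = \|(\Delta\mu_n,\Delta\Sigma_n)\|$ and $B_n=\|(\Delta\mu_n^*,\Delta\Sigma_n^*)\|$. That estimate is true but useless: you need $R_n = o(\mathcal{D}(G_n,G_{*,n}))$, and $\mathcal{D}$ can be arbitrarily smaller than $\lambda_n A_n^2 + \lambda_n^* B_n^2$. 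Concretely, with $\lambda_n=\lambda_n^*$ held fixed one has $\mathcal{D}(G_n,G_{*,n}) \asymp (\lambda_n A_n + \lambda_n^* B_n)C_n$ where $C_n = \|(\mu_n,\Sigma_n)-(\mu_n^*,\Sigma_n^*)\|$; take $A_n\asymp B_n \asymp n^{-1/2}$ and $C_n\asymp n^{-1}$, so $\mathcal{D}\asymp n^{-3/2}$ while $\lambda_n A_n^{2+\gamma}\asymp n^{-1-\gamma/2}$, and the ratio blows up whenever the Lipschitz exponent $\gamma<1$. So after dividing by $\mathcal{D}$ the remainder does \emph{not} vanish, and the whole limiting argument fails before you ever reach the coefficient bookkeeping.

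The paper avoids this precisely by \emph{not} expanding around $(\mu_0,\Sigma_0)$. It writes $p_{G_n}-p_{G_{*,n}} = (\lambda_n^*-\lambda_n)[f(\mu_0)-f(\mu_n^*)] + \lambda_n[f(\mu_n)-f(\mu_n^*)]$ (WLOG $\lambda_n^*\ge\lambda_n$) and expands both brackets around $(\mu_n^*,\Sigma_n^*)$. The two remainders then enter with weights $(\lambda_n^*-\lambda_n)\cdot O(B_n^{2+\gamma})$ and $\lambda_n\cdot O(C_n^{2+\gamma})$, which are dominated by $\mathcal{D}$ times $O(B_n^\gamma)$ and $O(C_n^\gamma)$ respectively, using the inequalities $\mathcal{D}\ge(\lambda_n^*-\lambda_n)B_n^2$ and $\mathcal{D}\ge\lambda_n C_n^2$ (the latter from $\mathcal{D}\ge(\lambda_n A_n+\lambda_n^* B_n)C_n$ and $A_n+B_n\ge C_n$). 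A further dividend of this choice of expansion point: the diagonal second-order coefficient becomes $\tfrac12\bigl[(\lambda_n^*-\lambda_n)(\Delta\mu_n^*)_i^2 + \lambda_n(\Delta\mu_n-\Delta\mu_n^*)_i^2\bigr]$, a \emph{sum of nonnegative terms}, so each piece separately vanishes modulo $\mathcal{D}$; in your parametrization the coefficient $\tfrac12[\lambda_n(\Delta\mu_n)_i^2 - \lambda_n^*(\Delta\mu_n^*)_i^2]$ is a difference and requires the extra multiplicative manipulations you allude to (which can in fact be carried through, but are not a free lunch). Separately, your proof omits the sub-cases of Part~(b) where one or both of $(\mu_n,\Sigma_n),(\mu_n^*,\Sigma_n^*)$ do \emph{not} converge to $(\mu_0,\Sigma_0)$ (which can happen even when $\mathcal{D}(G_n,\overline G)\to 0$, e.g.\ if $\lambda_n\to 0$); the paper handles these via separate arguments reducing to first-order expansions or to the distinguishable-case argument.
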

\begin{proof} The proof of part (a) is essentially similar to that of Proposition \ref{proposition:strong_identifiable_model_distinguishable}; therefore,  we only provide the proof for the challenging settings of part (b). Here, we only consider the setting that $ (\mu_{0},\Sigma_{0}) \equiv (\overline{\mu},\overline{\Sigma})$ as the proof for other possibilities of $ (\mu_{0},\Sigma_{0})$ can be argued in the similar fashion.  Under this assumption, $ (\mu_{0},\Sigma_{0})=(\mu_{0},\Sigma_{0})$, $\overline{G}=(\overline{\lambda},\theta_{0},\Sigma_{0})$, and $h_{0}(x|\theta_{0},\Sigma_{0}) = f(x|\theta_{0},\Sigma_{0})$ for all $x \in \mathcal{X}$. Assume that the conclusion of Proposition \ref{proposition:strong_identifiable_model} does not hold. It implies that we can find two sequences $G_{n}=(\lambda_{n},\mu_{n},\Sigma_{n})$ and $G_{*,n}=(\lambda^{*}_{n},\mu^{*}_{n},\Sigma_{n}^{*})$ such that $\mathcal{D}(G_{n},\overline{G})=\lambda_{n}\|(\Delta \mu_{n},\Delta \Sigma_{n})\|^{2} \to 0$, $\mathcal{D}(G_{*,n},\overline{G})=\lambda_{n}^{*}\|(\Delta \mu_{n}^{*},\Delta \Sigma_{n}^{*})\|^{2} \to 0$, and $\| p_{G_{n}}-p_{G_{*,n}}\|_{\infty}/\mathcal{D}(G_{n},G_{*,n}) \to 0$ as $n \to \infty$. For the transparency of presentation, we denote $A_{n}=\|(\Delta \mu_{n},\Delta \Sigma_{n})\|$, $B_{n}= \|(\Delta \mu_{n}^{*},\Delta \Sigma_{n}^{*})\|$, and $C_{n}= \|(\mu_{n},\Sigma_{n})-(\mu_{n}^{*},\Sigma_{n}^{*})\|=\|(\Delta \mu_{n},\Delta \Sigma_{n})-(\Delta \mu_{n}^{*},\Delta \Sigma_{n}^{*})\|$.
Now, we have three main cases regarding the convergence behaviors of $(\mu_{n},\Sigma_{n})$ and $(\mu^{*}_{n},\Sigma^{*}_{n})$
\paragraph{Case 1:} Both $A_{n} \to 0$ and $B_{n} \to 0$, i.e., $(\mu_{n},\Sigma_{n})$ and $(\mu^{*}_{n},\Sigma^{*}_{n})$ vanish to $(\mu_{0},\Sigma_{0})$ as $n \to \infty$. Due to the symmetry between $\lambda_{n}$ and $\lambda_{n}^{*}$, we assume without loss of generality that $\lambda_{n}^{*} \geq \lambda_{n}$ for infinite values of $n$. Without loss of generality, we replace these subsequences of $G_{n}, G_{*,n}$ by the whole sequences of $G_{n}$ and $G_{*,n}$. Now, the formulation of $\mathcal{D}(G_{n},G_{*,n})$ is
\begin{eqnarray}
\mathcal{D}(G_{n},G_{*,n}) = (\lambda_{n}^{*}-\lambda_{n})B_{n}^{2}+\biggr(\lambda_{n} A_{n}+ \lambda_{n}^{*}B_{n}\biggr)C_{n}. \nonumber
\end{eqnarray}
Now, by means of Taylor expansion up to the second order, we get
\begin{eqnarray}
\dfrac{p_{G_{n}}(x)-p_{G_{*,n}}(x)}{\mathcal{D}(G_{n},G_{*,n})} & = & \dfrac{(\lambda^{*}_{n}-\lambda_{n})[f(x|\mu_{0},\Sigma_{0})-f(x|\mu_{n}^{*},\Sigma_{n}^{*})]+\lambda_{n}[f(x|\mu_{n},\Sigma_{n})-f(x|\mu^{*}_{n},\Sigma^{*}_{n})]}{\mathcal{D}(G_{n},G_{*,n})} \nonumber \\
& = & \dfrac{(\lambda^{*}_{n}-\lambda_{n})\biggr(\sum \limits_{|\alpha|=1}^{2} \dfrac{(-\Delta \mu^{*}_{n})^{\alpha_{1}}(-\Delta \Sigma^{*}_{n})^{\alpha_{2}}}{\alpha!}\dfrac{\partial^{|\alpha|}{f}}{\partial{\mu^{\alpha_{1}}}\partial{\Sigma^{\alpha_{2}}}}(x|\mu_{n}^{*},\Sigma_{n}^{*})+R_{1}(x)\biggr)}{\mathcal{D}(G_{n},G_{*,n})} \nonumber \\
& + & \dfrac{\lambda_{n}\biggr(\sum \limits_{|\alpha|=1}^{2} \dfrac{(\Delta \mu_{n}-\Delta \mu^{*}_{n})^{\alpha_{1}}(\Delta \Sigma_{n}-\Delta \Sigma^{*}_{n})^{\alpha_{2}}}{\alpha!}\dfrac{\partial^{|\alpha|}{f}}{\partial{\mu^{\alpha_{1}}}\partial{\Sigma^{\alpha_{2}}}}(x|\mu_{n}^{*},\Sigma_{n}^{*})+R_{2}(x)\biggr)}{\mathcal{D}(G_{n},G_{*,n})} \nonumber
\end{eqnarray}
where $R_{1}(x)$ and $R_{2}(x)$ are Taylor remainders that satisfy $R_{1}(x)=O(B_{n}^{2+\gamma})$ and $R_{2}(x)=O(C_{n}^{2+\gamma})$ for some positive number $\gamma$ due to the second order uniform Lipschitz condition of kernel density function $f$. From the formation of $\mathcal{D}(G_{n},G_{*,n})$, since $A_{n}+B_{n} \geq C_{n}$ (triangle inequality), as $A_{n} \to 0$ and $B_{n} \to 0$ it is clear that 
\begin{eqnarray}
(\lambda_{n}-\lambda^{*}_{n})|R_{1}(x)|/\mathcal{D}(G_{n},G_{*,n}) \leq |R_{1}(x)|/B_{n}^{2} =  O(B_{n}^{\gamma}) \to 0 \nonumber \\
\lambda_{n}|R_{2}(x)|/\mathcal{D}(G_{n},G_{*,n}) \leq |R_{2}(x)|/\left\{(A_{n}+B_{n})C_{n}\right\}=O\biggr(C_{n}^{2+\gamma}/C_{n}^{2}\biggr) = O (C_{n}^{\gamma}) \to 0 \nonumber
\end{eqnarray}
as $n \to \infty$ for all $x \in \mathcal{X}$. Therefore, we achieve for all $x \in \mathcal{X}$ that
\begin{eqnarray}
\biggr((\lambda_{n}-\lambda^{*}_{n})|R_{1}(x)|+\lambda_{n}|R_{2}(x)|\biggr)/\mathcal{D}(G_{n},G_{*,n}) \to 0. \nonumber
\end{eqnarray} 
Hence, we can treat $[p_{G_{n}}(x)-p_{G_{*,n}}(x)]/\mathcal{D}(G_{n},G_{*,n})$ as a linear combination of $\dfrac{\partial^{|\alpha|}{f}}{\partial{\mu^{\alpha_{1}}}\partial{\Sigma^{\alpha_{2}}}}(x|\mu_{n}^{*},\Sigma_{n}^{*})$ for all $x$ and $\alpha=(\alpha_{1},\alpha_{2})$ such that $1 \leq |\alpha| \leq 2$. Assume that all the coefficients of these terms go to 0 as $n \to \infty$. By studying the vanishing behaviors of the coefficients of $\dfrac{\partial^{|\alpha|}{f}}{\partial{\mu^{\alpha_{1}}}\partial{\Sigma^{\alpha_{2}}}}(x|\mu_{n}^{*},\Sigma_{n}^{*})$ as $|\alpha|=1$, we achieve the following limits
\begin{eqnarray}
\biggr(\lambda_{n}(\Delta \mu_{n})_{i}-\lambda_{n}^{*}(\Delta \mu_{n}^{*})_{i}\biggr)/\mathcal{D}(G_{n},G_{*,n}) \to 0, \ 
\biggr(\lambda_{n}(\Delta \Sigma_{n})_{uv}-\lambda_{n}^{*}(\Delta \Sigma_{n}^{*})_{uv}\biggr)/\mathcal{D}(G_{n},G_{*,n}) \to 0 \nonumber
\end{eqnarray}
for all $1 \leq i \leq d_{1}$ and $1 \leq u,v \leq d_{2}$ where $(a)_{i}$ denotes the $i$-th element of vector $a$ and $A_{uv}$ denotes the $(u,v)$-th element of matrix $A$. Furthermore, for any $1 \leq i,j \leq d$ ($i$ and $j$ can be equal), the coefficient of $\dfrac{\partial^{|\alpha|}{f}}{\partial{\mu^{\alpha_{1}}}\partial{\Sigma^{\alpha_{2}}}}(x|\mu_{n}^{*},\Sigma_{n}^{*})$ when $(\alpha_{1})_{i}=(\alpha_{1})_{j}=1$ and $\alpha_{2}=0$ leads to
\begin{eqnarray}
\biggr[(\lambda_{n}^{*}-\lambda_{n})(\Delta \mu_{n}^{*})_{i}(\Delta \mu_{n}^{*})_{j}+\lambda_{n}(\Delta\mu_{n} - \Delta \mu_{n}^{*})_{i}(\Delta \mu_{n} - \Delta \mu_{n}^{*})_{j}\biggr]/\mathcal{D}(G_{n},G_{*,n}) \to 0. \label{eqn:theorem_proof_second}
\end{eqnarray}
When $i=j$, the above limits lead to
\begin{eqnarray}
\biggr[(\lambda_{n}^{*}-\lambda_{n})\left\{(\Delta \mu_{n}^{*})_{i}\right\}^{2}+\lambda_{n}\left\{(\Delta\mu_{n} - \Delta \mu_{n}^{*})_{i}\right\}^{2}\biggr]/\mathcal{D}(G_{n},G_{*,n}) \to 0. \nonumber
\end{eqnarray}
Therefore, we would have
\begin{eqnarray}
\biggr[(\lambda_{n}^{*}-\lambda_{n})\|\Delta \mu_{n}^{*}\|^{2}+\lambda_{n}\|\Delta \mu_{n}-\Delta \mu_{n}^{*}\|^{2}\biggr]/\mathcal{D}(G_{n},G_{*,n}) \to 0. \label{eqn:theorem_proof_second_first}
\end{eqnarray}
Now, as $\biggr(\lambda_{n}(\Delta \mu_{n})_{i}-\lambda_{n}^{*}(\Delta \mu_{n}^{*})_{i}\biggr)/\mathcal{D}(G_{n},G_{*,n}) \to 0$ we obtain that
\begin{eqnarray}
\biggr(\lambda_{n}(\Delta \mu_{n})_{i}(\Delta \mu_{n})_{j}-\lambda_{n}^{*}(\Delta \mu_{n}^{*})_{i}(\Delta \mu_{n})_{j}\biggr)/\mathcal{D}(G_{n},G_{*,n}) & \to & 0, \nonumber \\
\biggr(\lambda_{n}(\Delta \mu_{n})_{i}(\Delta \mu_{n}^{*})_{j}-\lambda_{n}^{*}(\Delta \mu_{n}^{*})_{i}(\Delta \mu_{n}^{*})_{j}\biggr)/\mathcal{D}(G_{n},G_{*,n}) & \to & 0. \label{eqn:theorem_proof_third}
\end{eqnarray}
Plugging the results from \eqref{eqn:theorem_proof_third} into \eqref{eqn:theorem_proof_second}, we ultimately achieve for any $1 \leq i,j \leq d$ that
\begin{eqnarray}
(\lambda_{n}^{*}-\lambda_{n})(\Delta \mu_{n}^{*})_{i}(\Delta \mu_{n})_{j}/\mathcal{D}(G_{n},G_{*,n}) \to 0. \label{eqn:theorem_proof_third_first}
\end{eqnarray}
Using the results from \eqref{eqn:theorem_proof_second} and \eqref{eqn:theorem_proof_third_first}, we would have
\begin{eqnarray}
\dfrac{\lambda_{n}(\Delta \mu_{n})_{i}(\Delta \mu_{n}-\Delta \mu_{n}^{*})_{j}}{\mathcal{D}(G_{n},G_{*,n})} \to \dfrac{(\lambda_{n}^{*}-\lambda_{n})(\Delta \mu_{n})_{i}(\Delta \mu_{n}^{*})_{j}}{\mathcal{D}(G_{n},G_{*,n})} \to 0, \nonumber \\
\dfrac{\lambda_{n}^{*}(\Delta \mu_{n}^{*})_{i}(\Delta \mu_{n}-\Delta \mu_{n}^{*})_{j}}{\mathcal{D}(G_{n},G_{*,n})} \to \dfrac{(\lambda_{n}^{*}-\lambda_{n})(\Delta \mu_{n}^{*})_{i}(\Delta \mu_{n})_{j}}{\mathcal{D}(G_{n},G_{*,n})} \to 0 \nonumber
\end{eqnarray}
for any $1 \leq i,j \leq d$. Therefore, it leads to
\begin{eqnarray}
\dfrac{\sum \limits_{1 \leq i,j \leq d}{\lambda_{n}|(\Delta \mu_{n})_{i}||(\Delta \mu_{n}-\Delta \mu_{n}^{*})_{j}|}}{\mathcal{D}(G_{n},G_{*,n})}=\dfrac{\lambda_{n}\sum \limits_{1 \leq i \leq d}{|(\Delta \mu_{n})_{i}|}\sum \limits_{1 \leq i \leq d}{|(\Delta \mu_{n}-\Delta \mu_{n}^{*})_{i}|}}{\mathcal{D}(G_{n},G_{*,n})} \to 0, \nonumber \\
\dfrac{\sum \limits_{1 \leq i,j \leq d} \lambda_{n}^{*}|(\Delta \mu_{n}^{*})_{i}||(\Delta \mu_{n}-\Delta \mu_{n}^{*})_{j}|}{\mathcal{D}(G_{n},G_{*,n})}=\dfrac{\lambda_{n}^{*}\sum \limits_{1 \leq i \leq d}{|(\Delta \mu_{n})_{i}^{*}|}\sum \limits_{1 \leq i \leq d}{|(\Delta \mu_{n}-\Delta \mu_{n}^{*})_{i}|}}{\mathcal{D}(G_{n},G_{*,n})} \to 0. \nonumber
\end{eqnarray}
The above results mean that 
\begin{eqnarray}
\lambda_{n}\|\Delta \mu_{n}\|\|\Delta \mu_{n}-\Delta \mu_{n}^{*}\|/\mathcal{D}(G_{n},G_{*,n}) \to 0, \ \lambda_{n}^{*}\|\Delta \mu_{n}^{*}\|\|\Delta \mu_{n}-\Delta \mu_{n}^{*}\|/\mathcal{D}(G_{n},G_{*,n}) \to 0. \label{eqn:theorem_proof_third_second}
\end{eqnarray}
By applying the above argument with the coefficients of $\dfrac{\partial^{|\alpha|}{f}}{\partial{\mu^{\alpha_{1}}}\partial{\Sigma^{\alpha_{2}}}}(x|\mu_{n}^{*},\Sigma_{n}^{*})$ when $\alpha_{1}=0$ and $(\alpha_{2})_{u_{1}v_{1}}=(\alpha_{2})_{u_{2}v_{2}}=1$ for any two pairs $(u_{1},v_{1}), (u_{2},v_{2})$ (not neccessarily distinct) such that $1 \leq u_{1},u_{2},v_{1},v_{2} \leq d$ or $(\alpha_{1})_{i}=1$ and $(\alpha_{2})_{uv}=1$ for any $1 \leq i \leq d$ and $1 \leq u,v \leq d$, we respectively obtain that 
\begin{eqnarray}
\biggr[(\lambda_{n}^{*}-\lambda_{n})\|\Delta \Sigma_{n}^{*}\|^{2}+\lambda_{n}\|\Delta \Sigma_{n}-\Delta \Sigma_{n}^{*}\|^{2}\biggr]/\mathcal{D}(G_{n},G_{*,n}) \to 0, \nonumber \\
\lambda_{n}\|\Delta \Sigma_{n}\|\|\Delta \Sigma_{n}-\Delta \Sigma_{n}^{*}\|/\mathcal{D}(G_{n},G_{*,n}) \to 0, \ \lambda_{n}^{*}\|\Delta \Sigma_{n}^{*}\|\|\Delta \Sigma_{n}-\Delta \Sigma_{n}^{*}\|/\mathcal{D}(G_{n},G_{*,n}) \to 0, \nonumber \\
\lambda_{n}\|\Delta \mu_{n}\|\|\Delta \Sigma_{n}-\Delta \Sigma_{n}^{*}\|/\mathcal{D}(G_{n},G_{*,n}) \to 0, \ \lambda_{n}^{*}\|\Delta \mu_{n}^{*}\|\|\Delta \Sigma_{n}-\Delta \Sigma_{n}^{*}\|/\mathcal{D}(G_{n},G_{*,n}) \to 0. \label{eqn:theorem_proof_fourth}
\end{eqnarray}
Combining the results from \eqref{eqn:theorem_proof_second_first}, \eqref{eqn:theorem_proof_third_second}, and \eqref{eqn:theorem_proof_fourth} leads to
\begin{eqnarray}
1 = \mathcal{D}(G_{n},G_{*,n})/\mathcal{D}(G_{n},G_{*,n}) \to 0, \nonumber
\end{eqnarray}
which is a contradiction. As a consequence, not all the coefficients of $\dfrac{\partial^{|\alpha|}{f}}{\partial{\mu^{\alpha_{1}}}\partial{\Sigma^{\alpha_{2}}}}(x|\mu_{n}^{*},\Sigma_{n}^{*})$ go to 0 as $1 \leq |\alpha| \leq 2$. Follow the argument of Proposition \ref{proposition:strong_identifiable_model_distinguishable}, by denoting $m_{n}$ to be the maximum of the absolute values of the coefficients of $\dfrac{\partial^{|\alpha|}{f}}{\partial{\mu^{\alpha_{1}}}\partial{\Sigma^{\alpha_{2}}}}(x|\mu_{n}^{*},\Sigma_{n}^{*})$ we achieve for all $x$ that
\begin{eqnarray}
\dfrac{1}{m_{n}}\dfrac{p_{G_{n}}(x)-p_{G_{*,n}}(x)}{W_{2}^{2}(G_{n},G_{*,n})} \to \sum \limits_{|\alpha|=1}^{2}{\tau_{\alpha}\dfrac{\partial^{|\alpha|}{f}}{\partial{\mu^{\alpha_{1}}}\partial{\Sigma^{\alpha_{2}}}}(x|\mu_{0},\Sigma_{0})}= 0 \nonumber
\end{eqnarray}
where $\tau_{\alpha} \in \mathbb{R}$ are some coefficients such that not all of them are 0. Due to the second order identifiability condition of $f$, the above equation implies that $\tau_{\alpha}=0$ for all $\alpha$ such that $|\alpha|=2$, which is a contradiction. As a consequence, Case 1 cannot happen.
\paragraph{Case 2:} Exactly one of $A_{n}$ and $B_{n}$ goes to 0, i.e., there exists at least one component among $(\mu_{n},\Sigma_{n})$ and $(\mu^{*}_{n},\Sigma^{*}_{n})$ that does not converge to $(\mu_{0},\Sigma_{0})$ as $n \to \infty$. Due to the symmetry of $A_{n}$ and $B_{n}$, we assume without loss of generality that $A_{n} \not \to 0$ and $B_{n} \to 0$, which is equivalent to $(\mu_{n},\Sigma_{n}) \to  (\mu',\Sigma') \neq (\mu_{0},\Sigma_{0})$ while $(\mu^{*}_{n},\Sigma_{n}^{*}) \to (\mu_{0},\Sigma_{0})$ as $n \to \infty$. We denote
\begin{eqnarray}
\mathcal{D}'(G_{n},G_{*,n})=|\lambda_{n}^{*}-\lambda_{n}|B_{n}+\lambda_{n}A_{n}+\lambda_{n}^{*}B_{n}. \nonumber
\end{eqnarray}
Since $[p_{G_{n}}(x)-p_{G_{*,n}}(x)]/\mathcal{D}(G_{n},G_{*,n}) \to 0$, we achieve that $[p_{G_{n}}(x)-p_{G_{*,n}}(x)]/\mathcal{D}'(G_{n},G_{*,n})$ \\ $\to 0$ for all $x$ as $\mathcal{D}(G_{n},G_{*,n}) \lesssim \mathcal{D}'(G_{n},G_{*,n})$. By means of Taylor expansion up to the first order, we have
\begin{eqnarray}
\dfrac{p_{G_{n}}(x)-p_{G_{*,n}}(x)}{\mathcal{D}'(G_{n},G_{*,n})} & = & \dfrac{(\lambda^{*}_{n}-\lambda_{n})[f(x|\mu_{0},\Sigma_{0})-f(x|\mu^{*}_{n},\Sigma_{n}^{*})]+\lambda_{n}f(x|\mu_{n},\Sigma_{n})-\lambda_{n}f(x|\mu^{*}_{n},\Sigma_{n}^{*})}{\mathcal{D}'(G_{n},G_{*,n})} \nonumber \\
& = & \dfrac{(\lambda^{*}_{n}-\lambda_{n})\biggr(\sum \limits_{|\alpha|=1} \dfrac{(-\Delta \mu_{n}^{*})^{\alpha_{1}}(-\Delta \Sigma_{n}^{*})^{\alpha_{2}}}{\alpha!}\dfrac{\partial {f}}{\partial{\mu^{\alpha_{1}}}\partial{\Sigma^{\alpha_{2}}}}(x|\mu_{n}^{*},\Sigma_{n}^{*})+R_{1}'(x)\biggr)}{\mathcal{D}'(G_{n},G_{*,n})} \nonumber \\
& + & \dfrac{\lambda_{n}f(x|\mu_{n},\Sigma_{n})-\lambda_{n}f(x|\mu^{*}_{n},\Sigma_{n}^{*})}{\mathcal{D}'(G_{n},G_{*,n})} \nonumber
\end{eqnarray}
where $R_{1}'(x)$ is Taylor remainder that satisfies $(\lambda_{n}^{*}-\lambda_{n})|R_{1}'(x)|/\mathcal{D}'(G_{n},G_{*,n})=O(B_{n}^{\gamma'}) \to 0$ for some positive number $\gamma'>0$. Since $(\mu_{n},\Sigma_{n})$ and $(\mu_{n}^{*},\Sigma_{n}^{*})$ do not have the same limit, they will be different when $n$ is large enough, i.e., $n \geq M'$ for some value of $M'$. Now, as $n \geq M'$, $[p_{G_{n}}(x)-p_{G_{*,n}}(x)]/\mathcal{D}'(G_{n},G_{*,n})$ becomes a linear combination of $\dfrac{\partial {f}}{\partial{\mu^{\alpha_{1}}}\partial{\Sigma^{\alpha_{2}}}}(x|\mu_{n}^{*},\Sigma_{n}^{*})$ for all $|\alpha| \leq 1$ and $f(x|\mu_{n},\Sigma_{n})$. If all of the coefficients of these terms go to 0, we would have $\lambda_{n}/\mathcal{D}'(G_{n},G_{*,n}) \to 0$, $(\lambda^{*}_{n}-\lambda_{n})(-\Delta \mu_{n}^{*})_{i}/\mathcal{D}'(G_{n},G_{*,n}) \to 0$, and $(\lambda^{*}_{n}-\lambda_{n})(-\Delta \Sigma_{n}^{*})_{uv}/\mathcal{D}'(G_{n},G_{*,n}) \to 0$ for all $1 \leq i \leq d_{1}$ and $1 \leq u,v \leq d_{2}$. It would imply that $(\lambda_{n}^{*}-\lambda_{n})B_{n}/\mathcal{D}'(G_{n},G_{*,n}) \to 0$, $\lambda_{n}A_{n}/\mathcal{D}'(G_{n},G_{*,n}) \to 0$, and $\lambda_{n}B_{n}/\mathcal{D}'(G_{n},G_{*,n}) \to 0$. These results lead to
\begin{eqnarray}
1=\biggr(|\lambda^{*}_{n}-\lambda_{n}|B_{n}+\lambda_{n}A_{n}+\lambda_{n}^{*}B_{n}\biggr)/\mathcal{D}'(G_{n},G_{*,n}) \to 0, \nonumber
\end{eqnarray}
a contradiction. Therefore, not all the coefficients of $\dfrac{\partial {f}}{\partial{\mu^{\alpha_{1}}}\partial{\Sigma^{\alpha_{2}}}}(x|\mu_{n}^{*},\Sigma_{n}^{*})$ and $f(x|\mu_{n},\Sigma_{n})$ go to 0. By defining $m_{n}'$ to be the maximum of these coefficients, we achieve for all $x$ that 
\begin{eqnarray}
\dfrac{1}{m_{n}'}\dfrac{p_{G_{n}}(x)-p_{G_{*,n}}(x)}{\mathcal{D}'(G_{n},G_{*,n})}  \to \eta'f(x|\mu_{0},\Sigma_{0})+\sum \limits_{|\alpha|=0}^{1}{\tau_{\alpha}'\dfrac{\partial^{|\alpha|}{f}}{\partial{\mu^{\alpha_{1}}}\partial{\Sigma^{\alpha_{2}}}}(x|\mu',\Sigma')}=0, \nonumber
\end{eqnarray}
where $\eta'$ and $\tau_{\alpha}'$ are coefficients such that not all of them are 0, which is a contradiction to the first order identifiability of $f$. As a consequence, Case 2 cannot hold.
\paragraph{Case 3:} Both $A_{n}$ and $B_{n}$ do not go to 0, i.e., $(\mu_{n},\Sigma_{n})$ and $(\mu^{*}_{n},\Sigma^{*}_{n})$ do not converge to $(\mu_{0},\Sigma_{0})$ as $n \to \infty$. Since $\mathcal{D}_{n}(G_{n},G_{*,n}) \lesssim \mathcal{K}(G_{n},G_{*,n})=|\lambda_{n}-\lambda_{n}^{*}|+(\lambda_{n}+\lambda_{n}^{*})C_{n}$ and $[p_{G_{n}}(x)-p_{G_{*,n}}(x)]/\mathcal{D}(G_{n},G_{*,n}) \to 0$, we achieve that $[p_{G_{n}}(x)-p_{G_{*,n}}(x)]/\mathcal{K}(G_{n},G_{*,n}) \to 0$ for all $x$.  From here, by using the same argument as that of the proof of Proposition \ref{proposition:strong_identifiable_model_distinguishable}, we also reach the contradiction. Therefore, Case 3 cannot happen.

In sum, we achieve the conclusion of the proposition.
\end{proof}
\subsection{Proof of Theorem~\ref{theorem:weakly_identifiable_Gaussian}}
\label{subsec:proof:theorem:weakly_identifiable_Gaussian}
For the simplicity of proof argument, we will only consider the univariate setting of Gaussian kernel, i.e., when both $\mu$ and $\Sigma = \sigma^{2}$ are scalars. The argument for the multivariate setting of Gaussian kernel can be argued in the rather similar fashion, which is omitted. Throughout this proof, we denote $v := \sigma^{2}$. Now, according to the proof argument of Proposition \ref{proposition:strong_identifiable_model_distinguishable} and Proposition \ref{proposition:strong_identifiable_model}, to achieve the conclusion of the theorem it suffices to demonstrate the following result:
\begin{proposition} \label{proposition:Gaussian_model}
Given $\overline{G}=(\overline{\lambda},\overline{\mu},\overline{v})$ such that $\overline{\lambda} \in [0,1]$ and $(\overline{\mu},\overline{v})$ can be identical to $ (\mu_{0},v_{0})$. Then, the following holds
\begin{itemize}
\item[(a)] If $ (\mu_{0},v_{0}) \neq (\overline{\mu},\overline{v})$ and $\overline{\lambda}>0$, then
\begin{eqnarray}
\lim \limits_{\epsilon \to 0}\inf \limits_{G,G_{*}}{\left\{\dfrac{\|p_{G}-p_{G_{*}}\|_{\infty}}{\mathcal{K}(G,G_{*})}: \ \mathcal{K}(G,\overline{G}) \vee \mathcal{K}(G_{*},\overline{G}) \leq \epsilon\right\}} > 0. \nonumber
\end{eqnarray}
\item[(b)] If $ (\mu_{0},v_{0}) \equiv (\overline{\mu},\overline{v})$ or $ (\mu_{0},v_{0}) \neq (\overline{\mu},\overline{v})$ and $\overline{\lambda}=0$, then
\begin{eqnarray}
\lim \limits_{\epsilon \to 0}\inf \limits_{G,G_{*}}{\left\{\dfrac{\|p_{G}-p_{G_{*}}\|_{\infty}}{\mathcal{Q}(G,G_{*})}: \ \mathcal{Q}(G,\overline{G}) \vee \mathcal{Q}(G_{*},\overline{G}) \leq \epsilon\right\}} > 0. \nonumber
\end{eqnarray}
\end{itemize}
\end{proposition}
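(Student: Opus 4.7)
I mirror the architecture of Proposition~\ref{proposition:strong_identifiable_model}. First, reducing $V(p_{G},p_{G_{*}})$ to $\|p_{G}-p_{G_{*}}\|_{\infty}$ via the Fatou argument used in the proof of Theorem~\ref{theorem:strong_identifiable_model_distinguishable} transfers verbatim, so it suffices to prove each stated lower bound with $\|\cdot\|_{\infty}$ in the numerator. Part~(a) is then an immediate adaptation of Proposition~\ref{proposition:strong_identifiable_model_distinguishable}: when $(\mu_{0},v_{0})\neq(\overline{\mu},\overline{v})$ and $\overline{\lambda}>0$, both $G$ and $G_{*}$ remain bounded away from $\lambda=0$ and from $(\mu_{0},v_{0})$, and a first-order Taylor expansion at $(\overline{\mu},\overline{v})$ together with the linear independence of $\{f(\cdot|\mu_{0},v_{0}),\,f(\cdot|\overline{\mu},\overline{v}),\,\partial_{\mu}f(\cdot|\overline{\mu},\overline{v}),\,\partial_{v}f(\cdot|\overline{\mu},\overline{v})\}$ (a short computation from the explicit Gaussian form) yields the bound under $\mathcal{K}$.

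For part~(b), I argue by contradiction: suppose there are sequences $G_{n}=(\lambda_{n},\mu_{n},v_{n})$ and $G_{*,n}=(\lambda_{n}^{*},\mu_{n}^{*},v_{n}^{*})$ with $\mathcal{Q}(G_{n},\overline{G})\vee\mathcal{Q}(G_{*,n},\overline{G})\to 0$ and $\|p_{G_{n}}-p_{G_{*,n}}\|_{\infty}/\mathcal{Q}(G_{n},G_{*,n})\to 0$. Set $A_{n}:=\|\Delta\mu_{n}\|^{2}+\|\Delta v_{n}\|$, $B_{n}:=\|\Delta\mu_{n}^{*}\|^{2}+\|\Delta v_{n}^{*}\|$ and $C_{n}:=\|(\mu_{n},v_{n})-(\mu_{n}^{*},v_{n}^{*})\|$. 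Split into three cases according to whether both, exactly one, or neither of $A_{n},B_{n}$ vanishes. The cases in which at least one of $A_{n},B_{n}$ is bounded away from $0$ push the relevant limit point away from $(\mu_{0},v_{0})$ and are closed by the first-order distinguishability step copied from Cases~2 and~3 of Proposition~\ref{proposition:strong_identifiable_model}. The main work is Case~1, in which both $(\mu_{n},v_{n})$ and $(\mu_{n}^{*},v_{n}^{*})$ tend to $(\mu_{0},v_{0})$.

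In Case~1, the heat equation
\[
\tfrac{\partial^{2}f}{\partial\mu^{2}}(x|\mu,v)=2\,\tfrac{\partial f}{\partial v}(x|\mu,v),\qquad\text{hence}\qquad\tfrac{\partial^{a+b}f}{\partial\mu^{a}\partial v^{b}}=2^{-b}\,\tfrac{\partial^{a+2b}f}{\partial\mu^{a+2b}},
\]
motivates assigning weight $1$ to $\Delta\mu$ and weight $2$ to $\Delta v$. To reach the $\|\Delta\mu\|^{4}$ and $\|\Delta v\|^{2}$ scales in $\mathcal{Q}$, I Taylor-expand both $f(x|\mu_{n},v_{n})-f(x|\mu_{n}^{*},v_{n}^{*})$ and $f(x|\mu_{0},v_{0})-f(x|\mu_{n}^{*},v_{n}^{*})$ around $(\mu_{n}^{*},v_{n}^{*})$ up to joint weighted order $4$, then apply the PDE to collapse the result into a linear combination of the pure location derivatives $\partial_{\mu}^{k}f(x|\mu_{n}^{*},v_{n}^{*})$ for $k=0,\dots,4$, whose coefficients are explicit polynomials in $(\lambda_{n},\lambda_{n}^{*},\Delta\mu_{n},\Delta\mu_{n}^{*},\Delta v_{n},\Delta v_{n}^{*})$. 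By smoothness of the Gaussian kernel, the Taylor remainders are $O(C_{n}^{4+\gamma})$ for some $\gamma>0$ and are absorbed into $o(\mathcal{Q}(G_{n},G_{*,n}))$ because $\mathcal{Q}$ carries the dominating cross term $(\lambda_{n}\|\Delta\mu_{n}\|^{2}+\lambda_{n}^{*}\|\Delta\mu_{n}^{*}\|^{2})\|\mu_{n}-\mu_{n}^{*}\|^{2}+(\lambda_{n}\|\Delta v_{n}\|+\lambda_{n}^{*}\|\Delta v_{n}^{*}\|)\|v_{n}-v_{n}^{*}\|$.

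I then run the two-alternative endgame from Proposition~\ref{proposition:strong_identifiable_model}. If every coefficient of $\partial_{\mu}^{k}f(x|\mu_{n}^{*},v_{n}^{*})$ divided by $\mathcal{Q}(G_{n},G_{*,n})$ tends to $0$, then reading off $k=1$ yields $(\lambda_{n}\Delta\mu_{n}-\lambda_{n}^{*}\Delta\mu_{n}^{*})/\mathcal{Q}\to 0$, while $k=2$ separates into $(\lambda_{n}\Delta v_{n}-\lambda_{n}^{*}\Delta v_{n}^{*})/\mathcal{Q}\to 0$ and the pure-$\mu$ quadratic $(\lambda_{n}(\Delta\mu_{n})^{2}-2\lambda_{n}\Delta\mu_{n}\Delta\mu_{n}^{*}+\lambda_{n}^{*}(\Delta\mu_{n}^{*})^{2})/\mathcal{Q}\to 0$; $k=3,4$ produce the analogous cubic/quartic identities in $\Delta\mu$ and the remaining $\Delta v$ cross terms. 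Chaining these identities as in the derivation of (\ref{eqn:theorem_proof_second_first})--(\ref{eqn:theorem_proof_fourth}) reassembles every summand of $\mathcal{Q}(G_{n},G_{*,n})$ and forces $1=\mathcal{Q}/\mathcal{Q}\to 0$, a contradiction. If instead some coefficient does not vanish, I normalize by the maximum absolute coefficient $m_{n}$ and pass to the limit, obtaining a nontrivial identity
\[
\sum_{k=0}^{4}\tau_{k}\,\partial_{\mu}^{k}f(x|\mu_{0},v_{0})=0\qquad\text{for almost every }x,
\]
which contradicts the linear independence of the first five $\mu$-derivatives of the Gaussian density (equivalently, of Hermite polynomials of degrees $0$ through $4$). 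The main obstacle is precisely the algebraic bookkeeping in the first alternative: after the PDE-induced collapse, matching the coefficients of the $\partial_{\mu}^{k}f$ to the exact summands of $\mathcal{Q}$ is delicate, and this matching is what forces the asymmetric powers $\|\Delta\mu\|^{4}$ versus $\|\Delta v\|^{2}$ in the definition of $\mathcal{Q}$.
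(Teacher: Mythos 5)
Your high-level blueprint matches the paper's: reduce $V(p_G,p_{G_*})$ to $\|p_G-p_{G_*}\|_\infty$ via Fatou, argue by contradiction along sequences $G_n, G_{*,n}$, split on whether $(\mu_n,v_n)$ and $(\mu_n^*,v_n^*)$ approach $(\mu_0,v_0)$, then Taylor-expand, collapse via the heat PDE, and analyze the coefficients of the pure $\mu$-derivatives of $f$. But the alternative you flag as ``delicate''---the case where every normalized coefficient of $\partial_\mu^k f(x|\mu_n^*,v_n^*)$ tends to $0$---is precisely the core of the proof, and you do not close it. You claim that ``chaining these identities as in the derivation of \eqref{eqn:theorem_proof_second_first}--\eqref{eqn:theorem_proof_fourth}'' reassembles $\mathcal{Q}(G_n,G_{*,n})$, but those equations are from Proposition~\ref{proposition:strong_identifiable_model}, where second-order strong identifiability makes each mixed derivative $\partial^{|\alpha|}f/\partial\mu^{\alpha_1}\partial\Sigma^{\alpha_2}$ with $|\alpha|\leq 2$ a distinct basis element, so a coefficient condition isolates a single Taylor term. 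In the Gaussian case the heat PDE collapses distinct Taylor terms onto the same $\partial_\mu^k$: the coefficient of $\partial_\mu^2 f$ already mixes $\lambda(\Delta\mu-\Delta\mu^*)^2$ with $\lambda(\Delta v-\Delta v^*)$, and analogously for $k\geq 3$, so there is no direct read-off of the individual $\mathcal{Q}$-summands (whose powers of $\Delta\mu$ and $\Delta v$ are asymmetric by design). The paper resolves this through an extensive case tree (Cases~1.1--2.2.2) that renormalizes each coefficient by a case-dependent scale, passes to limiting polynomial systems in auxiliary quantities $(a_i,b_i,c_i)$, $(m_i)$, $(k_i)$, $(l_i)$, and invokes the uniqueness-of-the-trivial-solution result of Proposition~2.1 in \cite{Ho-Nguyen-Ann-16}. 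Nothing in your proposal replaces or reproduces that analysis; labeling the ``algebraic bookkeeping'' an obstacle does not discharge it.

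A secondary discrepancy: you expand to weighted degree $4$ (assigning degree $1$ to $\Delta\mu$ and degree $2$ to $\Delta v$), so only $\partial_\mu^k$ with $k\leq 4$ appear; the paper takes a standard fourth-order Taylor expansion in $(\mu,v)$, which under the PDE produces $\partial_\mu^\beta$ for $\beta\leq 8$, and controls the remainder by the unweighted bound $O(\|(\Delta\mu,\Delta v)\|^{4+\gamma})$. Your weighted expansion might be salvageable, but it requires a weighted remainder estimate together with a verification that $\mathcal{Q}$ dominates it in every regime of $\|\Delta\mu\|$ relative to $\|\Delta v\|$, neither of which you supply.
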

\begin{proof} We will only provide the proof for part (b) since the proofs for part (a) can be argued in similar fashion as that of Proposition \ref{proposition:strong_identifiable_model_distinguishable}. Assume that the conclusion of Proposition \ref{proposition:Gaussian_model} does not hold. It implies that we can find two sequences $G_{n}=(\lambda_{n},\mu_{n},v_{n})$ and $G_{*,n}=(\lambda^{*}_{n},\mu^{*}_{n},v_{n}^{*})$ such that $\mathcal{Q}(G_{n},\overline{G}) \to 0$, $\mathcal{Q}(G_{*,n},\overline{G}) \to 0$, and $\|p_{G_{n}}-p_{G_{*,n}}\|_{\infty}/\mathcal{Q}(G_{n},G_{*,n}) \to 0$ as $n \to \infty$. Due to the symmetry between $\lambda_{n}$ and $\lambda_{n}^{*}$, we can assume without loss of generality that $\lambda_{n}^{*} \geq \lambda_{n}$. Therefore, we achieve that
\begin{eqnarray}
\mathcal{Q}(G_{n},G_{*,n})  =  (\lambda_{n}^{*}-\lambda_{n})(|\Delta \mu_{n}^{*}|^{4}+|\Delta v_{n}^{*}|^{2})+ \biggr(\lambda_{n}(|\Delta \mu_{n}|^{2}+|\Delta v_{n}|)+\lambda_{n}^{*}(|\Delta \mu_{n}^{*}|^{2}+|\Delta v_{n}^{*}|)\biggr) \times \nonumber \\
 \times  \biggr(|\mu_{n}-\mu_{n}^{*}|^{2}+|v_{n}-v_{n}^{*}|\biggr). \nonumber
\end{eqnarray}
In this proof, we only consider the scenario when $\|(\Delta \mu_{n},\Delta v_{n})\| \to 0$ and $\|(\Delta \mu_{n}^{*},\Delta v_{n}^{*})\| \to 0$ since the arguments for other settings of these two terms are similar to those of Case 2 and Case 3 in the proof of Proposition \ref{proposition:strong_identifiable_model}. As being indicated in Section \ref{Section:partially_weakly_identifiable}, the univariate Gaussian kernel contains the partial differential equation structure $\dfrac{\partial^{2}{f}}{\partial{\mu^{2}}}(x|\mu,v)=2\dfrac{\partial{f}}{\partial{v}}(x|\mu,v)$ for all $\mu \in \Theta$ and $v \in \Omega$. Therefore, for any $\alpha=(\alpha_{1},\alpha_{2})$ we can check that
\begin{eqnarray}
\dfrac{\partial^{|\alpha|}{f}}{\partial{\mu^{\alpha_{1}}}\partial{v^{\alpha_{2}}}}(x|\mu,v)=\dfrac{1}{2^{\alpha_{2}}}\dfrac{\partial^{\beta}{f}}{\partial{\mu^{\beta}}}(x|\mu,v) \nonumber
\end{eqnarray}
where $\beta=\alpha_{1}+2\alpha_{2}$. Now, by means of Taylor expansion up to the fourth order, we obtain
\begin{eqnarray}
\dfrac{p_{G_{n}}(x)-p_{G_{*,n}}(x)}{\mathcal{Q}(G_{n},G_{*,n})} & = & \dfrac{(\lambda_{n}^{*}-\lambda_{n})\biggr(\sum \limits_{|\alpha|=1}^{4}{\dfrac{(-\Delta \mu_{n}^{*})^{\alpha_{1}}(-\Delta v_{n}^{*})^{\alpha_{2}}}{\alpha_{1}!\alpha_{2}!}\dfrac{\partial^{|\alpha|}{f}}{\partial{\mu^{\alpha_{1}}}\partial{v^{\alpha_{2}}}}(x|\mu_{n}^{*},v_{n}^{*})+R_{1}(x)\biggr)}}{\mathcal{Q}(G_{n},G_{*,n})} \nonumber \\
& + & \dfrac{\lambda_{n}\biggr(\sum \limits_{|\alpha|=1}^{4}{\dfrac{(\Delta \mu_{n}-\Delta \mu_{n}^{*})^{\alpha_{1}}(\Delta v_{n}-\Delta v_{n}^{*})^{\alpha_{2}}}{\alpha_{1}!\alpha_{2}!}\dfrac{\partial^{|\alpha|}{f}}{\partial{\mu^{\alpha_{1}}}\partial{v^{\alpha_{2}}}}(x|\mu_{n}^{*},v_{n}^{*})+R_{2}(x)\biggr)}}{\mathcal{Q}(G_{n},G_{*,n})} \nonumber \\
& = & \sum \limits_{\beta=1}^{8}\sum \limits_{\alpha_{1},\alpha_{2}}{\dfrac{(\lambda_{n}^{*}-\lambda_{n})(-\Delta \mu_{n}^{*})^{\alpha_{1}}(-\Delta v_{n}^{*})^{\alpha_{2}}+\lambda_{n}(\Delta \mu_{n}-\Delta \mu_{n}^{*})^{\alpha_{1}}(\Delta v_{n}-\Delta v_{n}^{*})^{\alpha_{2}}}{2^{\alpha_{2}}\alpha_{1}!\alpha_{2}!\mathcal{Q}(G_{n},G_{*,n})}}  \nonumber \\
& \times & \dfrac{\partial^{\beta}{f}}{\partial{\mu^{\beta}}}(x|\mu_{n}^{*},v_{n}^{*})+\dfrac{(\lambda_{n}^{*}-\lambda_{n})R_{1}(x)+\lambda_{n}R_{2}(x)}{\mathcal{Q}(G_{n},G_{*,n})} \nonumber
\end{eqnarray}
where $R_{1}(x), R_{2}(x)$ are Taylor remainders and the range of $\alpha_{1}, \alpha_{2}$ in the summation of the second equality satisfies $\beta=\alpha_{1}+2\alpha_{2}$. As Gaussian kernel admits fourth-order uniform Lipschitz condition, it is clear that 
\begin{eqnarray}
\dfrac{(\lambda_{n}^{*}-\lambda_{n})|R_{1}(x)|+\lambda_{n}|R_{2}(x)|}{\mathcal{Q}(G_{n},G_{*,n})} = \mathcal{O}(\|(\Delta \mu_{n}^{*},\Delta v_{n}^{*})\|^{\gamma}+\|(\mu_{n},v_{n})-(\mu_{n}^{*},v_{n}^{*})\|^{\gamma}) \to 0 \nonumber
\end{eqnarray} 
as $n \to \infty$ for some $\gamma>0$. Therefore, we can consider $[p_{G_{n}}(x)-p_{G_{*,n}}(x)]/\mathcal{Q}(G_{n},G_{*,n})$ as a linear combination of $\dfrac{\partial^{\beta}{f}}{\partial{\mu^{\beta}}}(x|\mu_{n}^{*},v_{n}^{*})$ for $1 \leq \beta \leq 8$. If all of the coefficients of these terms go to 0, then we obtain
\begin{eqnarray}
L_{\beta} =\dfrac{\sum \limits_{\alpha_{1},\alpha_{2}}{\dfrac{(\lambda_{n}^{*}-\lambda_{n})(-\Delta \mu_{n}^{*})^{\alpha_{1}}(-\Delta v_{n}^{*})^{\alpha_{2}}+\lambda_{n}(\Delta \mu_{n}-\Delta \mu_{n}^{*})^{\alpha_{1}}(\Delta v_{n}-\Delta v_{n}^{*})^{\alpha_{2}}}{2^{|\alpha_{2}|}\alpha_{1}!\alpha_{2}!}}}{\mathcal{Q}(G_{n},G_{*,n})} \to 0 \nonumber
\end{eqnarray}
for any $1 \leq \beta \leq 8$. Now, we divide our argument with $L_{\beta}$ into two key cases
\paragraph{Case 1:} $\biggr(\lambda_{n}(|\Delta \mu_{n}|^{2}+|\Delta v_{n}|)+\lambda_{n}^{*}|(\Delta \mu_{n}^{*}|^{2}+|\Delta v_{n}^{*}|)\biggr)/\biggr\{\lambda_{n}(|\mu_{n}-\mu_{n}^{*}|^{2}+|v_{n}-v_{n}^{*}|)\biggr\} \not \to \infty$. It implies that as $n$ is large enough, we would have
\begin{eqnarray}
Q(G_{n},G_{*,n}) \lesssim (\lambda_{n}^{*}-\lambda_{n})(|\Delta \mu_{n}^{*}|^{4}+|\Delta v_{n}^{*}|^{2})+\lambda_{n}(|\Delta \mu_{n}-\Delta \mu_{n}^{*}|^{4}+|\Delta v_{n}-\Delta v_{n}^{*}|^{2}). \nonumber
\end{eqnarray}
Combining the above result with $L_{\beta} \to 0$ for all $1 \leq \beta \leq 8$, we get 
\begin{eqnarray}
H_{\beta}=\dfrac{\sum \limits_{\alpha_{1},\alpha_{2}}{\dfrac{(\lambda_{n}^{*}-\lambda_{n})(-\Delta \mu_{n}^{*})^{\alpha_{1}}(-\Delta v_{n}^{*})^{\alpha_{2}}+\lambda_{n}(\Delta \mu_{n}-\Delta \mu_{n}^{*})^{\alpha_{1}}(\Delta v_{n}-\Delta v_{n}^{*})^{\alpha_{2}}}{2^{|\alpha_{2}|}\alpha_{1}!\alpha_{2}!}}}{(\lambda_{n}^{*}-\lambda_{n})(|\Delta \mu_{n}^{*}|^{4}+|\Delta v_{n}^{*}|^{2})+\lambda_{n}(|\Delta \mu_{n}-\Delta \mu_{n}^{*}|^{4}+|\Delta v_{n}-\Delta v_{n}^{*}|^{2})} \to 0  \nonumber
\end{eqnarray}
Note that, when the denominator of the above limits is $(\lambda_{n}^{*}-\lambda_{n})(|\Delta \mu_{n}^{*}|^{4}+|\Delta v_{n}^{*}|^{4})+\lambda_{n}(|\Delta \mu_{n}-\Delta \mu_{n}^{*}|^{4}+|\Delta v_{n}-\Delta v_{n}^{*}|^{4})$, the technique for studying the above system of limits with this denominator has been considered in Proposition 2.3 in \cite{Ho-Nguyen-Ann-16}. However, since the current denominator of $H_{\beta}$ strongly dominates by the previous denominator, we must develop a more sophisticated control of $H_{\beta}$ as $1 \leq \beta \leq 8$ to obtain a concrete understanding of their limits. Due to the symmetry between $\lambda_{n}^{*}-\lambda_{n}$ and $\lambda_{n}$, we assume without loss of generality that $\lambda_{n}^{*}-\lambda_{n} \leq \lambda_{n}$ for all $n$ (by the subsequence argument). We have two possibilities regarding $\lambda_{n}$ and $\lambda_{n}^{*}$
\paragraph{Case 1.1:} $(\lambda_{n}^{*}-\lambda_{n})/\lambda_{n} \not \to 0$ as $n \to \infty$. Under that setting, we define $p_{n}=\max {\left\{\lambda_{n}^{*}-\lambda_{n},\lambda_{n} \right\}}$ and 
\begin{eqnarray}
M_{n} = \max {\left\{|\Delta \mu_{n}^{*}|, |\Delta \mu_{n}^{*}-\Delta \mu_{n}|, |\Delta v_{n}^{*}|^{1/2}, |\Delta v_{n}^{*}-\Delta v_{n}|^{1/2}\right\}} \nonumber
\end{eqnarray}
Additionally, we let $(\lambda_{n}^{*}-\lambda_{n})/p_{n} \to c_{1}^{2}$, $\lambda_{n}/p_{n} \to c_{2}^{2}$, $\Delta \mu_{n}^{*}/M_{n} \to -a_{1}$, $(\Delta \mu_{n}^{*}-\Delta \mu_{n})/M_{n} \to a_{2}$, $\Delta v_{n}^{*}/M_{n}^{2} \to -2b_{1}$, and $(\Delta v_{n}-\Delta v_{n}^{*})/M_{n}^{2} \to 2b_{2}$. From here, at least one among $a_{1},a_{2},b_{1},b_{2}$ and both $c_{1},c_{2}$ are different from 0. Now, by dividing both the numerators and the denominators of $H_{\beta}$ as $1 \leq \beta \leq 4$ by $p_{n}M_{n}^{\beta}$, we achieve the following system of polynomial equations
\begin{eqnarray}
& &c_{1}^{2}a_{1}+c_{2}^{2}a_{2}=0 \nonumber \\ 
& &\dfrac{1}{2}(c_{1}^{2}a_{1}^{2}+c_{2}^{2}a_{2}^{2})+ c_{1}^{2}b_{1}+c_{2}^{2}b_{2} = 0  \nonumber \\ 
& &\dfrac{1}{3!}(c_{1}^{2}a_{1}^{3}+c_{2}^{2}a_{2}^{3})+ c_{1}^{2}a_{1}b_{1}+c_{2}^{2}a_{2}b_{2} =0 \nonumber \\ 
& &\dfrac{1}{4!}(c_{1}^{2}a_{1}^{4}+c_{2}^{2}a_{2}^{4})+\dfrac{1}{2!}(c_{1}^{2}a_{1}^{2}b_{1}+c_{2}^{2}a_{2}^{2}b_{2})+\dfrac{1}{2!}(c_{1}^{2}b_{1}^{2}+c_{2}^{2}b_{2}^{2})=0, \nonumber
\end{eqnarray}
As being indicated in Proposition 2.1 in \cite{Ho-Nguyen-Ann-16}, this system will only admits the trivial solution, i.e., $a_{1}=a_{2}=b_{1}=b_{2}=0$, which is a contradiction. Therefore, Case 1.1 cannot happen.
\paragraph{Case 1.2:} $(\lambda_{n}^{*}-\lambda_{n})/\lambda_{n} \to 0$, i.e., $\lambda_{n}^{*}/\lambda_{n} \to 1$, as $n \to \infty$. Under that setting, if $M_{n} \in \max {\left\{|\Delta \mu_{n}-\Delta \mu_{n}^{*}|, |\Delta v_{n}-\Delta v_{n}^{*}|^{1/2}\right\}}$, then we have 
\begin{eqnarray}
\lambda_{n}M_{n}^{4}=\max \biggr\{(\lambda_{n}^{*}-\lambda_{n})|\Delta \mu_{n}^{*}|^{4},(\lambda_{n}^{*}-\lambda_{n})|\Delta \mu_{n}-\Delta \mu_{n}^{*}|^{4}, \lambda_{n}|\Delta v_{n}^{*}|^{2}, \lambda_{n}|\Delta v_{n}-\Delta v_{n}^{*}|^{2}\biggr\}. \nonumber
\end{eqnarray}
By dividing both the numerator and the denominator of $H_{1}$ by $\lambda_{n}M_{n}$, given that the new denominator of $H_{1}$ goes to 0, its new numerator also goes to 0, i.e., we obtain
\begin{eqnarray}
(\lambda_{n}^{*}-\lambda_{n})(-\Delta \mu_{n}^{*})/\left\{\lambda_{n}M_{n}\right\}+(\Delta \mu_{n}-\Delta \mu_{n}^{*})/M_{n} \to 0. \nonumber
\end{eqnarray}
Since $(\lambda_{n}^{*}-\lambda_{n})/\lambda_{n} \to 0$ and $|\Delta \mu_{n}^{*}| \leq M_{n}$, we have $(\lambda_{n}^{*}-\lambda_{n})(-\Delta \mu_{n}^{*})/\left\{\lambda_{n}M_{n}\right\} \to 0$. Therefore, we have $(\Delta \mu_{n}-\Delta \mu_{n}^{*})/M_{n} \to 0$. With the previous results, by dividing both the numerator and the denominator of $H_{2}$ by $\lambda_{n}M_{n}^{2}$ and given that the new denominator goes to 0, we have
\begin{eqnarray}
(\lambda_{n}^{*}-\lambda_{n})(-\Delta v_{n}^{*})/\left\{\lambda_{n}M_{n}^{2}\right\}+(\Delta v_{n}-\Delta v_{n}^{*})/M_{n}^{2} \to 0. \nonumber
\end{eqnarray}
As $(\lambda_{n}^{*}-\lambda_{n})(-\Delta v_{n}^{*})/\left\{\lambda_{n}M_{n}^{2}\right\} \to 0$ (due to the assumption of $M_{n}$), we get $(\Delta v_{n}-\Delta v_{n}^{*})/M_{n}^{2} \to 0$. These results imply that 
\begin{eqnarray}
1=\dfrac{\max {\left\{|\Delta \mu_{n}-\Delta \mu_{n}^{*}|^{2}, |\Delta v_{n}-\Delta v_{n}^{*}| \right\}}}{M_{n}^{2}} \to 0, \nonumber
\end{eqnarray}
which is a contradiction. Therefore, we would only have $M_{n} \in \max {\left\{|\Delta \mu_{n}^{*}|, |\Delta v_{n}^{*}|^{1/2}\right\}}$. For the simplicity of the proof, we only consider the setting when $M_{n}=|\Delta \mu_{n}^{*}|$ for all $n$ (by subsequence argument). The setting that $M_{n}=|\Delta v_{n}^{*}|^{1/2}$ for all $n$ can be argued in the similar fashion. Now, if we have 
\begin{eqnarray}
\max {\left\{|\Delta \mu_{n}-\Delta \mu_{n}^{*}|, |\Delta v_{n}-\Delta v_{n}^{*}|^{1/2}\right\}}/M_{n} \not \to 0, \nonumber
\end{eqnarray}
then by dividing the numerator and denominator of $H_{i}$ with $\lambda_{n}\left(\max {\left\{|\Delta \mu_{n}-\Delta \mu_{n}^{*}|, |\Delta v_{n}-\Delta v_{n}^{*}|^{1/2}\right\}}\right)^{i}$ as $1 \leq i \leq 2$, we would achieve 
\begin{eqnarray}
1=\dfrac{\max {\left\{|\Delta \mu_{n}-\Delta \mu_{n}^{*}|^{2}, |\Delta v_{n}-\Delta v_{n}^{*}|\right\}}}{\max {\left\{|\Delta \mu_{n}-\Delta \mu_{n}^{*}|^{2}, |\Delta v_{n}-\Delta v_{n}^{*}|\right\}}} \to 0, \nonumber
\end{eqnarray}
a contradiction. Therefore, we must have 
\begin{eqnarray}
\max {\left\{|\Delta \mu_{n}-\Delta \mu_{n}^{*}|, |\Delta v_{n}-\Delta v_{n}^{*}|^{1/2}\right\}}/M_{n} \not \to 0 \label{eqn:proposition_Gaussian_proof_zero}
\end{eqnarray}
as $n \to \infty$. Now, we further divide the argument under that setting of $M_{n}$ into two small cases
\paragraph{Case 1.2.1:} $(\lambda_{n}^{*}-\lambda_{n})|\Delta \mu_{n}^{*}|^{4} \leq \lambda_{n}|\Delta \mu_{n}-\Delta \mu_{n}^{*}|^{4}$ for all $n$ (by subsequence argument). Since $M_{n}=|\Delta \mu_{n}^{*}|$, we would have $(\lambda_{n}^{*}-\lambda_{n})|\Delta \mu_{n}^{*}|^{i} \leq \lambda_{n}|\Delta \mu_{n}-\Delta \mu_{n}^{*}|^{i}$ for all $n$ and $1 \leq l \leq 4$. From here, we obtain that 
\begin{eqnarray}
\dfrac{(\lambda_{n}^{*}-\lambda_{n})|\Delta \mu_{n}^{*}|^{4}}{\lambda_{n}|\Delta \mu_{n}-\Delta \mu_{n}^{*}|} \leq \dfrac{\lambda_{n}|\Delta \mu_{n}-\Delta \mu_{n}^{*}|^{4}}{\lambda_{n}|\Delta \mu_{n}-\Delta \mu_{n}^{*}|} \to 0, \nonumber \\
\dfrac{(\lambda_{n}^{*}-\lambda_{n})|\Delta v_{n}^{*}|^{2}}{\lambda_{n}|\Delta \mu_{n}-\Delta \mu_{n}^{*}|} \leq \dfrac{(\lambda_{n}^{*}-\lambda_{n})|\Delta \mu_{n}^{*}|^{4}}{\lambda_{n}|\Delta \mu_{n}-\Delta \mu_{n}^{*}|} \to 0. \nonumber
\end{eqnarray}
If $|\Delta \mu_{n}-\Delta \mu_{n}^{*}|/|\Delta v_{n}-\Delta v_{n}^{*}|^{1/2} \not \to 0$, by diving both the numerator and the denominator of $H_{1}$ by $\lambda_{n}|\Delta \mu_{n}-\Delta \mu_{n}^{*}|$ and given that the new denominator goes to 0, the new numerator must converge to 0, i.e. we have
\begin{eqnarray}
(\lambda_{n}^{*}-\lambda_{n})\Delta \mu_{n}^{*}/\left\{\lambda_{n}(\Delta \mu_{n}-\Delta \mu_{n}^{*})\right\} \to -1. \nonumber
\end{eqnarray}
However, since we have $|(\Delta \mu_{n}-\Delta \mu_{n}^{*})/\Delta \mu_{n}^{*} \to 0$, the above result would imply that 
\begin{eqnarray}
(\lambda_{n}^{*}-\lambda_{n})|\Delta \mu_{n}^{*}|^{4}/\left\{\lambda_{n}|\Delta \mu_{n}-\Delta \mu_{n}^{*}|^{4}\right\} \to \infty, \nonumber 
\end{eqnarray} 
which is a contradiction to the assumption of Case 1.2.1.1. As a consequence, we must have $|\Delta \mu_{n}-\Delta \mu_{n}^{*}|/|\Delta v_{n}-\Delta v_{n}^{*}|^{1/2} \to 0$. Now, we also have that 
\begin{eqnarray}
\dfrac{(\lambda_{n}^{*}-\lambda_{n})|\Delta \mu_{n}^{*}|^{4}}{\lambda_{n}|\Delta v_{n}-\Delta v_{n}^{*}|^{i/2}} \lesssim \dfrac{\lambda_{n}|\Delta v_{n}-\Delta v_{n}^{*}|^{2}}{\lambda_{n}|\Delta \mu_{n}-\Delta \mu_{n}^{*}|^{i/2}} \to 0, \nonumber \\
\dfrac{(\lambda_{n}^{*}-\lambda_{n})|\Delta v_{n}^{*}|^{4}}{\lambda_{n}|\Delta v_{n}-\Delta v_{n}^{*}|^{i/2}} \leq \dfrac{(\lambda_{n}^{*}-\lambda_{n})|\Delta \mu_{n}^{*}|^{4}}{\lambda_{n}|\Delta v_{n}-\Delta v_{n}^{*}|^{i/2}} \to 0. \nonumber
\end{eqnarray}
for all $1 \leq i \leq 3$. Without loss of generality, we assume that $\Delta v_{n}-\Delta v_{n}^{*}>0$ for all $n$. We denote $(-\Delta \mu_{n}^{*})=q_{1}^{n}(\Delta v_{n}-\Delta v_{n}^{*})$ and $\Delta v_{n}^{*}=q_{2}^{n}(\Delta v_{n}-\Delta v_{n}^{*})$ for all $n$. From the result of \eqref{eqn:proposition_Gaussian_proof_zero}, we would have $|q_{1}^{n}| \to \infty$. Given the above results, by dividing the numerators and the denominators of $H_{\beta}$ by $\lambda_{n}(\Delta v_{n}-\Delta v_{n}^{*})^{\beta/2}$ for any $1 \leq \beta \leq 3$, we would have the new denominators go to 0. Therefore, all the new numerators of these $H_{\beta}$ also go to 0, i.e. we achieve the following system of limits
\begin{eqnarray}
\dfrac{\lambda_{n}^{*}-\lambda_{n}}{\lambda_{n}}q_{1}^{n} \to 0, \ \dfrac{\lambda_{n}^{*}-\lambda_{n}}{\lambda_{n}}\left\{(q_{1}^{n})^{2}+q_{2}^{n}\right\}+1 \to 0, \ \dfrac{\lambda_{n}^{*}-\lambda_{n}}{\lambda_{n}}\biggr(\dfrac{(q_{1}^{n})^{3}}{6}+\dfrac{q_{1}^{n}q_{2}^{n}}{2}\biggr) \to 0. \nonumber
\end{eqnarray}
Since $|q_{1}^{n}| \to \infty$, the last limit in the above system implies that $(\lambda_{n}^{*}-\lambda_{n})\biggr(\dfrac{(q_{1}^{n})^{2}}{3}+q_{2}^{n}\biggr)/\lambda_{n} \to 0$. Combining this result with the second limit in the above system yields that $(\lambda_{n}^{*}-\lambda_{n})(q_{1}^{n})^{2}/\lambda_{n}+3/2 \to 0$, which cannot happen. Therefore, Case 1.2.1 does not hold.
\paragraph{Case 1.2.2:} $(\lambda_{n}^{*}-\lambda_{n})|\Delta \mu_{n}^{*}|^{4} > \lambda_{n}|\Delta \mu_{n}-\Delta \mu_{n}^{*}|^{4}$ for all $n$ (by subsequence argument). If $(\lambda_{n}^{*}-\lambda_{n})|\Delta \mu_{n}^{*}|^{4} \leq \lambda_{n}|\Delta v_{n}-\Delta v_{n}^{*}|^{2}$ for all $n$, the by using the same argument as that of Case 1.2.1, we quickly achieve the contradiction. Therefore, we must have $(\lambda_{n}^{*}-\lambda_{n})|\Delta \mu_{n}^{*}|^{4} > \lambda_{n}|\Delta v_{n}-\Delta v_{n}^{*}|^{2}$. Denote $(\Delta \mu_{n}-\Delta \mu_{n}^{*})=m_{1}^{n}(-\Delta \mu_{n}^{*})$, $(-\Delta v_{n}^{*})=m_{2}^{n}(\Delta \mu_{n}^{*})^{2}$, and $(\Delta v_{n}-\Delta v_{n}^{*})=m_{3}^{n}(\Delta \mu_{n}^{*})^{2}$. Since $M_{n}=|\Delta \mu_{n}^{*}|$, we would have $|m_{i}^{n}| \leq 1$ for all $1 \leq i \leq 3$. Denote $m_{i}^{n} \to m_{i}$ for all $1 \leq i \leq 3$ (by subsequence argument). The results of \eqref{eqn:proposition_Gaussian_proof_zero} lead to $m_{1}=m_{3}=0$. Now by dividing both the numerator and denominator of $H_{\beta}$ by $(\lambda_{n}^{*}-\lambda_{n})(-\Delta \mu_{n}^{*})^{\beta}$ for any $1 \leq \beta \leq 4$, as the new denominators of $H_{|\beta|}$ do not go to $\infty$, we would also achieve that the new numerators of $H_{|\beta|}$ go to 0, i.e. the following system of limits hold
\begin{eqnarray}
1+\dfrac{\lambda_{n}^{*}-\lambda_{n}}{\lambda_{n}}m_{1}^{n} \to 0, \ \biggr[1+\dfrac{\lambda_{n}^{*}-\lambda_{n}}{\lambda_{n}}(m_{1}^{n})^{2}\biggr]+m_{2}^{n}+\dfrac{\lambda_{n}^{*}-\lambda_{n}}{\lambda_{n}}m_{3}^{n} \to 0, \nonumber \\
\biggr(1+\dfrac{\lambda_{n}^{*}-\lambda_{n}}{\lambda_{n}}(m_{1}^{n})^{3}\biggr)/6+\biggr(m_{2}^{n}+\dfrac{\lambda_{n}^{*}-\lambda_{n}}{\lambda_{n}}m_{1}^{n}m_{3}^{n}\biggr)/2 \to 0, \nonumber \\
\biggr(1+\dfrac{\lambda_{n}^{*}-\lambda_{n}}{\lambda_{n}}(m_{1}^{n})^{4}\biggr)/24+\biggr(m_{2}^{n}+\dfrac{\lambda_{n}^{*}-\lambda_{n}}{\lambda_{n}}(m_{3}^{n})^{2}\biggr)/4+\biggr((m_{2}^{n})^{2}+\dfrac{\lambda_{n}^{*}-\lambda_{n}}{\lambda_{n}}(m_{3}^{n})^{2}\biggr)/8 \to 0. \nonumber
\end{eqnarray}
Combining with $m_{1}^{n} \to 0$, the first and third limit of the above system of limits imply that $m_{2}=-1/3$. From here, the second and fourth limit yields that $1/6+m_{2}+m_{2}^{2}/2=0$, which is a contradiction. Therefore, Case 1.2.2 cannot hold.
\paragraph{Case 2:} $\biggr(\lambda_{n}(|\Delta \mu_{n}|^{2}+|\Delta v_{n}|)+\lambda_{n}^{*}(|\Delta \mu_{n}^{*}|^{2}+|\Delta v_{n}^{*}|)\biggr)/\biggr\{\lambda_{n}(|\mu_{n}-\mu_{n}^{*}|^{2}+|v_{n}-v_{n}^{*}|)\biggr\} \to \infty$. We define
\begin{eqnarray}
\overline{\mathcal{Q}}(G_{n},G_{*,n}) & = & (\lambda_{n}^{*}-\lambda_{n})(|\Delta \mu_{n}|^{2}+|\Delta v_{n}|)(|\Delta \mu_{n}^{*}|^{2}+|\Delta v_{n}^{*}|)+\biggr(\lambda_{n}(|\Delta \mu_{n}|^{2}+|\Delta v_{n}|) \nonumber \\
& + & \lambda_{n}^{*}(|\Delta \mu_{n}^{*}|^{2}+|\Delta v_{n}^{*}|)\biggr)\biggr(|\mu_{n}-\mu_{n}^{*}|^{2}+|v_{n}-v_{n}^{*}|\biggr). \nonumber
\end{eqnarray}
We will demonstrate that $\mathcal{Q}(G_{n},G_{*,n}) \asymp \overline{\mathcal{Q}}(G_{n},G_{*,n})$. In fact, from the above formulation of $\overline{\mathcal{Q}}(G_{n},G_{*,n})$, we would have that
\begin{eqnarray}
\overline{\mathcal{Q}}(G_{n},G_{*,n}) & \leq & 2 (\lambda_{n}^{*}-\lambda_{n})(|\Delta \mu_{n}^{*}|^{2}+|\Delta \mu_{n} - \Delta \mu_{n}^{*}|^{2}+|\Delta v_{n}^{*}|+|\Delta v_{n}-\Delta v_{n}^{*}|)(|\Delta \mu_{n}^{*}|^{2}+|\Delta v_{n}^{*}|) \nonumber \\
& + & 2\biggr(\lambda_{n}(|\Delta \mu_{n}|^{2}+|\Delta v_{n}|) + \lambda_{n}^{*}(|\Delta \mu_{n}^{*}|^{2}+|\Delta v_{n}^{*}|)\biggr)\biggr(|\mu_{n}-\mu_{n}^{*}|^{2}+|v_{n}-v_{n}^{*}|\biggr) \nonumber \\
& \leq & 2 \mathcal{Q}(G_{n},G_{*,n}) \nonumber
\end{eqnarray}
where the first inequality is due to the triangle inequality and basic inequality $(a+b)^{2} \leq 2(a^{2}+b^{2})$ and the second inequality is due to the following result
\begin{eqnarray}
(\lambda_{n}^{*}-\lambda_{n})(|\Delta \mu_{n} - \Delta \mu_{n}^{*}|^{2}+|\Delta v_{n}-\Delta v_{n}^{*}|) \leq \lambda_{n}^{*}\biggr(|\mu_{n}-\mu_{n}^{*}|^{2}+|v_{n}-v_{n}^{*}|\biggr) \nonumber
\end{eqnarray}
On the other hand, we also have that
\begin{eqnarray}
2 \overline{\mathcal{Q}}(G_{n},G_{*,n}) & \geq & (\lambda_{n}^{*}-\lambda_{n})(|\Delta \mu_{n}^{*}|^{2}+|\Delta v_{n}^{*}|)(|\Delta \mu_{n}|^{2}+|\Delta v_{n}|+|\mu_{n}-\mu_{n}^{*}|^{2}+|v_{n}-v_{n}^{*}|) \nonumber \\
& + & \biggr(\lambda_{n}(|\Delta \mu_{n}|^{2}+|\Delta v_{n}|) + \lambda_{n}^{*}(|\Delta \mu_{n}^{*}|^{2}+|\Delta v_{n}^{*}|)\biggr)\biggr(|\mu_{n}-\mu_{n}^{*}|^{2}+|v_{n}-v_{n}^{*}|\biggr) \nonumber \\
& \geq & \mathcal{Q}(G_{n},G_{*,n})/2 \nonumber
\end{eqnarray}
where the last inequality is due to triangle inequality and basic inequality $(a+b)^{2} \leq 2(a^{2}+b^{2})$. Therefore, we conclude that $\mathcal{Q}(G_{n},G_{*,n}) \asymp \overline{\mathcal{Q}}(G_{n},G_{*,n})$. Now, since $H_{\beta} \to 0$ for all $1 \leq \beta \leq 8$, we would have that
\begin{eqnarray}
F_{\beta}=\dfrac{\sum \limits_{\alpha_{1},\alpha_{2}}{\dfrac{(\lambda_{n}^{*}-\lambda_{n})(-\Delta \mu_{n}^{*})^{\alpha_{1}}(-\Delta v_{n}^{*})^{\alpha_{2}}+\lambda_{n}(\Delta \mu_{n}-\Delta \mu_{n}^{*})^{\alpha_{1}}(\Delta v_{n}-\Delta v_{n}^{*})^{\alpha_{2}}}{2^{|\alpha_{2}|}\alpha_{1}!\alpha_{2}!}}}{\overline{\mathcal{Q}}(G_{n},G_{*,n})} \to 0. \nonumber
\end{eqnarray}
Similar to Case 1, under Case 2 we also consider two distincts setting of $\lambda_{n}^{*}/\lambda_{n}$
\paragraph{Case 2.1:} $\lambda_{n}^{*}/\lambda_{n} \not \to \infty$. Under this case, we denote
\begin{eqnarray}
M_{n}' : =\max \left\{|\Delta \mu_{n}|^{2},|\Delta v_{n}|,|\Delta \mu_{n}^{*}|^{2},|\Delta v_{n}^{*}|\right\}. \nonumber
\end{eqnarray}
From the assumption of Case 2, we would have
\begin{eqnarray}
|\Delta \mu_{n}-\Delta \mu_{n}^{*}|^{2}/M_{n}' \to 0, \ |\Delta v_{n}-\Delta v_{n}^{*}|/M_{n}' \to 0. \label{eqn:proposition_Gaussian_proof_second}
\end{eqnarray}
Due to the symmetry between $(|\Delta \mu_{n}|^{2},|\Delta v_{n}|)$ and $(|\Delta \mu_{n}^{*}|^{2},|\Delta v_{n}^{*}|)$, we assume without loss of generality that $M_{n}' \in \max \left\{|\Delta \mu_{n}|^{2},|\Delta v_{n}|\right\}$. Under that assumption, we have two distinct cases
\paragraph{Case 2.1.1:} $M_{n}'=|\Delta \mu_{n}|^{2}$ for all $n$ (by the subsequence argument). From \eqref{eqn:proposition_Gaussian_proof_second}, we have $|\Delta \mu_{n}-\Delta \mu_{n}^{*}|/|\Delta \mu_{n}| \to 0$, i.e., $\Delta \mu_{n}/\Delta \mu_{n}^{*} \to 1$. To be able to utilize the assumptions of Case 2, we will need to study the formulations of $F_{\beta}$ more deeply. In fact, when $\beta=1$ simple calculation yields
\begin{eqnarray}
A_{1} := (\lambda_{n} \Delta \mu_{n} -\lambda_{n}^{*} \Delta \mu_{n}^{*})/\overline{\mathcal{Q}}(G_{n},G_{*,n}) \to 0. \nonumber
\end{eqnarray}
When $\beta=2$, we have
\begin{eqnarray}
F_{2}=\dfrac{(\lambda_{n}^{*}-\lambda_{n})(\Delta \mu_{n}^{*})^{2}+\lambda_{n}(\Delta \mu_{n}-\Delta \mu_{n}^{*})^{2}+(\lambda_{n}^{*}-\lambda_{n})(-\Delta v_{n}^{*})+\lambda_{n}(\Delta v_{n}-\Delta v_{n}^{*})}{\overline{Q}(G_{n},G_{*,n})} \to 0. \nonumber
\end{eqnarray}
Combining with the result of $A_{1}$, it is clear that 
\begin{eqnarray}
\dfrac{(\lambda_{n}^{*}-\lambda_{n})(\Delta \mu_{n}^{*})^{2}+\lambda_{n}(\Delta \mu_{n}-\Delta \mu_{n}^{*})^{2}}{\overline{\mathcal{Q}}(G_{n},G_{*,n})} \to \dfrac{(\lambda_{n}^{*}-\lambda_{n})\Delta \mu_{n}\Delta \mu_{n}^{*}}{\overline{\mathcal{Q}}(G_{n},G_{*,n})}. \nonumber
\end{eqnarray}
Combining the above result with $F_{2} \to 0$, we would have
\begin{eqnarray}
A_{2} := \dfrac{(\lambda_{n}^{*}-\lambda_{n})\Delta \mu_{n}\Delta \mu_{n}^{*}+\lambda_{n}\Delta v_{n}-\lambda_{n}^{*}\Delta v_{n}^{*}}{\overline{\mathcal{Q}}(G_{n},G_{*,n})} \to 0. \nonumber
\end{eqnarray}
Now, we have two small cases
\paragraph{Case 2.1.1.1:} $\Delta v_{n}/(\Delta \mu_{n})^{2} \to 0$ as $n \to \infty$. From \eqref{eqn:proposition_Gaussian_proof_second}, since we have $|\Delta v_{n} - \Delta v_{n}^{*}|/(\Delta \mu_{n})^{2} \to 0$, it implies that $\Delta v_{n}^{*}/(\Delta \mu_{n})^{2} \to 0$. Since $\Delta \mu_{n}/\Delta \mu_{n}^{*} \to 1$, we also have that $\Delta v_{n}^{*}/(\Delta \mu_{n}^{*})^{2} \to 0$. Now, from the formulations of $\overline{\mathcal{Q}}(G_{n},G_{*,n})$ we have
\begin{eqnarray}
(\lambda_{n}^{*}-\lambda_{n})|\Delta v_{n}\Delta v_{n}^{*}|/\overline{\mathcal{Q}}(G_{n},G_{*,n})\leq |\Delta v_{n}|/|\Delta \mu_{n}|^{2} \to 0, \nonumber \\
(\lambda_{n}^{*}-\lambda_{n})|\Delta v_{n}(\Delta \mu_{n}^{*})^{2}|/\overline{\mathcal{Q}}(G_{n},G_{*,n}) \leq |\Delta v_{n}|/|\Delta \mu_{n}|^{2} \to 0, \nonumber \\
(\lambda_{n}^{*}-\lambda_{n})|\Delta v_{n}^{*}(\Delta \mu_{n})^{2}|/\overline{\mathcal{Q}}(G_{n},G_{*,n}) \leq |\Delta v_{n}^{*}|/|\Delta \mu_{n}^{*}|^{2} \to 0. \label{eqn:proposition_Gaussian_proof_third}
\end{eqnarray}
From the result that $A_{2} \to 0$, by multiplying $A_{2}$ with $\Delta \mu_{n}\Delta \mu_{n}^{*}$, we would also have that
\begin{eqnarray}
\dfrac{(\lambda_{n}^{*}-\lambda_{n})(\Delta \mu_{n}\Delta \mu_{n}^{*})^{2}+(\lambda_{n}\Delta v_{n}-\lambda_{n}^{*}\Delta v_{n}^{*})\Delta \mu_{n}\Delta \mu_{n}^{*}}{\overline{Q}(G_{n},G_{*,n})} \to 0.  \label{eqn:proposition_Gaussian_proof_third_first}
\end{eqnarray}
As $\lambda_{n}^{*}/\lambda_{n} \not \to \infty$, we have two distinct settings of $\lambda_{n}^{*}/\lambda_{n}$ 
\paragraph{Case 2.1.1.1.1:} $\lambda_{n}^{*}/\lambda_{n} \not \to 1$. Using the result from \eqref{eqn:proposition_Gaussian_proof_third} and the fact that $\Delta \mu_{n}/\Delta \mu_{n}^{*} \to 1$, we would obtain that
\begin{eqnarray}
(\lambda_{n}\Delta v_{n}-\lambda_{n}^{*}\Delta v_{n}^{*})\Delta \mu_{n}\Delta \mu_{n}^{*}/\overline{\mathcal{\mathcal{Q}}}(G_{n},G_{*,n}) \to 0. \nonumber
\end{eqnarray}
Combining the above result with \eqref{eqn:proposition_Gaussian_proof_third_first}, it leads to  
\begin{eqnarray}
(\lambda_{n}^{*}-\lambda_{n})(\Delta \mu_{n}\Delta \mu_{n}^{*})^{2}/\overline{\mathcal{Q}}(G_{n},G_{*,n}) \to 0. \label{eqn:proposition_Gaussian_proof_fourth}
\end{eqnarray}
Combining \eqref{eqn:proposition_Gaussian_proof_third} and \eqref{eqn:proposition_Gaussian_proof_fourth}, we would achieve that
\begin{eqnarray}
\dfrac{(\lambda_{n}^{*}-\lambda_{n})(|\Delta \mu_{n}|^{2}+|\Delta v_{n}|)(|\Delta \mu_{n}^{*}|^{2}+|\Delta v_{n}^{*}|)}{\overline{\mathcal{Q}}(G_{n},G_{*,n})} \to 0. \nonumber
\end{eqnarray}
From the formulation of $\overline{\mathcal{Q}}(G_{n},G_{*,n})$, the above limit implies that
\begin{eqnarray}
E := \dfrac{\biggr(\lambda_{n}(|\Delta \mu_{n}|^{2}+|\Delta v_{n}|)+\lambda_{n}^{*}|(\Delta \mu_{n}^{*}|^{2}+|\Delta v_{n}^{*}|)\biggr)\biggr(|\mu_{n}-\mu_{n}^{*}|^{2}+|v_{n}-v_{n}^{*}|\biggr)}{\overline{\mathcal{Q}}(G_{n},G_{*,n})} \to 1. \nonumber
\end{eqnarray}
Due to the previous assumptions, we obtain that
\begin{eqnarray}
E \lesssim \dfrac{\max\left\{\lambda_{n}(\Delta \mu_{n})^{2}(\Delta \mu_{n}-\Delta \mu_{n}^{*})^{2}, \lambda_{n}(\Delta \mu_{n})^{2}(\Delta v_{n}-\Delta v_{n}^{*})\right\}}{\overline{\mathcal{Q}}(G_{n},G_{*,n})}. \nonumber
\end{eqnarray}
By combining the results from \eqref{eqn:proposition_Gaussian_proof_third} and \eqref{eqn:proposition_Gaussian_proof_fourth}, we can verify that
\begin{eqnarray}
& & \dfrac{\lambda_{n}(\Delta \mu_{n})^{2}(\Delta \mu_{n}-\Delta \mu_{n}^{*})^{2}}{\overline{\mathcal{Q}}(G_{n},G_{*,n})} \to \dfrac{(\lambda_{n}^{*}-\lambda_{n})\biggr(-(\Delta \mu_{n})^{2}(\Delta \mu_{n}^{*})^{2}+(\Delta \mu_{n})^{3}\Delta \mu_{n}^{*}\biggr)}{\overline{\mathcal{Q}}(G_{n},G_{*,n})} \to 0, \nonumber \\
& & \dfrac{\lambda_{n}(\Delta \mu_{n})^{2}(\Delta v_{n}-\Delta v_{n}^{*})}{\overline{\mathcal{Q}}(G_{n},G_{*,n})} \to \dfrac{(\lambda_{n}^{*}-\lambda_{n})(\Delta \mu_{n})^{2}\Delta v_{n}^{*}}{\overline{\mathcal{Q}}(G_{n},G_{*,n})} \to 0. \nonumber
\label{eqn:proposition_Gaussian_proof_fifth}
\end{eqnarray} 
Therefore, we achieve $E \to 0$, which is a contradiction. As a consequence, Case 2.1.1.1.1 cannot happen.
\paragraph{Case 2.1.1.1.2:} $\lambda_{n}^{*}/\lambda_{n} \to 1$. Under this case, if we have
\begin{eqnarray}
\max \biggr\{\dfrac{\lambda_{n}|\Delta \mu_{n} - \Delta \mu_{n}^{*}|^{2}}{(\lambda_{n}^{*}-\lambda_{n})|\Delta \mu_{n}|^{2}}, \dfrac{\lambda_{n}|\Delta v_{n} - \Delta v_{n}^{*}|}{(\lambda_{n}^{*}-\lambda_{n})|\Delta \mu_{n}|^{2}}\biggr\} \to \infty, \nonumber
\end{eqnarray}
then we will achieve that
\begin{eqnarray}
(\lambda_{n}^{*}-\lambda_{n})(\Delta \mu_{n}\Delta \mu_{n}^{*})^{2}/\overline{\mathcal{Q}}(G_{n},G_{*,n}) \leq \min \biggr\{\dfrac{(\lambda_{n}^{*}-\lambda_{n})|\Delta \mu_{n}^{*}|^{2}}{\lambda_{n}|\Delta \mu_{n} - \Delta \mu_{n}^{*}|^{2}}, \dfrac{(\lambda_{n}^{*}-\lambda_{n})|\Delta \mu_{n}^{*}|^{2}}{\lambda_{n}|\Delta v_{n} - \Delta v_{n}^{*}|}\biggr\} \to 0. \nonumber
\end{eqnarray}
From here, by using the same argument as that of Case 2.1.1.1.1, we will obtain $E \to 0$, which is a contradiction. Therefore, we would have that
\begin{eqnarray}
\max \biggr\{\dfrac{\lambda_{n}|\Delta \mu_{n} - \Delta \mu_{n}^{*}|^{2}}{(\lambda_{n}^{*}-\lambda_{n})|\Delta \mu_{n}|^{2}}, \dfrac{\lambda_{n}|\Delta v_{n} - \Delta v_{n}^{*}|}{(\lambda_{n}^{*}-\lambda_{n})|\Delta \mu_{n}|^{2}}\biggr\} \not \to \infty. \label{eqn:proposition_Gaussian_proof_fifth_first}
\end{eqnarray}
With that assumption, it leads to $\overline{\mathcal{Q}}(G_{n},G_{*,n}) \asymp (\lambda_{n}^{*}-\lambda_{n})(\Delta \mu_{n}^{*})^{2}(\Delta \mu_{n})^{2} \asymp (\lambda_{n}^{*}-\lambda_{n})(\Delta \mu_{n}^{*})^{4}$ as $\Delta \mu_{n}^{*}/\Delta \mu_{n} \to 1$. Now, we denote $\Delta \mu_{n}-\Delta \mu_{n}^{*}=\tau_{1}^{n}\Delta \mu_{n}^{*}$ and $\Delta v_{n} - \Delta v_{n}^{*}= \tau_{2}^{n}(\Delta \mu_{n}^{*})^{2}$. From the assumption of Case 2.1.1.1, we would have that $\tau_{1}^{n} \to 0$ and $\tau_{2}^{n} \to 0$. By dividing both the numerator and the denominator of $F_{3}$ by $(\lambda_{n}^{*}-\lambda_{n})(\Delta \mu_{n}^{*})^{3}$, as the new denominators of $F_{3}$ goes to 0, we also obtain the numerator of this term goes to 0, i.e., the following holds
\begin{eqnarray}
\left\{-1+\dfrac{\lambda_{n}}{\lambda_{n}^{*}-\lambda_{n}}(\tau_{1}^{n})^{3}\right\}/6+\dfrac{\lambda_{n}}{2(\lambda_{n}^{*}-\lambda_{n})}\tau_{1}^{n}\tau_{2}^{n} \to 0. \nonumber
\end{eqnarray}
From \eqref{eqn:proposition_Gaussian_proof_fifth_first}, we have that $\lambda_{n}(\tau_{1}^{n})^{2}/(\lambda_{n}^{*}-\lambda_{n}) \not \to \infty$ and $\lambda_{n} \tau_{2}^{n}/(\lambda_{n}^{*}-\lambda_{n}) \not \to \infty$. Therefore, since $\tau_{1}^{n} \to 0$ and $\tau_{2}^{n} \to 0$, we would achieve that $\lambda_{n}(\tau_{1}^{n})^{3}/(\lambda_{n}^{*}-\lambda_{n}) \to 0$ and $\lambda_{n} \tau_{1}^{n}\tau_{2}^{n}/(\lambda_{n}^{*}-\lambda_{n}) \to 0$. By plugging these results to the above limit, it implies that $-1/6=0$, which is a contradiction. As a consequence, Case 2.1.1.1.2 cannot hold. 
\paragraph{Case 2.1.1.2:} $\Delta v_{n}/(\Delta \mu_{n})^{2} \not \to 0$ as $n \to \infty$. Under that case, we will only consider the setting that $\lambda_{n}^{*}/\lambda_{n} \to 1$ as the argument for other settings of that ratio can be argued in the similar fashion. Since we have $|\Delta v_{n} - \Delta v_{n}^{*}|/(\Delta \mu_{n})^{2} \to 0$, it leads to $\Delta v_{n}^{*}/(\Delta \mu_{n})^{2} \not \to 0$. Combining with $\Delta \mu_{n}/\Delta \mu_{n}^{*} \to 1$, it implies that as $n$ is large enough we would have 
\begin{eqnarray}
\max \left\{(\Delta \mu_{n})^{2}, (\Delta \mu_{n}^{*})^{2}\right\}\lesssim \min \left\{|\Delta v_{n}|,|\Delta v_{n}^{*}|\right\}. \label{eqn:proposition_Gaussian_proof_sixth}
\end{eqnarray}
According the formulation of $\overline{\mathcal{Q}}(G_{n},G_{*,n})$, we achieve
\begin{eqnarray}
\dfrac{(\lambda_{n}^{*}-\lambda_{n})|\Delta v_{n}\Delta v_{n}^{*}|}{\overline{\mathcal{Q}}(G_{n},G_{*,n})} \leq \min \biggr\{\dfrac{(\lambda_{n}^{*}-\lambda_{n})|\Delta v_{n}^{*}|}{\lambda_{n}|\Delta v_{n} - \Delta v_{n}^{*}|}, \dfrac{(\lambda_{n}^{*}-\lambda_{n})|\Delta v_{n}|}{\lambda_{n}^{*}|\Delta v_{n} - \Delta v_{n}^{*}|}, \dfrac{(\lambda_{n}^{*}-\lambda_{n})|\Delta v_{n}^{*}|}{\lambda_{n}|\Delta \mu_{n} - \Delta \mu_{n}^{*}|^{2}}, \nonumber \\
\dfrac{(\lambda_{n}^{*}-\lambda_{n})|\Delta v_{n}|}{\lambda_{n}^{*}|\Delta \mu_{n} - \Delta \mu_{n}^{*}|^{2}}\biggr\} =  B. \nonumber
\end{eqnarray}
If we have $B \to 0$, we would get $\dfrac{(\lambda_{n}^{*}-\lambda_{n})|\Delta v_{n}\Delta v_{n}^{*}|}{\overline{\mathcal{Q}}(G_{n},G_{*,n})} \to 0$. Combining with \eqref{eqn:proposition_Gaussian_proof_sixth}, we can check that all the results in \eqref{eqn:proposition_Gaussian_proof_third}, \eqref{eqn:proposition_Gaussian_proof_fourth}, and \eqref{eqn:proposition_Gaussian_proof_fifth} hold. With similar argument as Case 2.1.1.1, we achieve $\overline{\mathcal{Q}}(G_{n},G_{*,n})/\overline{\mathcal{Q}}(G_{n},G_{*,n}) \to 0$, a contradiction. Therefore, we must have $B \not \to 0$. It implies that as $n$ is large enough we must have
\begin{eqnarray}
\max \left\{\lambda_{n},\lambda_{n}^{*}\right\}\max \left\{|\Delta \mu_{n} - \Delta \mu_{n}^{*}|^{2}, |\Delta v_{n}-\Delta v_{n}^{*}|\right\} \lesssim (\lambda_{n}^{*}-\lambda_{n})\min \left\{|\Delta v_{n}|, |\Delta v_{n}^{*}|\right\}. \nonumber
\end{eqnarray}
Furthermore, as $(\lambda_{n}^{*}-\lambda_{n})/\lambda_{n} \to 0$, we obtain $|\Delta v_{n}^{*}|/|\Delta v_{n} - \Delta v_{n}^{*}| \to \infty$ and $|\Delta v_{n}^{*}|/|\Delta \mu_{n}-\Delta \mu_{n}^{*}|^{2} \to \infty$, i.e., $\Delta v_{n}/\Delta v_{n}^{*} \to 1$. With all of these results, we can check that $\overline{\mathcal{Q}}(G_{n},G_{*,n}) \lesssim (\lambda_{n}^{*}-\lambda_{n})|\Delta v_{n}^{*}|^{2}$. Denote $(\Delta v_{n}-\Delta v_{n}^{*})=k_{1}^{n}|\Delta v_{n}^{*}|$, $(\Delta \mu_{n} - \Delta \mu_{n}^{*})=k_{2}^{n}|\Delta v_{n}^{*}|^{1/2}$, and $\Delta \mu_{n}^{*}=k_{3}^{n}|\Delta v_{n}^{*}|^{1/2}$ for all $n$. From all the assumptions we have thus far, we get $k_{1}^{n} \to 0$, $k_{2}^{n} \to 0$, and $|k_{3}^{n}| \not \to \infty$. Additionally, as $B \not \to 0$, we further have $\lambda_{n}|k_{1}^{n}|/(\lambda_{n}^{*}-\lambda_{n}) \not \to \infty$ and $\lambda_{n}(k_{2}^{n})^{2}/(\lambda_{n}^{*}-\lambda_{n}) \not \to \infty$. By dividing both the numerator and the denominator of $F_{3}$ and $F_{4}$ respectively by $(\lambda_{n}^{*}-\lambda_{n})|\Delta v_{n}^{*}|^{3/2}$ and $(\lambda_{n}^{*}-\lambda_{n})|\Delta v_{n}^{*}|^{2}$, as the new denominators of $F_{3}, F_{4}$ do not go to infinity, we obtain the new numerators of these terms go to 0, i.e., the following holds
\begin{eqnarray}
\biggr\{-(k_{3}^{n})^{3}+\dfrac{\lambda_{n}}{\lambda_{n}^{*}-\lambda_{n}}(k_{2}^{n})^{3}\biggr\}/6+\biggr\{k_{3}^{n}+\dfrac{\lambda_{n}}{\lambda_{n}^{*}-\lambda_{n}}k_{1}^{n}k_{2}^{n}\biggr\}/2 \to 0, \nonumber \\
\biggr\{(k_{3}^{n})^{4}+\dfrac{\lambda_{n}}{\lambda_{n}^{*}-\lambda_{n}}(k_{2}^{n})^{4}\biggr\}/24+\biggr\{-(k_{3}^{n})^{2}+\dfrac{\lambda_{n}}{\lambda_{n}^{*}-\lambda_{n}}k_{1}^{n}(k_{2}^{n})^{2}\biggr\}/4+\biggr\{1+\dfrac{\lambda_{n}}{\lambda_{n}^{*}-\lambda_{n}}(k_{1}^{n})^{2}\biggr\}/8 \to 0. \nonumber
\end{eqnarray}
With the assumptions with $k_{1}^{n},k_{2}^{n}$, and $k_{3}^{n}$, we would have
\begin{eqnarray}
\dfrac{\lambda_{n}}{\lambda_{n}^{*}-\lambda_{n}}(k_{2}^{n})^{i} \to 0, \ \dfrac{\lambda_{n}}{\lambda_{n}^{*}-\lambda_{n}}k_{1}^{n}(k_{2}^{n})^{j} \to 0, \dfrac{\lambda_{n}}{\lambda_{n}^{*}-\lambda_{n}}(k_{1}^{n})^{2} \to 0 \nonumber
\end{eqnarray}
for any $3 \leq i \leq 4$ and $1 \leq j \leq 2$. If we denote $k_{3}^{n} \to k_{3}$, by combining all the above results we achieve the following system of equations
\begin{eqnarray}
-k_{3}^{3}/6+k_{3}/2=0, \ k_{3}^{4}/24-k_{3}^{2}/4+1/8=0, \nonumber
\end{eqnarray}
which does not admit a solution, a contradiction. Hence, Case 2.1.1.2 cannot hold.
\paragraph{Case 2.1.2:} $M_{n}'=|\Delta v_{n}|$ for all $n$ (by the subsequence argument). From \eqref{eqn:proposition_Gaussian_proof_second}, we would have $|\Delta v_{n} - \Delta v_{n}^{*}|/|\Delta v_{n}| \to 0$, i.e., $\Delta v_{n}/\Delta v_{n}^{*} \to 1$, and $|\Delta \mu_{n} - \Delta \mu_{n}^{*}|^{2}/|\Delta v_{n}| \to 0$. The argument under this case is rather similar to that of Case 2.1; therefore, we only sketch the key steps. By using the result that $A_{1} \to 0$ and $A_{2} \to 0$, we would obtain that
\begin{eqnarray}
\dfrac{(\lambda_{n}^{*}-\lambda_{n})(\Delta \mu_{n}^{*})^{4}+\lambda_{n}(\Delta \mu_{n}-\Delta \mu_{n}^{*})^{4}}{24\overline{\mathcal{Q}}(G_{n},G_{*,n})} \to \dfrac{(\lambda_{n}^{*}-\lambda_{n})\Delta \mu_{n}\Delta \mu_{n}^{*}\biggr[(\Delta \mu_{n})^{2}-3\Delta \mu_{n}\Delta \mu_{n}^{*}+3(\Delta \mu_{n}^{*})^{2}\biggr]}{24\overline{\mathcal{Q}}(G_{n},G_{*,n})}, \nonumber \\
\dfrac{(\lambda_{n}^{*}-\lambda_{n})(\Delta v_{n}^{*})^{2}+\lambda_{n}(\Delta v_{n}-\Delta v_{n}^{*})^{2}}{8\overline{\mathcal{Q}}(G_{n},G_{*,n})} \to \dfrac{(\lambda_{n}^{*}-\lambda_{n})\Delta \mu_{n}^{*}\biggr[\Delta \mu_{n}\Delta v_{n}-\Delta \mu_{n}^{*}\Delta v_{n} - \Delta u_{n} \Delta v_{n}^{*}\biggr]}{8\overline{\mathcal{Q}}(G_{n},G_{*,n})}, \nonumber \\
\dfrac{\lambda_{n}(\Delta \mu_{n}-\Delta \mu_{n}^{*})^{2}(\Delta v_{n}-\Delta v_{n}^{*})}{4\overline{\mathcal{Q}}(G_{n},G_{*,n})} \to \dfrac{(\lambda_{n}^{*}-\lambda_{n})\biggr[\Delta \mu_{n}\Delta \mu_{n}^{*}\Delta v_{n}^{*} - \Delta \mu_{n} \Delta \mu_{n}^{*}\Delta v_{n}+ \Delta v_{n} \Delta v_{n}^{*}\biggr]}{4\overline{\mathcal{Q}}(G_{n},G_{*,n})}. \nonumber
\end{eqnarray}
As $F_{4} \to 0$, we equivalently have
\begin{eqnarray}
A_{4} & : = & (\lambda_{n}^{*}-\lambda_{n})\biggr\{\dfrac{\Delta \mu_{n}\Delta \mu_{n}^{*}\biggr[(\Delta \mu_{n})^{2}-3\Delta \mu_{n}\Delta \mu_{n}^{*}+3(\Delta \mu_{n}^{*})^{2}\biggr]}{24\overline{\mathcal{Q}}(G_{n},G_{*,n})} \nonumber \\
& + & \dfrac{\Delta \mu_{n}\Delta \mu_{n}^{*}\Delta v_{n}-2(\Delta \mu_{n}^{*})^{2}\Delta v_{n} - \Delta \mu_{n} \Delta \mu_{n}^{*}\Delta v_{n}^{*}+\Delta v_{n} \Delta v_{n}^{*}}{8\overline{\mathcal{Q}}(G_{n},G_{*,n})}\biggr\} \to 0.\nonumber
\end{eqnarray}
Under Case 2.1.2, we only consider the setting when $(\Delta \mu_{n})^{2}/\Delta v_{n} \to 0$ as other settings of this term can be argued in the similar fashion as that of Case 2.1.2. Since $|\Delta \mu_{n} - \Delta \mu_{n}^{*}|^{2}/|\Delta v_{n}| \to 0$, we have $(\Delta \mu_{n}^{*})/\Delta v_{n} \to 0$. As $\Delta v_{n}/\Delta v_{n}^{*} \to 1$, we also further have that $(\Delta \mu_{n}^{*})^{2}/\Delta v_{n}^{*} \to 0$ and $(\Delta \mu_{n})^{2}/\Delta v_{n} \to 0$. Therefore, we have $\Delta \mu_{n}\Delta \mu_{n}^{*}/\Delta v_{n} \to 0$ and $\Delta \mu_{n}\Delta \mu_{n}^{*}/\Delta v_{n}^{*} \to 0$. Now, from the formulation of $\overline{Q}(G_{n},G_{*,n})$, we achieve 
\begin{eqnarray}
(\lambda_{n}^{*}-\lambda_{n})|\Delta \mu_{n}\Delta \mu_{n}^{*}|^{2}/\overline{\mathcal{Q}}(G_{n},G_{*,n})\leq |\Delta \mu_{n}|^{2}/|\Delta v_{n}^{*}|^{2} \to 0, \nonumber \\
(\lambda_{n}^{*}-\lambda_{n})|\Delta v_{n}(\Delta \mu_{n}^{*})^{2}|/\overline{\mathcal{Q}}(G_{n},G_{*,n}) \leq |\Delta \mu_{n}^{*}|^{2}/|\Delta v_{n}^{*}| \to 0, \nonumber \\
(\lambda_{n}^{*}-\lambda_{n})|\Delta v_{n}^{*}(\Delta \mu_{n})^{2}|/\overline{\mathcal{Q}}(G_{n},G_{*,n}) \leq |\Delta \mu_{n}|^{2}/|\Delta v_{n}| \to 0, \nonumber \\
(\lambda_{n}^{*}-\lambda_{n})|\Delta \mu_{n}|^{3}|\Delta \mu_{n}^{*}|/\overline{\mathcal{Q}}(G_{n},G_{*,n}) \leq |\Delta \mu_{n}||\Delta \mu_{n}^{*}|/|\Delta v_{n}^{*}| \to 0, \nonumber \\
(\lambda_{n}^{*}-\lambda_{n})|\Delta \mu_{n}||\Delta \mu_{n}^{*}|^{3}/\overline{\mathcal{Q}}(G_{n},G_{*,n}) \leq |\Delta \mu_{n}||\Delta \mu_{n}^{*}|/|\Delta v_{n}| \to 0. \nonumber
\end{eqnarray}
Combining these results with $A_{4} \to 0$, we achieve $(\lambda_{n}^{*}-\lambda_{n})\Delta v_{n} \Delta v_{n}^{*}/\overline{\mathcal{Q}}(G_{n},G_{*,n}) \to 0$. From here, we can easily verify that all the results in \eqref{eqn:proposition_Gaussian_proof_fifth} hold. Thus, by using the same argument as that of Case 2.1.1, we would get $\overline{\mathcal{Q}}(G_{n},G_{*,n})/\overline{\mathcal{Q}}(G_{n},G_{*,n}) \to 0$, a contradiction. As a consequence, Case 2.1.2 cannot happen.
\paragraph{Case 2.2:} $\lambda_{n}^{*}/\lambda_{n} \to \infty$. Remind that $M_{n}' =\max \left\{|\Delta \mu_{n}|^{2},|\Delta v_{n}|,|\Delta \mu_{n}^{*}|^{2},|\Delta v_{n}^{*}|\right\}$. We can verify that $\overline{\mathcal{Q}}(G_{n},G_{*,n}) \lesssim \lambda_{n}^{*}(M_{n}')^{4}$. By dividing both the numerator and the denominator of $A_{1}$ and $A_{2}$ respectively by $\lambda_{n}^{*}(M_{n}')^{1/2}$ and $\lambda_{n}^{*}M_{n}'$, given that the new denominators go to 0 we would obtain the new numerators also go to 0, i.e., we have the following results
\begin{eqnarray}
\lambda_{n}\Delta \mu_{n}^{n}/\left\{\lambda_{n}^{*}(M_{n}')^{1/2}\right\}-\Delta \mu_{n}^{*}/(M_{n}')^{1/2} \to 0, \nonumber \\
\biggr[(\lambda_{n}^{*}-\lambda_{n})\Delta \mu_{n}\Delta \mu_{n}^{*}+\lambda_{n}\Delta v_{n} - \lambda_{n}^{*}\Delta v_{n}^{*} \biggr]/\left\{\lambda_{n}^{*}M_{n}'\right\} \to 0. \nonumber
\end{eqnarray}
Since $\lambda_{n}/\lambda_{n}^{*} \to 0$, the first limit implies that $\Delta \mu_{n}^{*}/M_{n}' \to 0$. Combining this result with the second limit, we obtain $\Delta v_{n}^{*}/M_{n}' \to 0$. Therefore, we would have $M_{n}' = \max \left\{|\Delta \mu_{n}|^{2},|\Delta v_{n}|\right\}$. Without loss of generality, we assume that $M_{n}'=|\Delta \mu_{n}|^{2}$ as the argument for other possibility of $M_{n}'$ can be argued in the similar fashion. With these assumptions, $|\Delta v_{n} - \Delta v_{n}^{*}|/|\Delta \mu_{n}|^{2} \not \to \infty$, i.e., as $n$ is large enough we get $|\Delta v_{n} - \Delta v_{n}^{*}|  \lesssim |\Delta \mu_{n}|^{2}$. Now, we have two distinct cases
\paragraph{Case 2.2.1:} $\lambda_{n}^{*}\max \left\{|\Delta \mu_{n}^{*}|^{2},|\Delta v_{n}^{*}|\right\}/(\lambda_{n}|\Delta \mu_{n}|^{2}) \to \infty$. Due to this assumption, we can check that as $n$ is large enough, $\overline{\mathcal{Q}}(G_{n},G_{*,n}) \asymp \lambda_{n}^{*}|\Delta \mu_{n}|^{2}\max \left\{|\Delta \mu_{n}^{*}|^{2},|\Delta v_{n}^{*}|\right\}$. If $\max \left\{|\Delta \mu_{n}^{*}|^{2},|\Delta v_{n}^{*}|\right\} = |\Delta \mu_{n}^{*}|^{2}$ for all $n$, then by dividing both the numerator and denominator of $A_{1}$ by $\lambda_{n}^{*}\Delta \mu_{n}^{*}$, given that the new denominator of $A_{1}$ goes to 0, its new numerator must go to 0, i.e., we have
\begin{eqnarray}
\lambda_{n}\Delta \mu_{n}/(\lambda_{n}^{*}\Delta \mu_{n}^{*}) \to 1, \nonumber
\end{eqnarray}
which cannot hold since $\lambda |\Delta \mu_{n}|^{2}/(\lambda_{n}^{*}|\Delta \mu_{n}^{*}|^{2}) \to 0$ (assumption of Case 2.2.1) and $|\Delta \mu_{n}|/|\Delta \mu_{n}^{*}| \to \infty$. Therefore, we must have $\max \left\{|\Delta \mu_{n}^{*}|^{2},|\Delta v_{n}^{*}|\right\} = |\Delta v_{n}^{*}|$ for all $n$. By dividing both the numerator and denominator of $A_{2}$ by $\lambda_{n}^{*} \Delta v_{n}^{*}$, as the new denominator of $A_{2}$ goes to 0, we would have
\begin{eqnarray}
\dfrac{(\lambda_{n}^{*}-\lambda_{n})\Delta \mu_{n} \Delta \mu_{n}^{*}}{\lambda_{n}^{*}\Delta v_{n}^{*}}+\dfrac{\lambda_{n}\Delta v_{n}}{\lambda_{n}^{*}\Delta v_{n}^{*}}-1 \to 0. \nonumber
\end{eqnarray}
Since $\dfrac{\lambda_{n}|\Delta v_{n}|}{\lambda_{n}^{*}|\Delta v_{n}^{*}|} \leq \dfrac{\lambda_{n}|\Delta \mu_{n}|^{2}}{\lambda_{n}^{*}|\Delta v_{n}^{*}|} \to 0$ and $(\lambda_{n}^{*}-\lambda_{n})/\lambda_{n}^{*} \to 1$, the above limit shows that $\Delta \mu_{n} \Delta \mu_{n}^{*}/\Delta v_{n}^{*} \to 1$. Since $(\Delta \mu_{n})^{2}/|\Delta v_{n}^{*}| \to \infty$, it implies that $(\Delta \mu_{n}^{*})^{2}/\Delta v_{n}^{*} \to 0$. Now, by combing the result that $A_{1} \to 0$ and $A_{2} \to 0$, since $F_{3} \to 0$, we can verify that it is equivalent to
\begin{eqnarray}
A_{3} := \dfrac{\biggr[(\lambda_{n}^{*}-\lambda_{n})\Delta \mu_{n}\Delta \mu_{n}^{*}(\Delta \mu_{n} - 2\Delta \mu_{n}^{*})\biggr]/3+(\lambda_{n}^{*}-\lambda_{n})\Delta \mu_{n}^{*}\Delta v_{n}}{\overline{\mathcal{Q}}(G_{n},G_{*,n})} \to 0. \nonumber
\end{eqnarray}
By dividing both the numerator and the denominator of $A_{3}$ by $\lambda_{n}^{*}\Delta \mu_{n}\Delta v_{n}^{*}$, we obtain
\begin{eqnarray}
\dfrac{\biggr[(\lambda_{n}^{*}-\lambda_{n})\Delta \mu_{n}\Delta \mu_{n}^{*}(\Delta \mu_{n} - 2\Delta \mu_{n}^{*})\biggr]/3+(\lambda_{n}^{*}-\lambda_{n})\Delta \mu_{n}^{*}\Delta v_{n}}{\lambda_{n}^{*}\Delta \mu_{n}\Delta v_{n}^{*}} \to 0. \nonumber
\end{eqnarray}
As $(\Delta \mu_{n}^{*})^{2}/\Delta v_{n}^{*} \to 0$ and $\Delta \mu_{n} \Delta \mu_{n}^{*}/\Delta v_{n}^{*} \to 1$, the above limit leads to $\Delta \mu_{n}^{*}\Delta v_{n}/(\Delta \mu_{n}\Delta v_{n}^{*}) \to -1/3$. Now, by studying $A_{4} \to 0$ with the assumption that $\overline{\mathcal{Q}}(G_{n},G_{*,n}) \asymp \lambda_{n}^{*}|\Delta \mu_{n}|^{2}|\Delta v_{n}^{*}|$, we eventually get the equation $1/24-1/12=0$, which is a contradiction. Therefore, Case 2.2.1 cannot hold.
\paragraph{Case 2.2.2:} $\lambda_{n}^{*}\max \left\{|\Delta \mu_{n}^{*}|^{2},|\Delta v_{n}^{*}|\right\}/\lambda_{n}|\Delta \mu_{n}|^{2} \not \to \infty$. Therefore, as $n$ is large enough, we would have $\lambda_{n}^{*}\max \left\{|\Delta \mu_{n}^{*}|^{2},|\Delta v_{n}^{*}|\right\} \lesssim (\lambda_{n}|\Delta \mu_{n}|^{2})$. Hence, we achieve under this case that $\overline{\mathcal{Q}}(G_{n},G_{*,n}) \asymp \lambda_{n}|\Delta \mu_{n}|^{4}$. Denote $\Delta \mu_{n}^{*}=l_{1}^{n}\Delta \mu_{n}$, $\Delta v_{n}=l_{2}^{n}(\Delta \mu_{n})^{2}$, and $\Delta v_{n}^{*}=l_{3}^{n}(\Delta \mu_{n})^{2}$. From the assumptions of Case 2.2.2, we would have $l_{1}^{n} \to 0$ and $l_{3}^{n} \to 0$ while $l_{2}^{n} \not \to \infty$. Additionally, $\lambda_{n}^{*}\max \left\{(l_{1}^{n})^{2},|l_{3}^{n}|\right\}/\lambda_{n} \not \to 0$. By dividing the numerators and denominators of  $A_{i}$ by $\lambda_{n}(\Delta \mu_{n})^{i}$ for $1 \leq i \leq 3$, we achieve the following system of limits
\begin{eqnarray}
\dfrac{\lambda_{n}^{*}l_{1}^{n}}{\lambda_{n}} - 1 \to 0, \ \dfrac{(\lambda_{n}^{*}-\lambda_{n})l_{1}^{n}}{\lambda_{n}} + l_{2}^{n} - \dfrac{\lambda_{n}^{*} l_{3}^{n}}{\lambda_{n}} \to 0, \ \dfrac{\lambda_{n}^{*}-\lambda_{n}}{\lambda_{n}}\left\{\dfrac{l_{1}^{n} - (l_{1}^{n})^{2}}{3}+l_{1}^{n}l_{2}^{n}\right\} \to 0. \label{eqn:proposition_Gaussian_proof_seventh}
\end{eqnarray}
As $l_{1}^{n} \to 0$, the first limit in the above system implies that $\lambda_{n}^{*}(l_{1}^{n})^{2}/\lambda_{n} \to 0$. If we have $\max \left\{(l_{1}^{n})^{2},|l_{2}^{n}|\right\} = |l_{1}^{n}|^{2}$ for all $n$, the previous result would mean that $\lambda_{n}^{*}l_{3}^{n}/\lambda_{n} \to 0$. Therefore, the second limit in \eqref{eqn:proposition_Gaussian_proof_seventh} demonstrates that $l_{2}^{n} \to -1$. However, plugging these results to the third limit in this system would yield $1/3 -1= 0$, which is a contradiction. Hence, we must have $\max \left\{(l_{1}^{n})^{2},|l_{2}^{n}|\right\} = |l_{3}^{n}|$ for all $n$. Under this setting, by denoting $\dfrac{\lambda_{n}^{*} l_{3}^{n}}{\lambda_{n}} \to a$ as $n \to \infty$, the first and second limit in \eqref{eqn:proposition_Gaussian_proof_seventh} leads to $l_{2}^{n} \to a-1$. With this result, the third limit in this system shows that $a=2/3$. With these results, by dividing both the numerator and denominator of $A_{4}$ by $\lambda_{n}(\Delta \mu_{n})^{4}$, we quickly achieve the equation $1/24 - 5/72 = 0$, which is a contradiction. Therefore, Case 2.2.2 cannot hold.

In sum, not all the coefficients of $\dfrac{\partial^{|\beta |}{f}}{\partial{\mu^{\beta}}}(x|\mu_{n}^{*},v_{n}^{*})$ as $1 \leq |\beta | \leq 8$ go to 0. From here, by using the same argument as that of Proposition \ref{proposition:strong_identifiable_model_distinguishable} and Proposition \ref{proposition:strong_identifiable_model}, we achieve the result of part (b) of the proposition. As a consequence, we reach the conclusion of the theorem.
\end{proof}

\section{Proofs for Convergence Rates of Parameter Estimation and Minimax Lower Bounds}
\label{sec:rate_minimax_bounds}
In this appendix, we provide the proofs for the convergence rates of the MLE as well as the corresponding minimax lower bounds introduced in Section~\ref{sec:rate_minimax_bounds}.
\subsection{Proof of Theorem \ref{theorem:convergence_rate_distinguishable_first_order}}
\label{subsec:proof:theorem:convergence_rate_distinguishable_first_order}
(a) For any $G_{1}=G_{1}(\lambda_{1},\mu_{1},\Sigma_{1})$ and $G_{2}=G_{2}(\lambda_{2},\mu_{2},\Sigma_{2})$, we denote the following distance
\begin{eqnarray}
d_{1}(G_{1},G_{2}) & = & \lambda_{1}||(\mu_{1},\Sigma_{1})-(\mu_{2},\Sigma_{2})||, \nonumber \\
d_{2}(G_{1},G_{2}) & = & |\lambda_{1}-\lambda_{2}|^{2}. \nonumber
\end{eqnarray}
Even though $d_{2}(G_{1},G_{2})$ is a proper distance, it is clear that $d(G_{1},G_{2})$ is not symmetric and only satisfies a weak triangle inequality, i.e. we have
\begin{eqnarray}
d_{1}(G_{1},G_{3})+d_{1}(G_{2},G_{3}) \geq \min \left\{d_{1}(G_{1},G_{2}), d_{1}(G_{2},G_{1}) \right\}. \nonumber
\end{eqnarray}
Therefore, we will utilize the modification of Le Cam method for nonsymmetric loss in Lemma 6.1 of \cite{gadat2020parameter} to deal with such distance. We start with the following proposition
\begin{proposition} \label{proposition:minimax_strong_distinguishable}
Given that $f$ satisfies assumption (S.1) in Theorem \ref{theorem:convergence_rate_distinguishable_first_order}, we achieve for any $r<1$ that
\begin{itemize}
\item[(i)] $\lim \limits_{\epsilon \to 0} \inf \limits_{G_{1}=(\lambda,\mu_{1},\Sigma_{1}),G_{2}=(\lambda,\mu_{2},\Sigma_{2})}\left\{h(p_{G_{1}},p_{G_{2}})/d_{1}^{r}(G_{1},G_{2}):d_{1}(G_{1},G_{2}) \leq \epsilon \right\}=0$. 
\item[(ii)] $\lim \limits_{\epsilon \to 0} \inf \limits_{G_{1}=(\lambda_{1},\mu,\Sigma),G_{2}=(\lambda_{2},\mu,\Sigma)}\left\{h(p_{G_{1}},p_{G_{2}})/d_{2}^{r}(G_{1},G_{2}): d_{2}(G_{1},G_{2}) \leq \epsilon \right\}=0$. 
\end{itemize}
\end{proposition}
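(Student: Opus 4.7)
For both parts I would exhibit an explicit two-point family $(G_1, G_2)$ and upper bound $h(p_{G_1}, p_{G_2})$ via the inequality $2h^2(p, q) \le \int (p - q)^2/q\, dm$, combined with assumption (S.1). The constructed pair will respect the constraint $d_i(G_1, G_2) \le \epsilon$ and drive the ratio $h/d_i^r$ to zero as $\epsilon \to 0$; witnessing these upper bounds for the infimum yields the proposition. In each case the key is that the perturbation $p_{G_1} - p_{G_2}$ factors through a scalar prefactor times an $L^2$-regular ``kernel'' whose normalization against $p_{G_2}$ is finite thanks to (S.1).

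\textbf{Part (i).} Fix $\lambda \in (0,1)$ (say $\lambda = 1/2$) and $\Sigma \in \Omega$, and take $G_i = (\lambda, \mu_i, \Sigma)$. Write
\[ p_{G_1}(x) - p_{G_2}(x) = \lambda\, (\mu_1 - \mu_2)^\top \int_0^1 \partial_\mu f(x \mid (1-t)\mu_2 + t\mu_1, \Sigma)\, dt. \]
Squaring, dividing by $p_{G_2} \ge \lambda f(\cdot \mid \mu_2, \Sigma)$, applying Cauchy-Schwarz in $t$, and invoking (S.1) to bound $\int \|\partial_\mu f(\cdot \mid \mu', \Sigma)\|^2 / f(\cdot \mid \mu_2, \Sigma)\, dx$ uniformly over $\mu'$ close to $\mu_2$ give $h^2(p_{G_1}, p_{G_2}) \le C \lambda \|\mu_1 - \mu_2\|^2$. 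Since $d_1(G_1, G_2) = \lambda \|\mu_1 - \mu_2\|$, this yields
\[ \frac{h(p_{G_1}, p_{G_2})}{d_1^r(G_1, G_2)} \le C'\, \lambda^{1/2 - r} \|\mu_1 - \mu_2\|^{1 - r}, \]
which, with $\lambda$ pinned at a positive constant, vanishes as $\|\mu_1 - \mu_2\| \to 0$ for any $r < 1$; the constraint $d_1 \le \epsilon$ is met by choosing $\|\mu_1 - \mu_2\| = 2\epsilon$.

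\textbf{Part (ii).} Fix $\mu, \Sigma$ in the interior of $\Theta \times \Omega$ and take $G_i = (\lambda_i, \mu, \Sigma)$ with $\lambda_1, \lambda_2$ both in a compact subinterval of $(0, 1)$ (say near $1/2$). Then $p_{G_1} - p_{G_2} = (\lambda_1 - \lambda_2)(f(\cdot \mid \mu, \Sigma) - h_0)$; using $(f - h_0)^2 \le 2(f^2 + h_0^2)$ together with $p_{G_2} \ge \lambda_2 f$ and $p_{G_2} \ge (1-\lambda_2) h_0$ gives $\int (f - h_0)^2/p_{G_2}\, dx \le 2/\lambda_2 + 2/(1-\lambda_2) < \infty$, so $h(p_{G_1}, p_{G_2}) \le C |\lambda_1 - \lambda_2|$ and
\[ \frac{h(p_{G_1}, p_{G_2})}{d_2^r(G_1, G_2)} \le C'\, |\lambda_1 - \lambda_2|^{1 - 2r}. \]
The constraint $d_2 \le \epsilon$ forces $|\lambda_1 - \lambda_2| \le \sqrt{\epsilon} \to 0$, so the ratio vanishes in the limit.

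\textbf{Main obstacle.} Part (i) is clean because $\lambda$ is frozen at a positive constant and the $\|\mu_1 - \mu_2\|^{1-r}$ factor drives the limit for every $r < 1$. Part (ii) is more delicate: the direct bound above only vanishes for $r < 1/2$, so obtaining the full stated range $r < 1$ requires either the non-symmetric Le Cam packaging of Lemma 6.1 in \cite{gadat2020parameter}, which ties the two-point separation to a carefully matched Hellinger level, or a refined construction in which $(\mu, \Sigma)$ is simultaneously driven so that $\int (f - h_0)^2/p_{G_2}\, dx$ itself shrinks and thereby accelerates the decay of $h$ relative to $|\lambda_1 - \lambda_2|$. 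The calibration of this joint drift, compatible with (S.1) and with distinguishability of $f$ from $h_0$, is what I expect to be the main technical hurdle.
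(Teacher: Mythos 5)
Your part (i) is correct and is essentially the paper's own argument: bound $h^2$ by $\int (p_{G_1}-p_{G_2})^2/(\lambda f(\cdot\mid\mu_2,\Sigma_2))\,dx$, Taylor-expand the kernel difference to first order, and invoke (S.1). The paper lets $\lambda_n$ vary under the constraint $\lambda_n^{1-2r}\|(\mu_{1,n},\Sigma_{1,n})-(\mu_{2,n},\Sigma_{2,n})\|^{2-2r}\to0$; your choice of freezing $\lambda=1/2$ and moving only $\mu$ is a valid special case and suffices to drive the infimum to zero.

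For part (ii) your Hellinger bound $h(p_{G_1},p_{G_2})\le C|\lambda_1-\lambda_2|$ (with $\lambda_i$ bounded away from $0,1$) is correct and is also the paper's bound, but the ``obstacle'' you flag is spurious: you have plugged in the paper's displayed $d_2(G_1,G_2)=|\lambda_1-\lambda_2|^2$, which is a typo. The surrounding sentence calls $d_2$ ``a proper distance'' (the squared version violates the triangle inequality), the paper's own computation in this very proof reads $h^2/d_2^{2r}\lesssim|\lambda_1-\lambda_2|^{2-2r}/((1-\lambda_1)\wedge\lambda_1)$ — an identity that only makes sense with $d_2=|\lambda_1-\lambda_2|$ — and the downstream Le Cam step in Theorem~\ref{theorem:convergence_rate_distinguishable_first_order}(a) (loss $|\widehat\lambda_n-\lambda|^2=d_2^2$, separation $\epsilon^2\asymp n^{-1/r}$) likewise requires the unsquared $d_2$. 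With $d_2=|\lambda_1-\lambda_2|$ your bound gives $h/d_2^r\lesssim|\lambda_1-\lambda_2|^{1-r}\to0$ for every $r<1$, and you are done. Your two proposed rescue routes would not work in the literal-$d_2$ reading either: Lemma 6.1 of \cite{gadat2020parameter} is used to deduce the minimax bound \emph{from} the proposition and cannot change the Hellinger ratio in it, while the joint drift making $\int(f-h_0)^2/p_{G_2}\,dx$ shrink is excluded by the standing distinguishability hypothesis (first-order distinguishability forces $f(\cdot\mid\mu,\Sigma)\neq h_0$ for every $(\mu,\Sigma)$, and by compactness this integral is bounded below). Indeed with $d_2=|\lambda_1-\lambda_2|^2$ part (ii) is provably false for $r\geq1/2$: Theorem~\ref{theorem:strong_identifiable_model_distinguishable} gives $h(p_{G_1},p_{G_2})\gtrsim V(p_{G_1},p_{G_2})\gtrsim|\lambda_1-\lambda_2|$, hence $h/d_2^r\gtrsim|\lambda_1-\lambda_2|^{1-2r}\to\infty$ when $|\lambda_1-\lambda_2|\to0$ and $r>1/2$. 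So there is no missing construction to find; it is the definition of $d_2$ that should be corrected.
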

\begin{proof}
(i) For any sequences $G_{1,n}=(\lambda_{n},\mu_{1,n},\Sigma_{1,n})$ and $G_{2,n}=(\lambda_{n},\mu_{2,n},\Sigma_{2,n})$, we have
\begin{eqnarray}
h^{2}(p_{G_{1,n}},p_{G_{2,n}}) & \leq & \dfrac{1}{\lambda_{n}}\int \dfrac{(p_{G_{1,n}}(x)-p_{G_{2,n}}(x))^{2}}{f(x|\mu_{2,n},\Sigma_{2,n})}dx \nonumber \\
& = & \lambda_{n}\int \dfrac{(f(x|\mu_{1,n},\Sigma_{1,n})-f(x|\mu_{2,n},\Sigma_{2,n}))^{2}}{f(x|\mu_{2,n},\Sigma_{2,n})}dx \nonumber
\end{eqnarray}
where the first inequality is due to $\sqrt{p_{G_{1,n}}(x)}+\sqrt{p_{G_{2,n}}(x)} > \sqrt{\lambda_{n}f(x|\mu_{2,n},\Sigma_{2,n})}$. By Taylor expansion up to the first order, we have
\begin{eqnarray}
f(x|\mu_{1,n},\Sigma_{1,n})-f(x|\mu_{2,n},\Sigma_{2,n})=\sum \limits_{|\alpha|=1}{\dfrac{(\mu_{1,n}-\mu_{2,n})^{\alpha_{1}}(\Sigma_{1,n}-\Sigma_{2,n})^{\alpha_{2}}}{\alpha_{1}!\alpha_{2}!}\dfrac{\partial{f}}{\partial{\mu^{\alpha_{1}}}\partial{\Sigma^{\alpha_{2}}}}(x|\mu_{2,n},\Sigma_{2,n})} \nonumber \\
+\sum \limits_{|\alpha|=1}{\dfrac{(\mu_{1,n}-\mu_{2,n})^{\alpha_{1}}(\Sigma_{1,n}-\Sigma_{2,n})^{\alpha_{2}}}{\alpha_{1}!\alpha_{2}!}} \int \limits_{0}^{1}\dfrac{\partial{f}}{\partial{\mu^{\alpha_{1}}}\partial{\Sigma^{\alpha_{2}}}}(x|\mu_{2,n}+t(\mu_{1,n}-\mu_{2,n}),\Sigma_{2,n}+t(\Sigma_{1,n}-\Sigma_{2,n}))dt  \nonumber
\end{eqnarray} 
Now, by choosing $\lambda_{n}^{1-2r}\|(\mu_{1,n},\Sigma_{1,n})-(\mu_{2,n},\Sigma_{2,n})\|^{2-2r} \to 0$, and $\|(\mu_{1,n},\Sigma_{1,n}) -(\mu_{2,n},\Sigma_{2,n})\| \to 0$ and using condition (S.1), we can easily verify that $h(p_{G_{1,n}},p_{G_{2,n}})/d_{1}^{r}(G_{1,n},G_{2,n}) \to 0$. Therefore, we achieve the conclusion of part (i).

\noindent
(ii) The argument for this part is essentially similar to that in part (i). In fact, for any two sequences $G_{1,n}'=(\lambda_{1,n},\mu_{n},\Sigma_{n})$ and $G_{2,n}'=(\lambda_{2,n},\mu_{n},\Sigma_{n})$, we also obtain
\begin{eqnarray}
\dfrac{h^{2}(p_{G_{1,n}'},p_{G_{2,n}'})}{d_{2}^{2r}(G_{1,n}',G_{2,n}')} & \leq & \dfrac{(\lambda_{1,n}-\lambda_{2,n})^{2-2r}}{(1-\lambda_{1,n}) \wedge \lambda_{1,n}}\int \dfrac{(h_{0}(x|\mu_{0},\Sigma_{0})-f(x|\mu_{n},\Sigma_{n}))^{2}}{h_{0}(x|\mu_{0},\Sigma_{0})+f(x|\mu_{n},\Sigma_{n})}dx \nonumber \\
& \leq & \dfrac{2(\lambda_{1,n}-\lambda_{2,n})^{2-2r}}{(1-\lambda_{1,n}) \wedge \lambda_{1,n}} \nonumber
\end{eqnarray}
By choosing $(\lambda_{1,n}-\lambda_{2,n})^{2-2r}/\left\{(1-\lambda_{1,n}) \wedge \lambda_{1,n} \right\} \to 0$, we also achieve the conclusion of part (ii).
\end{proof}
Now, given $G_{*}=(\lambda^{*},\mu^{*},\Sigma^{*})$ and $r<1$. Let $C_{0}$ be any fixed constant. According to part (i) of Proposition \ref{proposition:minimax_strong_distinguishable}, for any sufficiently small $\epsilon>0$, there exists $G_{*}'=(\lambda^{*},\mu^{*}_{1},\Sigma^{*}_{1})$ such that $d_{1}(G_{*},G_{*}')=d_{1}(G_{*}',G_{*})=\epsilon$ and $h(p_{G_{*}},p_{G_{*}'}) \leq C_{0}\epsilon^{r}$. By means of Lemma 6.1 of \cite{gadat2020parameter}, we achieve
\begin{eqnarray}
\inf \limits_{\widehat{G}_{n} \in \Xi}\sup \limits_{G \in \Xi} \mathbb{E}_{p_{G}}\biggr(\lambda^{2}\|(\widehat{\mu}_{n},\widehat{\Sigma}_{n})-(\mu,\Sigma)\|^{2}\biggr) \geq \dfrac{\epsilon^{2}}{2}\biggr(1-V(p_{G_{*}}^{n},p_{G_{*}'}^{n})\biggr). \nonumber
\end{eqnarray}
where $p_{G_{*}}^{n}$ denotes the density of the $n$-iid sample $X_{1},\ldots,X_{n}$. From there,
\begin{eqnarray}
V(p_{G_{*}}^{n},p_{G_{*}'}^{n}) & \leq & h(p_{G_{*}}^{n},p_{G_{*}'}^{n}) \nonumber \\
& = & \sqrt{1-\left(1-h^{2}(p_{G_{*}},p_{G_{*}'})\right)^{n}} \nonumber \\
& \leq & \sqrt{1-(1-C_{0}^{2}\epsilon^{2r})^{n}}. \nonumber
\end{eqnarray}
Hence, we obtain
\begin{eqnarray}
\inf \limits_{\widehat{G}_{n} \in \Xi}\sup \limits_{G \in \Xi} \mathbb{E}_{p_{G}}\biggr(\lambda^{2}\|(\widehat{\mu}_{n},\widehat{\Sigma}_{n})-(\mu,\Sigma)\|^{2}\biggr) \geq \dfrac{\epsilon^{2}}{2}\sqrt{1-(1-C_{0}^{2}\epsilon^{2r})^{n}}. \nonumber
\end{eqnarray}
By choosing $\epsilon^{2r}=\dfrac{1}{C_{0}^{2}n}$, we achieve
\begin{eqnarray}
\inf \limits_{\widehat{G}_{n} \in \Xi}\sup \limits_{G \in \Xi} \mathbb{E}_{p_{G}}\biggr(\lambda^{2}\|(\widehat{\mu}_{n},\widehat{\Sigma}_{n})-(\mu,\Sigma)\|^{2}\biggr) \geq c_{1}n^{-1/r}. \nonumber
\end{eqnarray} 
for any $r<1$ where $c_{1}$ is some positive constant. Using the similar argument, with the result of (ii) in Proposition \ref{proposition:minimax_strong_distinguishable} we also immediately obtain the result $\inf \limits_{\widehat{G}_{n} \in \Xi} \sup \limits_{G \in \Xi} \mathbb{E}_{p_{G}}\biggr(|\widehat{\lambda}_{n}-\lambda|^{2}\biggr) \geq c_{2}n^{-1/r}$. As a consequence, we reach the conclusion of part (a) of the theorem.

\noindent
(b) The proof of this part is a direct consequence of Theorem \ref{theorem:strong_identifiable_model_distinguishable} and Theorem \ref{thm:density_estimation_rate}. Indeed, for $\widehat{G}_n = (\widehat{\lambda}_n, \widehat{\mu}_n, \widehat{\Sigma}_n)$ being the MLE as in equation~\eqref{eqn:MLE}, we have
\begin{align*}
    \mathbb{E}_{p_{G_{*}}} \left(|\hat{\lambda}_n - \lambda^*| + \lambda^* \|(\widehat{\mu}_n, \widehat{\Sigma}_n) - (\mu^*, \Sigma^*)\|\right) & \overset{\text{Thm}~\ref{theorem:strong_identifiable_model_distinguishable}}{\lesssim} \mathbb{E}_{p_{G_{*}}}V(p_{\widehat{G}_n}, p_{G_*}) \leq \mathbb{E}_{p_{G_{*}}}h(p_{\widehat{G}_n}, p_{G_*})\\
&\overset{\text{Thm}~\ref{thm:density_estimation_rate}}{\lesssim} \dfrac{\log n}{\sqrt{n}}
\end{align*}
Because all inequalities are uniform in $G_*$, we achieve the conclusion of part (b) of the theorem.
\subsection{Proof of Theorem~\ref{theorem:convergence_rate_not_distinguishable_strong_identifiable}}
\label{subsec:proof:theorem:convergence_rate_not_distinguishable_strong_identifiable}
(a) Similar to the proof argument of part (a) of Theorem \ref{theorem:convergence_rate_distinguishable_first_order}, we define
\begin{eqnarray}
d_{3}(G_{1},G_{2}) & = & \lambda_{1}\|(\Delta \mu_{1},\Delta \Sigma_{1})\|\|(\mu_{1},\Sigma_{1})-(\mu_{2},\Sigma_{2})\|, \nonumber \\
d_{4}(G_{1},G_{2}) & = & |\lambda_{1}-\lambda_{2}|\|(\Delta \mu_{1},\Delta \Sigma_{1})\|^{2}. \nonumber
\end{eqnarray}
for any $G_{1}=G_{1}(\lambda_{1},\mu_{1},\Sigma_{1})$ and $G_{2}=G_{2}(\lambda_{2},\mu_{2},\Sigma_{2})$. It is clear that both $d_{3}(G_{1},G_{2})$ and $d_{4}(G_{1},G_{2})$ still satisfy weak triangle inequality. To achieve the conclusion of this part, it suffices to demonstrate the following results
\begin{itemize}
\item[(i)] There exists two sequences $G_{1,n}=(\lambda_{n},\mu_{1,n},\Sigma_{1,n}) \in \Xi_{1}(l_{n})$ and $G_{2,n}=(\lambda_{n},\mu_{2,n},\Sigma_{2,n}) \in \Xi_{1}(l_{n})$ such that $d_{3}(G_{1,n},G_{2,n}) \to 0$ and $h(p_{G_{1,n}},p_{G_{2,n}})/d_{3}^{r}(G_{1,n},G_{2,n})$ as $n \to \infty$.
\item[(ii)] There exists two sequences $G_{1,n}'=(\lambda_{1,n},\mu_{n},\Sigma_{n}) \in \Xi_{1}(l_{n})$ and $G_{2,n}'=(\lambda_{2,n},\mu_{n},\Sigma_{n}) \in \Xi_{1}(l_{n})$ such that $d_{4}(G_{1,n},G_{2,n}) \to 0$ and $h(p_{G_{1,n}'},p_{G_{2,n}'})/d_{4}^{r}(G_{1,n},G_{2,n})$ as $n \to \infty$.
\end{itemize}
for any $r<1$. The proof argument for the above results can proceed in a similar fashion as that of Proposition \ref{proposition:minimax_strong_distinguishable}; therefore, it is omitted. We achieve the conclusion of part (a) of the theorem.

\noindent
(b) Combining the result of Theorem \ref{theorem:strong_identifiable_model_not_distinguishable} and the fact that $D(G,G_{*}) \asymp \overline{D}(G,G_{*})$ for any $G$ and $G_{*}$, we immediately achieve the following convergence rates
\begin{eqnarray}
\sup \limits_{G_{*} \in \Xi} \mathbb{E}_{p_{G*}}\biggr((\lambda^{*})^{2}\|(\Delta \mu^{*},\Delta \Sigma^{*})\|^{2}\|(\widehat{\mu}_{n},\widehat{\Sigma}_{n})-(\mu^{*},\Sigma^{*})\|^{2}\biggr) \lesssim \dfrac{\log^{2} n}{n}, \nonumber \\
\sup \limits_{G_{*} \in \Xi} \mathbb{E}_{p_{G*}}\biggr(\|(\Delta \widehat{\mu}_{n},\Delta \widehat{\Sigma}_{n}) \|^{2}\|(\Delta \mu^{*},\Delta \Sigma^{*}) \|^{2}|\widehat{\lambda}_{n}-\lambda^{*}|^{2} \biggr) \lesssim \dfrac{\log^{2} n}{n}. \label{eqn:proof_not_distinguishable_strong_identifiable_first}
\end{eqnarray}
It is clear that the second result in \eqref{eqn:proof_not_distinguishable_strong_identifiable_first} does not match with the second result in the conclusion of part (b) of the theorem. To circumvent this issue, we  utilize the fact that $G_{*} \in \Xi_{1}(l_{n})$. Indeed, notice that $(\widehat{\mu}_{n},\widehat{\Sigma}_{n})-(\mu^{*},\Sigma^{*}) = (\Delta\widehat{\mu}_{n},\Delta\widehat{\Sigma}_{n})-(\Delta\mu^{*}, \Delta\Sigma^{*})$, we have
\begin{eqnarray}\label{eqn:strongly-ident-consistent}
\sup \limits_{G_{*} \in \Xi} \dfrac{\mathbb{E}_{p_{G*}}\left\| (\Delta\widehat{\mu}_{n},\Delta\widehat{\Sigma}_{n}) - (\Delta\mu^{*}, \Delta\Sigma^{*})\right\|^{2}}{\|(\Delta\mu^{*}, \Delta\Sigma^{*})\|^{2}} \lesssim \dfrac{\log^{2} n}{n (\lambda^{*})^{2}\|(\Delta \mu^{*},\Delta \Sigma^{*})\|^{4}}   \to 0.
\end{eqnarray}
Hence, by the AM-GM inequality, we have
\begin{align}
    &  \mathbb{E}_{p_{G_*}} \|(\Delta \widehat{\mu}_n, \Delta \widehat{\Sigma}_n) \|^2(\widehat{\lambda}_n - \lambda^*)^2 \nonumber \\
    & \geq \dfrac{1}{2} \|(\Delta \mu^*, \Delta \Sigma^{*})\|^2 E_{p_{G_*}}(\widehat{\lambda}_n - \lambda^*)^2  - E_{p_{G_*}} \|(\Delta \widehat{\mu}_n, \Delta \widehat{\Sigma}_n) - (\Delta \mu^*, \Delta \Sigma^{*})\|^2 (\widehat{\lambda}_n - \lambda^*)^2 \nonumber \\
    & = \dfrac{1}{2} \|(\Delta \mu^*, \Delta \Sigma^{*})\|^2 \left(  E_{p_{G_*}}(\widehat{\lambda}_n - \lambda^*)^2 - \dfrac{E_{p_{G*}}\left\| (\Delta\widehat{\mu}_{n},\Delta\widehat{\Sigma}_{n}) - (\Delta\mu^{*}, \Delta\Sigma^{*})\right\|^{2}(\widehat{\lambda}_n - \lambda^*)^2}{\|(\Delta\mu^{*}, \Delta\Sigma^{*})\|^{2}} \right)  \nonumber\\
    & \gtrsim \|(\Delta \mu^*, \Delta \Sigma^{*})\|E_{p_{G_*}}(\widehat{\lambda}_n - \lambda^*)^2 ,
\end{align}
uniformly in $G_{*}$, where in the last inequality we use \eqref{eqn:strongly-ident-consistent} combining with the fact that $|\widehat{\lambda}_n - \lambda^*|$ is uniformly bounded by 2. Hence,
$$\mathbb{E}_{p_{G*}}\biggr(\|(\Delta \mu^{*},\Delta \Sigma^{*}) \|^{4}|\widehat{\lambda}_{n}-\lambda^{*}|^{2} \biggr)  \lesssim \mathbb{E}_{p_{G*}}\biggr(\|(\Delta \widehat{\mu}_{n},\Delta \widehat{\Sigma}_{n}) \|^{2}\|(\Delta \mu^{*},\Delta \Sigma^{*}) \|^{2}|\widehat{\lambda}_{n}-\lambda^{*}|^{2} \biggr) \lesssim \dfrac{\log^{2}(n)}{n}, $$
which is the conclusion of the theorem.
\subsection{Proof of Theorem~\ref{theorem:convergence_rate_Gaussian}}
\label{subsec:proof:theorem:convergence_rate_Gaussian}
(a) Similar to the proof argument of part (a) of Theorem \ref{theorem:convergence_rate_distinguishable_first_order}, we define
\begin{eqnarray}
d_{5}(G_{1},G_{2}) & = & \lambda_{1}\|(\mu_{1},\Sigma_{1})-(\mu_{2},\Sigma_{2})\|^{4}, \nonumber \\
d_{6}(G_{1},G_{2}) & = & |\lambda_{1}-\lambda_{2}|\|(\Delta \mu_{1},\Delta \Sigma_{1})\|^{4}. \nonumber
\end{eqnarray}
for any $G_{1}=G_{1}(\lambda_{1},\mu_{1},\Sigma_{1})$ and $G_{2}=G_{2}(\lambda_{2},\mu_{2},\Sigma_{2})$. It is clear that $d_{6}(G_{1},G_{2})$ satisfies weak triangle inequality while $d_{5}(G_{1},G_{2})$ no longer satisfies weak triangle inequality. In particular, we have
\begin{eqnarray}
d_{5}(G_{1},G_{3})+d_{5}(G_{2},G_{3}) \geq \dfrac{\min \left\{d_{5}(G_{1},G_{2}),d_{5}(G_{2},G_{1})\right\}}{8}. \nonumber
\end{eqnarray}
A close investigation of Lemma 6.1 of \cite{gadat2020parameter} reveals that modified Le Cam method still works under this setting of $d_{5}$ metric. More specifically, for any $\epsilon>0$ the following holds
\begin{eqnarray}
\inf \limits_{\widehat{G}_{n} \in \Xi} \sup \limits_{G \in \Xi_{2}(l_{n})} \mathbb{E}_{p_{G}}\biggr(d_{5}^{2}(G,\widehat{G}_{n})\biggr) \geq \dfrac{\epsilon^{2}}{128}\biggr\{1-V(p_{G_{1}}^{n},p_{G_{2}}^{n})\biggr\} \nonumber
\end{eqnarray}
where $G_{1}, G_{2} \in \Xi_{2}(l_{n})$ such that $d_{5}(G_{1},G_{2}) \wedge d_{5}(G_{1},G_{2}) \geq \epsilon/4$. From here, to achieve the conclusion of part (a), it suffices to demonstrate for any $r<1$ that
\begin{itemize}
\item[(i)] There exists two sequences $G_{1,n}=(\lambda_{n},\mu_{1,n},\Sigma_{1,n}) \in \Xi_{2}(l_{n})$ and $G_{2,n}=(\lambda_{n},\mu_{2,n},\Sigma_{2,n}) \in \Xi_{1}(l_{n})$ such that $d_{5}(G_{1,n},G_{2,n}) \to 0$ and $h(p_{G_{1,n}},p_{G_{2,n}})/d_{5}^{r}(G_{1,n},G_{2,n})$ as $n \to \infty$.
\item[(ii)] There exists two sequences $G_{1,n}'=(\lambda_{1,n},\mu_{n},\Sigma_{n}) \in \Xi_{2}(l_{n})$ and $G_{2,n}'=(\lambda_{2,n},\mu_{n},\Sigma_{n}) \in \Xi_{1}(l_{n})$ such that $d_{6}(G_{1,n},G_{2,n}) \to 0$ and $h(p_{G_{1,n}'},p_{G_{2,n}'})/d_{6}^{r}(G_{1,n},G_{2,n})$ as $n \to \infty$.
\end{itemize}
Following the proof argument of Proposition \ref{proposition:minimax_strong_distinguishable}, we can quickly verify the above results. As a consequence, we reach the conclusion of part (a) of the theorem.

\noindent
(b) From the discussion after Theorem~\ref{theorem:strong_identifiable_model_not_distinguishable}, we can show that:
$$\mathcal{Q}(G, G_{*}) \asymp |\lambda - \lambda^*| (\|\Delta \mu \|^2 \|\Delta \Sigma \|)(\|\Delta \mu^* \|^2 \|\Delta \Sigma^* \|) 
+ (\|\mu - \mu^{*}\|^2 + \|\Sigma - \Sigma^*\|) \|(\lambda(\|\Delta \mu \|^2 + \|\Delta \Sigma \|) + \lambda^* (\|\Delta \mu^* \|^2 + \|\Delta \Sigma^* \|)).$$
Hence, from Theorem~\ref{theorem:weakly_identifiable_Gaussian} combining with Theorem~\ref{thm:density_estimation_rate}, we have
\begin{eqnarray*}
    \sup_{G_*} \mathbb{E}_{p_{G_*}} (\lambda^*)^2(\|\widehat{\mu}_n - \mu^{*}\|^4 + \|\widehat{\Sigma}_n - \Sigma^*\|^2)  (\|\Delta \mu^* \|^4 + \|\Delta \Sigma^* \|^2) \lesssim \dfrac{\log^2(n)}{n} \\
    \sup_{G_*} \mathbb{E}_{p_{G_*}} |\widehat{\lambda}_n - \lambda^*|^2 (\|\Delta \widehat{\mu}_n \|^4 \|\Delta \widehat{\Sigma}_n \|^2)(\|\Delta \mu^* \|^4 \|\Delta \Sigma^* \|^2) \lesssim \dfrac{\log^2(n)}{n}.
\end{eqnarray*}
Similar to the proof of Theorem~\ref{theorem:convergence_rate_not_distinguishable_strong_identifiable} and with the definition of $\Xi_2(l_2)$, we have
\begin{eqnarray*}
    \mathbb{E}_{p_{G_*}}|\widehat{\lambda}_n - \lambda^*|^2 (\|\Delta \widehat{\mu}_n \|^4 \|\Delta \widehat{\Sigma}_n \|^2) \gtrsim (\|\Delta \mu^* \|^4 \|\Delta \Sigma^* \|^2) \mathbb{E}_{p_{G_*}}|\widehat{\lambda}_n - \lambda^*|^2 
\end{eqnarray*}
uniformly in $G_* \in \Xi_2(l_2)$. Hence,
$$\sup_{G_*\in \Xi_2(l_2)} \mathbb{E}_{p_{G_*}} |\widehat{\lambda}_n - \lambda^*|^2 (\|\Delta \mu^* \|^8 \|\Delta \Sigma^* \|^4) \lesssim \dfrac{\log^2(n)}{n}.$$
As a consequence, we obtain the conclusion of the theorem.
\section{Proofs for Auxiliary Results}
\label{sec:auxiliary_results}
\begin{lemma} \label{lemma:bound_Wasserstein_metric}
For any $r \geq 1$, we define 
\begin{eqnarray}
D_{r}(G,G_{*}) & = & \lambda \|(\Delta \mu,\Delta \Sigma)\|^{r} + \lambda^{*}\|(\Delta \mu^{*},\Delta \Sigma^{*})\|^{r} \nonumber \\
& - &  \min \left\{\lambda,\lambda^{*}\right\}\biggr(\|(\Delta \mu,\Delta \Sigma)\|^{r}+\|(\Delta \mu^{*},\Delta \Sigma^{*})\|^{r}-\|(\mu,\Sigma)-(\mu^{*},\Sigma^{*})\|^{r}\biggr)  \nonumber
\end{eqnarray}
for any $G$ and $G_{*}$. Then, we have $W_{r}^{r}(G,G_{*}) \asymp D_{r}(G,G_{*})$ for any $r \geq 1$ where $W_{r}$ is the $r$-th order Wasserstein distance. \nonumber
\end{lemma}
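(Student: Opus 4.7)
\medskip

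\noindent\textbf{Proof proposal.} The plan is to compute $W_r^r(G, G_*)$ explicitly by exploiting the discrete structure. Interpret
$G = (1-\lambda)\delta_{(\mu_0, \Sigma_0)} + \lambda\,\delta_{(\mu, \Sigma)}$ and
$G_* = (1-\lambda^*)\delta_{(\mu_0, \Sigma_0)} + \lambda^*\,\delta_{(\mu^*, \Sigma^*)}$
as two-atom measures sharing the common location $(\mu_0, \Sigma_0)$. By symmetry, I would assume $\lambda \leq \lambda^*$. Any transport plan between $G$ and $G_*$ is determined by a single scalar $t := \pi_{(\mu,\Sigma),(\mu^*,\Sigma^*)}$ lying in $\bigl[\max\{0, \lambda+\lambda^*-1\},\, \lambda\bigr]$; the other three entries are $1-\lambda-\lambda^*+t$, $\lambda^*-t$, $\lambda-t$. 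The associated cost is the affine function
\begin{align*}
C(t) \;=\; (\lambda^{*}-t)\|(\Delta \mu^{*},\Delta \Sigma^{*})\|^{r} + (\lambda-t)\|(\Delta \mu,\Delta \Sigma)\|^{r} + t\,\|(\mu,\Sigma)-(\mu^{*},\Sigma^{*})\|^{r},
\end{align*}
whose slope is $s := \|(\mu,\Sigma)-(\mu^{*},\Sigma^{*})\|^{r} - \|(\Delta \mu,\Delta \Sigma)\|^{r} - \|(\Delta \mu^{*},\Delta \Sigma^{*})\|^{r}$. Since $W_r^r$ is the minimum of $C$ over feasible $t$, the optimum is attained at one of the endpoints.

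\medskip

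\noindent The upper bound $W_r^r(G,G_*) \leq D_r(G,G_*)$ follows by plugging the always-feasible choice $t = \min\{\lambda, \lambda^*\} = \lambda$ into $C$: a direct rearrangement shows $C(\lambda) = D_r(G, G_*)$. For the matching lower bound $D_r(G,G_*) \lesssim W_r^r(G,G_*)$, I would split on the sign of $s$. When $s \leq 0$, the minimizer of $C$ is $t=\lambda$, so $W_r^r = D_r$ exactly. When $s > 0$, the minimizer is $t_* := \max\{0, \lambda+\lambda^*-1\}$; substituting $s>0$ back into the formula $C(t_*)$ gives, after cancellation, the bound
\begin{align*}
W_r^r(G,G_*) \;\geq\; \lambda\,\|(\Delta \mu,\Delta \Sigma)\|^{r} + \lambda^{*}\,\|(\Delta \mu^{*},\Delta \Sigma^{*})\|^{r}.
\end{align*}
On the other hand, the inequality $\|(\mu,\Sigma)-(\mu^{*},\Sigma^{*})\|^{r}\leq 2^{r-1}\bigl(\|(\Delta \mu,\Delta \Sigma)\|^{r}+\|(\Delta \mu^{*},\Delta \Sigma^{*})\|^{r}\bigr)$, obtained from the triangle inequality and convexity of $x \mapsto x^{r}$, lets me estimate $D_r(G,G_*) \leq 2^{r-1}\bigl(\lambda\|(\Delta \mu,\Delta \Sigma)\|^{r} + \lambda^{*}\|(\Delta \mu^{*},\Delta \Sigma^{*})\|^{r}\bigr)$, using $\lambda \leq \lambda^*$ to absorb cross terms. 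Combining the two estimates yields $D_r(G,G_*) \leq 2^{r-1} W_r^r(G,G_*)$.

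\medskip

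\noindent The main technical delicacy will be the boundary case $\lambda + \lambda^* > 1$, where the feasible interval for $t$ shrinks and the endpoint $t_* = \lambda+\lambda^*-1$ produces a four-term expression for $W_r^r$; here one must use $s > 0$ once more to eliminate the term carrying $\|(\mu,\Sigma)-(\mu^*,\Sigma^*)\|^r$ and reduce to the clean form above. All other steps reduce to linear algebra on the transport polytope and the elementary $2^{r-1}$-inequality, so the lemma follows with multiplicative constants $1$ and $2^{r-1}$, giving $W_r^r(G, G_*) \asymp D_r(G, G_*)$ as claimed.
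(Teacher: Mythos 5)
Your proposal is correct and takes essentially the same approach as the paper: both proofs compute $W_r^r$ explicitly on the two-atom transport polytope, split according to the sign of $s = \|(\mu,\Sigma)-(\mu^*,\Sigma^*)\|^r - \|(\Delta\mu,\Delta\Sigma)\|^r - \|(\Delta\mu^*,\Delta\Sigma^*)\|^r$ and the feasibility boundary $\lambda+\lambda^* \lessgtr 1$, and then relate the resulting expressions to $D_r$ via the triangle inequality together with the convexity bound $(a+b)^r \leq 2^{r-1}(a^r+b^r)$. Your framing of the case split as minimization of an affine cost $C(t)$ over the feasible interval is a slightly cleaner way to organize the paper's three-case computation, and it makes the upper bound $W_r^r \leq D_r = C(\lambda)$ immediate, but the mathematical content is the same.
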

\begin{proof}
Without loss of generality, we assume throughout the lemma that $\lambda<\lambda^{*}$. Therefore, we obtain from the formulation of $D_{r}(G,G_{*})$ that
\begin{eqnarray}
D_{r}(G,G_{*}) = (\lambda^{*}-\lambda)||(\Delta \mu^{*}, \Delta \Sigma^{*})||^{r}+\lambda ||(\mu,\Sigma)-(\mu^{*},\Sigma^{*})||^{r}. \nonumber
\end{eqnarray}
Direct computation of $W_{r}^{r}(G,G_{*}) $ yields three distinct cases:
\paragraph{Case 1:} If $||(\Delta \mu, \Delta \Sigma)||^{r}+||(\Delta \mu^{*},\Delta \Sigma^{*})||^{r} \geq ||(\mu,\Sigma)-(\mu^{*},\Sigma^{*})||^{r}$, then
\begin{eqnarray}
W_{r}^{r}(G,G_{*}) & = &  \lambda ||(\Delta \mu, \Delta \Sigma)||^{r}+\lambda^{*} ||(\Delta \mu^{*},\Delta \Sigma^{*})||^{r} \nonumber \\
& - & \min \left\{\lambda,\lambda^{*}\right\}(||(\Delta \mu, \Delta \Sigma)||^{r}+||(\Delta \mu^{*},\Delta \Sigma^{*})||^{r}-||(\mu,\Sigma)-(\mu^{*},\Sigma^{*})||^{r}) \nonumber \\
& = & D_{r}(G,G_{*}). \nonumber
\end{eqnarray}
\paragraph{Case 2:} If $||(\Delta \mu, \Delta \Sigma)||^{r}+||(\Delta \mu^{*},\Delta \Sigma^{*})||^{r} < ||(\mu,\Sigma)-(\mu^{*},\Sigma^{*})||^{r}$ and $\lambda+\lambda^{*} \leq 1$, then
\begin{eqnarray}
W_{r}^{r}(G,G_{*}) & = &  \lambda ||(\Delta \mu, \Delta \Sigma)||^{r}+\lambda^{*} ||(\Delta \mu^{*},\Delta \Sigma^{*})||^{r} \nonumber \\
& = & (\lambda^{*}-\lambda)||(\Delta \mu^{*},\Delta \Sigma^{*})||^{r}+\lambda (||(\Delta \mu, \Delta \Sigma)||^{r}+||(\Delta \mu^{*},\Delta \Sigma^{*})||^{r}). \nonumber
\end{eqnarray}
From Cauchy-Schartz's inequality, we have $||(\Delta \mu, \Delta \Sigma)||^{r}+||(\Delta \mu^{*},\Delta \Sigma^{*})||^{r} \gtrsim ||(\mu,\Sigma)-(\mu^{*},\Sigma^{*})||^{r}$. Therefore, under Case 2 we have $||(\Delta \mu, \Delta \Sigma)||^{r}+||(\Delta \mu^{*},\Delta \Sigma^{*})||^{r} \asymp ||(\mu,\Sigma)-(\mu^{*},\Sigma^{*})||^{r}$, which directly implies that $W_{r}^{r}(G,G_{*}) \asymp D_{r}(G,G_{*})$.
\paragraph{Case 3:} If $||(\Delta \mu, \Delta \Sigma)||^{r}+||(\Delta \mu^{*},\Delta \Sigma^{*})||^{r} < ||(\mu,\Sigma)-(\mu^{*},\Sigma^{*})||^{r}$ and $\lambda+\lambda^{*} > 1$, then
\begin{eqnarray}
W_{r}^{r}(G,G_{*}) & = &  (1-\lambda^{*})||(\Delta \mu, \Delta \Sigma)||^{r}+(1-\lambda)||(\Delta \mu^{*},\Delta \Sigma^{*})||^{r} \nonumber \\
& + & (\lambda+\lambda^{*}-1)||(\mu,\Sigma)-(\mu^{*},\Sigma^{*})||^{r} \nonumber \\
& = & (\lambda^{*}-\lambda)||(\Delta \mu^{*},\Delta \Sigma^{*})||^{r}+(1-\lambda^{*})(||(\Delta \mu, \Delta \Sigma)||^{r}+||(\Delta \mu^{*},\Delta \Sigma^{*})||^{r}) \nonumber \\
& + & (\lambda^{*}+\lambda-1)||(\mu,\Sigma)-(\mu^{*},\Sigma^{*})||^{r}.\nonumber
\end{eqnarray}
Since $||(\Delta \mu, \Delta \Sigma)||^{r}+||(\Delta \mu^{*},\Delta \Sigma^{*})||^{r} \asymp ||(\mu,\Sigma)-(\mu^{*},\Sigma^{*})||^{r}$, we achieve 
\begin{eqnarray}
(1-\lambda^{*})(||(\Delta \mu, \Delta \Sigma)||^{r} \asymp (1-\lambda^{*}) ||(\mu,\Sigma)-(\mu^{*},\Sigma^{*})||^{r}. \nonumber
\end{eqnarray}
Therefore, we also have $W_{r}^{r}(G,G_{*}) \asymp D_{r}(G,G_{*})$ under Case 3.

Combining the results from these cases, we reach the conclusion of the lemma. 
\end{proof}
\section{Discussion and Additional Experiments}
\label{Section:experiments}
\subsection{Parameter Changes with the Sample Size}\label{subsec:discussion}
In statistics and machine learning, researchers often want to know how many samples are enough to achieve some pre-specified $\epsilon$ error for the estimation of parameter $\theta$ in the fitted model. In the language of probability, we want to find an inequality such as $E\|\widehat{\theta}_n - \theta\| < C * rate(n)$, where $rate(n)$ is a decreasing sequence in $n$ and $C$ does not depend on $n$. Usually, in the parametric models, we have $rate(n) = 1/\sqrt{n}$, and therefore it takes $C^2/\epsilon^2$ samples to achieve average $\epsilon$ error in estimation. In complex models such as hierarchical models or the \modelname \, that we consider in this paper, difficulties arise because of the \emph{singularity} and \emph{identifiability} of the model. For example, in Eq.~\eqref{eqn:true_model}, if $\lambda^* = 0$, any pair of $(\mu^*, \Sigma^*)$ yields the same model. Hence, when $\lambda^* \approx 0$, it should be harder to estimate $(\mu^*, \Sigma^*)$, and researchers may need more samples to have an accurate estimation for them. Notably, we have shown, for example in Theorem~\ref{theorem:convergence_rate_distinguishable_first_order}, the precise dependence of the convergence rate of $(\mu^*, \Sigma^*)$ on the magnitude of $\lambda^*$. In particular, we have
$$\mathbb{E} |\widehat{\lambda}_n - \lambda^*| \leq C \dfrac{\log n}{\sqrt{n}},  \quad \mathbb{E} \lambda^* \norm{(\widehat{\mu}_n, \widehat{\Sigma}_n) - (\mu^*, \Sigma^*)} \leq C \dfrac{\log n}{\sqrt{n}},$$
where \textit{$C$ is a constant that does not depend on $\lambda^*, \mu^*, \Sigma^*$ and $n$}. Hence, one can have a good estimation (with error $\epsilon$) for $\lambda^*$ with $C^2 / \epsilon^2$ samples, while he needs $C^2 / (\epsilon * \lambda^*)^2$ samples to achieve such a good estimation for $(\mu^*, \Sigma^*)$. The simulation studies in the next section will make this clearer.

\subsection{Additional Experiments for the Distinguishable Settings}
We have seen in the main text that in two cases where $\lambda^*$ is either fixed or decreasing with rate $n^{-1/4}$, the convergence rate of $\lambda^*$ is $C \times n^{-1/2}$, where the constants $C$ is the same for both cases. The convergence rate for $(\mu^*, \Sigma^*)$ is $C\times n^{-1/4}$ for the latter case, which is much slower than the parametric rate in the former case. This phenomenon is quite rare for parametric models. We want to further bring readers' attention to two more extreme cases:
\begin{enumerate}
    \item $\lambda^* = 0.5 / n^{3/8}$ as $n$ increases;
    \item $\lambda^* = 0.5 / n^{1/2}$  as $n$ increases,
\end{enumerate}
where we consider the same $(\mu^*, \Sigma^*)$ with the experiments in the main text. The convergence rate for $(\lambda, \mu, \sigma^2)$ in both cases in the log domain can be seen in Figure~\ref{fig:distinguish_lambda_3_8} and Figure~\ref{fig:distinguish_lambda_1_2}. Hence, in all cases, the rate of convergence for $\lambda^*$ is always of order $n^{-1/2}$, meanwhile, the rate for $(\mu^*, \Sigma^*)$ becomes slower as $\lambda^*$ tends to 0 faster. From the theoretical result, when $\lambda^* = 0.5/n^{3/8}$, we expect the rate for $(\mu^*, \Sigma^*)$ to be of order $n^{-1/8}$, which is demonstrated in Figure~\ref{fig:distinguish_lambda_3_8}(b)\&(c)). At the extreme case $\lambda^* = 0.5/n^{1/2}$, it is even impossible to recover $(\mu^*, \Sigma^*)$ as $n\to \infty$ (cf. Figure~\ref{fig:distinguish_lambda_1_2}(b)\&(c)). This suggests practitioners collect more data when $\hat{\lambda}_n$ is small to have a good estimate for $(\mu^*, \Sigma^*)$. Finally, in the case $\hat{\lambda}_n$ is extremely small (of order $n^{-1/2}$), we suggest not to report the estimated values $(\widehat{\mu}_n, \widehat{\Sigma}_n)$, as they are highly uncertain.

\begin{figure}[t!]
      \centering
      \subcaptionbox*{\scriptsize (a) Rate of $\widehat{\lambda}_n$ \par}{\includegraphics[width = 0.32\textwidth]{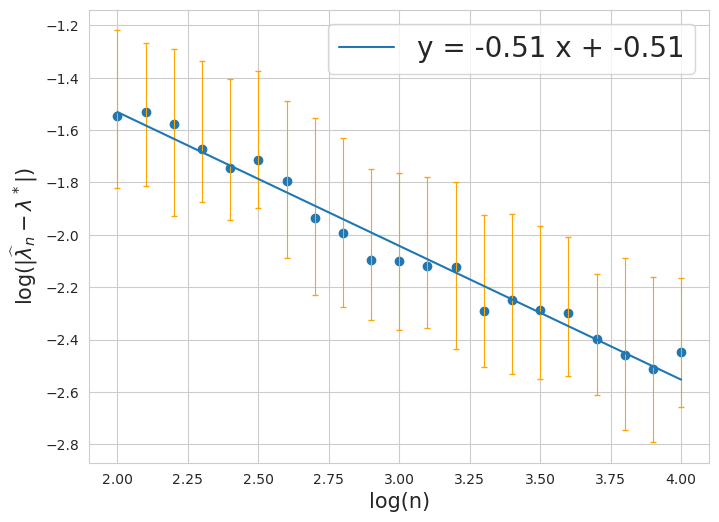}}
      \subcaptionbox*{\scriptsize (b) Rate of $\widehat{\mu}_n$ \par}{\includegraphics[width = 0.32\textwidth]{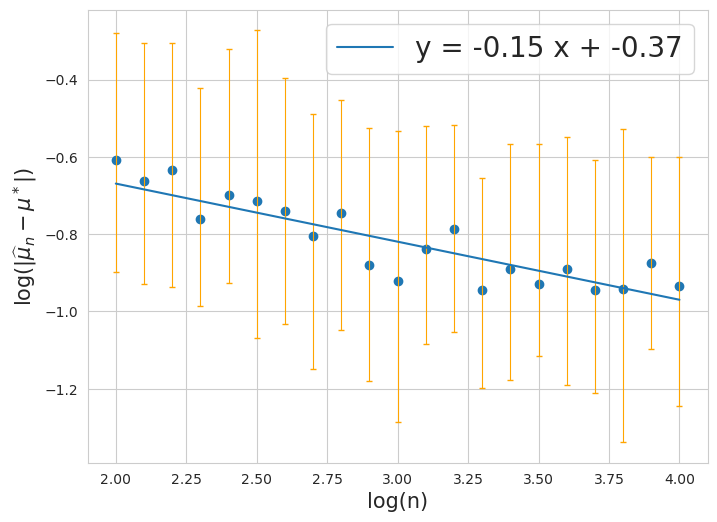}}
      \subcaptionbox*{\scriptsize (c) Rate of $\widehat{\sigma}^2_n$ \par}{\includegraphics[width = 0.32\textwidth]{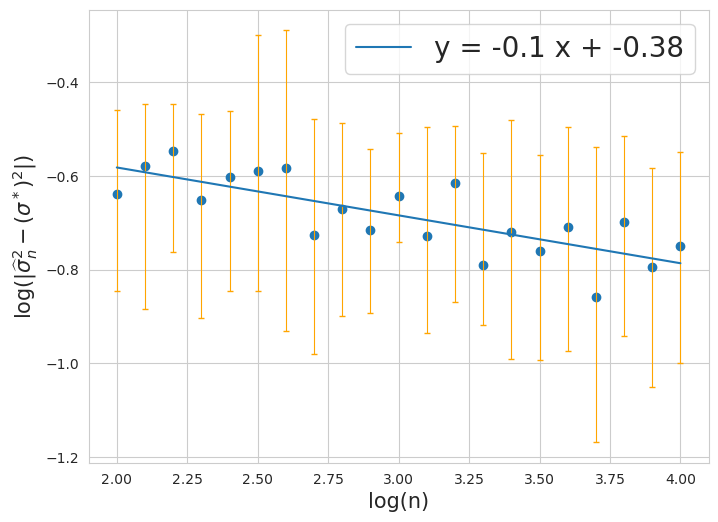}}
      \caption{Case  $\lambda^* = 0.5 / n^{3/8}$.}\label{fig:distinguish_lambda_3_8}
\end{figure}

\begin{figure}[t!]
      \centering
      \subcaptionbox*{\scriptsize (a) Rate of $\widehat{\lambda}_n$ \par}{\includegraphics[width = 0.32\textwidth]{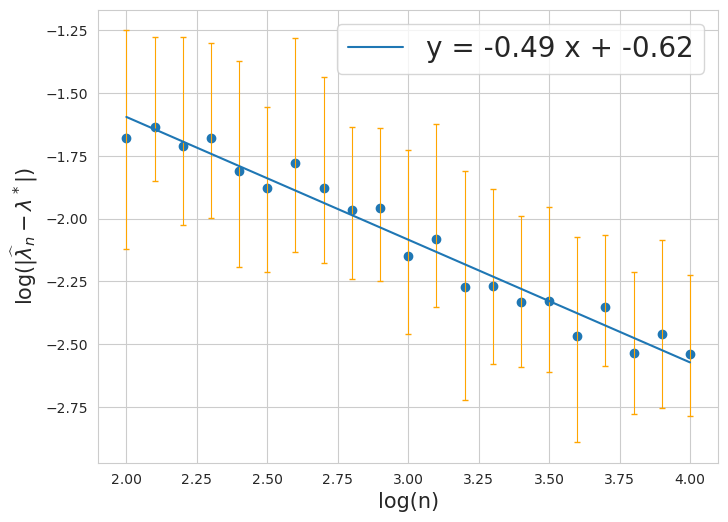}}
      \subcaptionbox*{\scriptsize (b) Rate of $\widehat{\mu}_n$ \par}{\includegraphics[width = 0.32\textwidth]{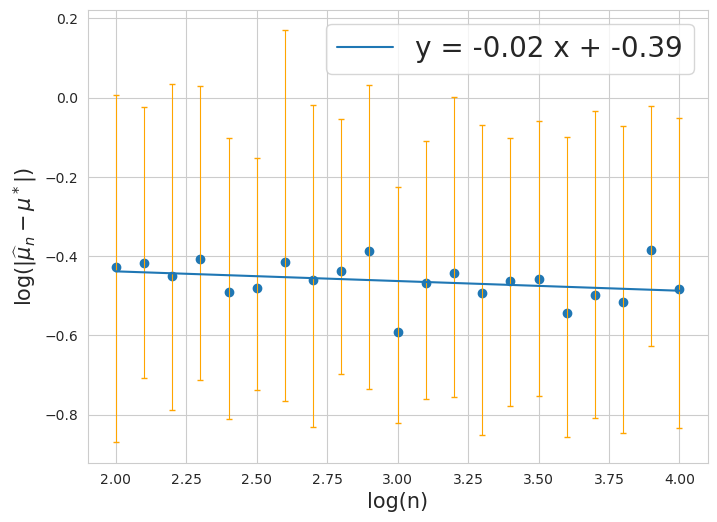}}
      \subcaptionbox*{\scriptsize (c) Rate of $\widehat{\sigma}^2_n$ \par}{\includegraphics[width = 0.32\textwidth]{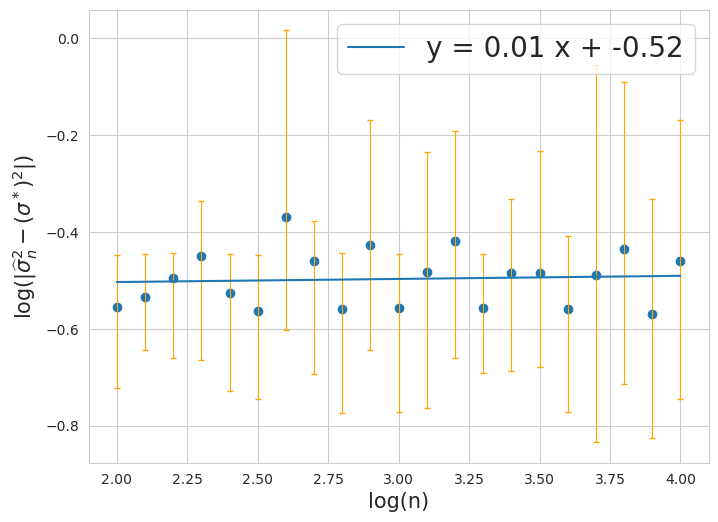}}
      \caption{Case $\lambda^* = 0.5 / n^{1/2}$.}\label{fig:distinguish_lambda_1_2}
\end{figure}

\subsection{Non-distinguishable Settings}
Finally, we consider the weakly identifiable and non-distinguishable setting here to demonstrate that the convergence rate for $\lambda^*$ can be slower than the parametric rate when $f$ is near $h_0$. Let both $h_0$ and $f$ belong to the location-scale Gaussian family. $h_0(x) = f(x|0, 1)$ and consider two cases of $(\lambda^*, \mu^*, \Sigma^*)$:
\begin{enumerate}
    \item $\lambda^* = 0.25, \mu^* = 0.$ are fixed and $\sigma^* = 1 + n^{-1/8}$ as $n$ increases;
    \item $\lambda^* = 0.25, \sigma^* = 1.$ are fixed and $\mu^* = n^{-1/8}$ as $n$ increases;
\end{enumerate}
Recall that we have proved that $\left\{\|\Delta \mu^{*}\|^{4}+\|\Delta \Sigma^{*}\|^{2} \right\}|\widehat{\lambda}_{n}-\lambda^{*}| = \mathcal{O}(n^{-1/2})$ and $\lambda^{*}(\|\Delta \mu^{*}\|^{2}+ \|\Delta \widehat{\mu}_{n}\|^{2} + \|\Delta \Sigma^{*}\| + \|\Delta \widehat{\Sigma}_{n}\|) (\|\widehat{\mu}_{n}-\mu^{*}\|^{2}+\|\widehat{\Sigma}_{n}- \Sigma^{*}\|) = \mathcal{O}(n^{-1/2})$, where there is a mismatch in the orders of convergence rates of the location and scale parameter. Notably, the rate of convergence for $\lambda^*$ also depends on the rate $\Delta \mu^*$ and $\Delta (\sigma^*)^2 \to 0$. The experiments do support this theoretical finding, where we have the rate for $\lambda^*$ is $\approx n^{-1/4}$ is the first case (as $\norm{\Delta (\sigma^*)^2}^2 = O(n^{-1/4})$) and it does not convergence in the second case $\lambda^*$ is $\approx n^{-1/4}$ is the first case (as $\norm{\Delta \mu^*}^4 = O(n^{-1/2})$). The mismatch rate for $\|\widehat{\mu}_n - \mu^*\|$ and $\|\widehat{\sigma}_n^2 - (\sigma^*)^2\|$ can also be seen clearly in the second case, where the rate for the scale parameter is still of the parametric rate, whereas it is slower for the mean.

\begin{figure}[t!]
      \centering
      \subcaptionbox*{\scriptsize (a) Rate of $\widehat{\lambda}_n$ \par}{\includegraphics[width = 0.32\textwidth]{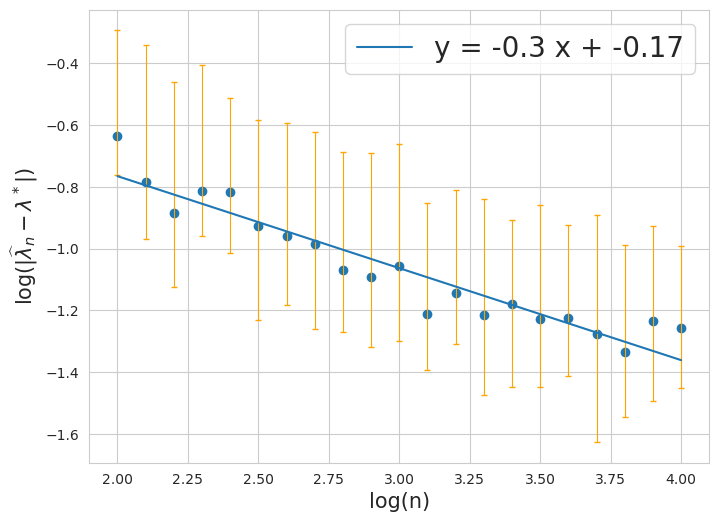}}
      \subcaptionbox*{\scriptsize (b) Rate of $\widehat{\mu}_n$ \par}{\includegraphics[width = 0.32\textwidth]{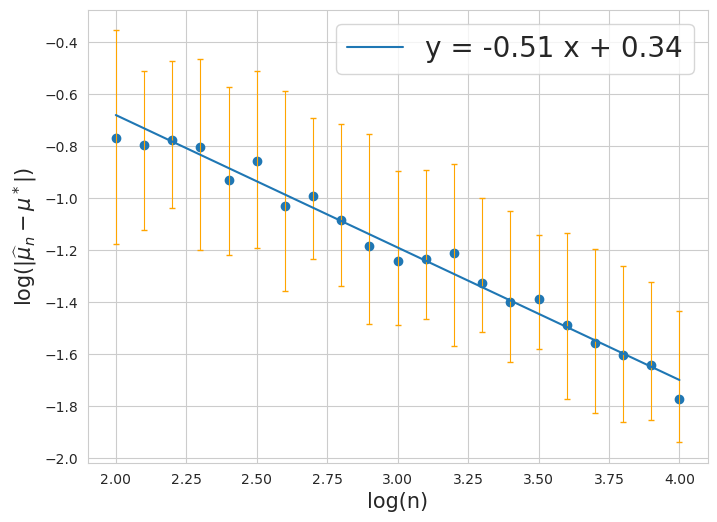}}
      \subcaptionbox*{\scriptsize (c) Rate of $\widehat{\sigma}^2_n$ \par}{\includegraphics[width = 0.32\textwidth]{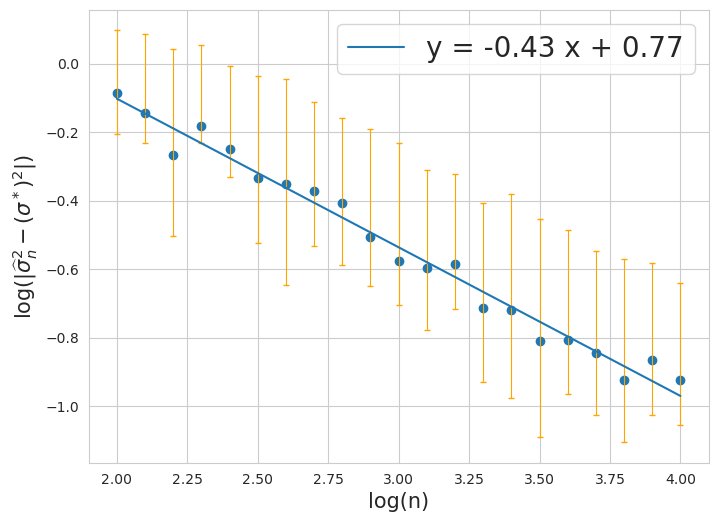}}
      \caption{Case 1: $\mu^* = \mu_0$ and $(\sigma^*)^{2}\to \sigma_n^{2}$ in the rate $n^{-1/8}$}\label{fig:non-disting-case1}
\end{figure}

\begin{figure}[t!]
      \centering
      \subcaptionbox*{\scriptsize (a) Rate of $\widehat{\lambda}_n$ \par}{\includegraphics[width = 0.32\textwidth]{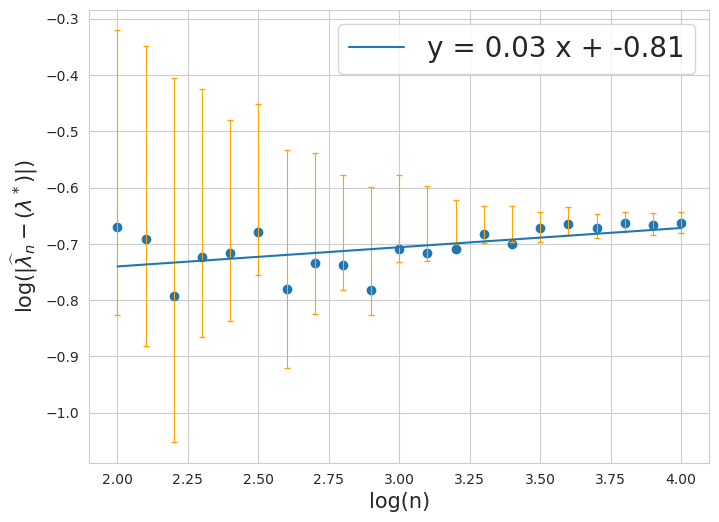}}
      \subcaptionbox*{\scriptsize (b) Rate of $\widehat{\mu}_n$ \par}{\includegraphics[width = 0.32\textwidth]{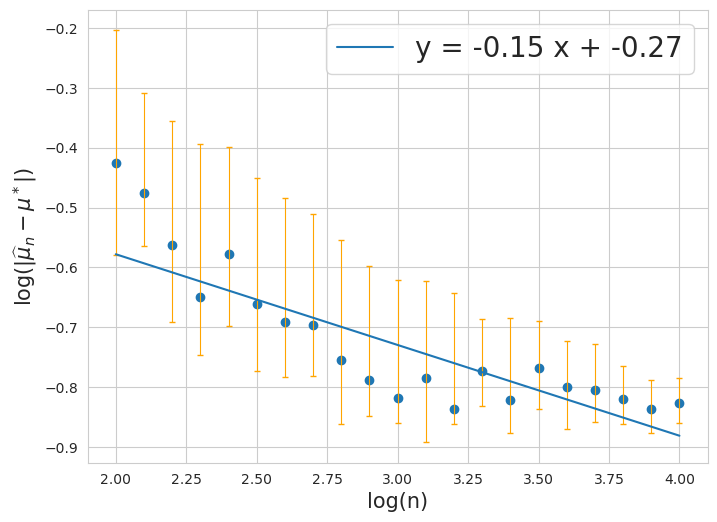}}
      \subcaptionbox*{\scriptsize (c) Rate of $\widehat{\sigma}^2_n$ \par}{\includegraphics[width = 0.32\textwidth]{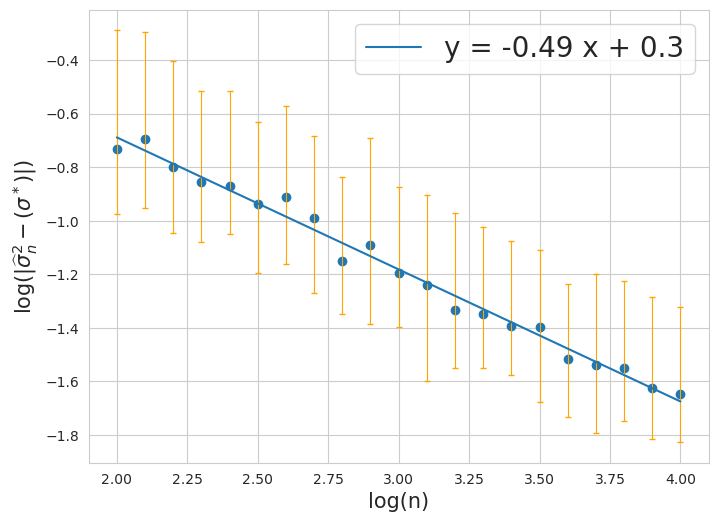}}
      \caption{Case 2: $\sigma^* = \sigma_0$ and $\mu^* \to \mu_0$ in the rate $n^{-1/8}$.}\label{fig:non-disting-case2}
\end{figure}

\end{document}